\documentclass[11pt,reqno]{amsart}
\usepackage{amssymb,amsmath,hyperref}

\setlength{\textwidth}{15.5cm}
\setlength{\oddsidemargin}{0.7cm}
\setlength{\evensidemargin}{0.7cm}

\numberwithin{equation}{section}
\newtheorem{theorem}{Theorem}[section]
\newtheorem{proposition}[theorem]{Proposition}
\newtheorem{lemma}[theorem]{Lemma}

\newtheorem{remark}[theorem]{Remark}
\newcommand{\px}{\partial_x}

\begin{document}
\title[ fifth order KdV equation ]
  { On the fifth order KdV equation: local well-posedness and lack of uniform continuity of the solution map}
\author[Soonsik Kwon ]
{Soonsik Kwon}

\address{Soonsik Kwon \hfill\break
Department of Mathematics, University of California, Los Angeles, CA
90095-1555, USA} \email{rhex2@math.ucla.edu}

\date{}
\thanks{}
\thanks{} \subjclass[2000]{35J53} \keywords{local
well-posedness; ill-posedness; KdV hierarchy }

\begin{abstract}
In this paper we prove that the following fifth order equation arising from the KdV hierarchy
\begin{equation*}
\begin{cases}
\partial_tu + \px^5u + c_1\px u\px^2u + c_2u\px^3u = 0  \qquad  u : \mathbb{R}_t\times\mathbb{R}_x \rightarrow \mathbb{R}\\
u(0,x) =u_0(x)    \qquad \qquad u_0 \in H^s(\mathbb{R})
\end{cases}
\end{equation*}
is locally well-posed in $ H^s(\mathbb{R}) $ for $ s> \frac{5}{2}$. Also, we prove the solution map of the equation is not uniformly continuous.
\end{abstract}

\maketitle

\section{Introduction}

The Korteweg de Vries(KdV) equation has a fascinating property called \emph{complete integrability} in the sense that there is a Lax pair formulation of equations (or a bi-Hamiltonian structure). As is well known, this generates a hierarchy of Hamiltonian equations of order $ 2j+1$ and the corresponding Hamiltonians. Due to bi-Hamiltonian structure the flow of each equation conserves every Hamiltonian in the hierarchy. In particular, the KdV equation has infinitely many conservation laws and so does \eqref{fifthkdv}. The followings are first a few equations and their Hamiltonians with respect to one of two Hamiltonian structures.
\begin{align}
\partial_t u - \px u =0 , \qquad  \qquad & \int\frac{1}{2}u^2 \nonumber \\
\partial_t u - \px^3 u -6u\px u =0 , \qquad \qquad  & \int -\frac{1}{2}(\px u)^2 + u^3  \nonumber \\
\label{fifthkdv}\partial_t u - \px^5 -30u^2\px u + 20\px u\px^2 u + 10u\px^3u=0 , \qquad \qquad & \int\frac{1}{2}\px^2u^2-5u\px u^2 +\frac{5}{2}u^4  \\
          \vdots  \qquad \qquad &  \vdots \nonumber
\end{align}

In this paper, we consider the initial value problem of the fifth order equation \eqref{fifthkdv} in the hierarchy. Though the theory of complete integrability yields the global existence for Schwartz initial data and the soliton resolution phenomena, the well-posedness problem for low regularity initial data (or in the non-integrable case) is a very different problem, requiring the theory of dispersive PDE. The following equation generalizes \eqref{fifthkdv} to non-integrable case.
\begin{equation} \label{fifth}
\begin{cases}
\partial_tu + \px^5u + c_1\px u\px^2u + c_2u\px^3u = 0\\
u(0,x) =u_0(x)
\end{cases}
\end{equation}
where $ u : \mathbb{R}_t\times\mathbb{R}_x \rightarrow \mathbb{R} $ and $c_1,c_2$ are real constants. \\
We note that \eqref{fifth} also models several higher order water wave physics (see, for instance, \cite{benjamin}, \cite{benney}, \cite{olver}).\\
We consider the local well-posedness problem and a bad behavior of the flow map with the initial data in Sobolev space $H^s(\mathbb{R})$. Our first result is the local well-posedness for $ s>\frac{5}{2} $, as stated in the following theorem.
\begin{theorem}\label{lwp}
Let $s > \frac{5}{2} $. For any $u_0 \in H^s(\mathbb{R})$, there exists a time $ T \gtrsim \|u_0\|_{H^s}^{-\frac{10}{3}}$ and
a unique solution $u$ for the fifth order KdV equation $\eqref{fifth}$ satisfying
$$  u \in C([0,T], H^s(\mathbb{R})) \qquad \partial_x^3u \in L^1([0,T],L^\infty(\mathbb{R})). $$
Moreover, for any $R>0$, the solution map $u_0 \mapsto u(t) $ is continuous from the ball $ \{u_0 \in H^s(\mathbb{R}) :
\|u_0\|_{H^s} < R \}$  to $ C([0,T], H^s(\mathbb{R})) $. \\
\end{theorem}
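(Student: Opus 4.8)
The plan is to treat \eqref{fifth} as a quasilinear dispersive equation: the nonlinearity $c_1\px u\,\px^2u+c_2u\,\px^3u$ loses, in effect, one derivative against the $H^s$ energy, while the fifth order dispersion $\px^5$ recovers two derivatives through Kato's local smoothing effect. I would first record the linear estimates for the free group $W(t)=e^{-t\px^5}$: the unitarity identity $\|W(t)u_0\|_{C_TH^s}=\|u_0\|_{H^s}$; the sharp Kato local smoothing estimate $\|\px^2W(t)u_0\|_{L^\infty_xL^2_T}\lesssim\|u_0\|_{L^2}$, together with its dual and its inhomogeneous (Duhamel) form; maximal-function estimates, e.g.\ $\|\px W(t)u_0\|_{L^2_xL^\infty_T}\lesssim\|u_0\|_{H^s}$ and — the crucial one — $\|\px^3W(t)u_0\|_{L^1_TL^\infty_x}\lesssim T^{\theta}\|u_0\|_{H^s}$ for $s>\tfrac52$ and some $\theta>0$; and a Strichartz-type estimate. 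I would then fix a resolution space $X^s_T$ whose norm combines $\|u\|_{C_TH^s}$, the smoothing norm $\|\px^{s+2}u\|_{L^\infty_xL^2_T}$, a low-order maximal norm such as $\|\px u\|_{L^2_xL^\infty_T}$, and the auxiliary quantity $\|\px^3u\|_{L^1_TL^\infty_x}$ from the statement; the above estimates show the free solution lies in $X^s_T$ with norm $\lesssim\|u_0\|_{H^s}$.

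The core of the proof is an a priori bound in $X^s_T$. It is convenient to put the nonlinearity in conservation form, $c_1\px u\,\px^2u+c_2u\,\px^3u=\px\bigl(c_2u\,\px^2u+\tfrac{c_1-c_2}{2}(\px u)^2\bigr)$. Applying $\px^s$ and running the $L^2$ energy estimate for $v=\px^su$, the $\px^5$ term drops by antisymmetry, and the only genuinely dangerous contribution is the top order term $\sim\int u_x(\px^{s+1}u)^2\,dx$ (with a coefficient depending on $c_1,c_2,s$), which is not controlled by $\|u\|_{H^s}$. The key maneuver is to integrate it by parts once in $x$: writing $(\px^{s+1}u)^2=\px^{s+1}u\cdot\px(\px^su)$ produces, on one hand, $\sim\int\px^3u\,(\px^su)^2\,dx$, bounded by $\|\px^3u\|_{L^\infty_x}\|u\|_{H^s}^2$ and hence integrable in $t$ thanks to the auxiliary norm, and, on the other hand, a term merely \emph{linear} in the top derivative, $\sim\int u_x\,\px^{s+2}u\,\px^su\,dx$, which after integrating over $[0,T]$ is bounded by $\|\px^{s+2}u\|_{L^\infty_xL^2_T}\|\px u\|_{L^2_xL^\infty_T}\|\px^su\|_{L^2_{x,T}}$, using the smoothing and maximal norms. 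All remaining Leibniz/commutator terms carry two $L^2$ factors of order $\le s$ and one factor $\px^ju$ in $L^\infty_x$ with $j\le 2$ (or, for $j=3$, only through its $L^1_T$ norm); the worst of these forces $s>\tfrac52$ via $H^{1/2+}(\mathbb R)\hookrightarrow L^\infty(\mathbb R)$. A Gronwall argument then gives $\|u\|_{C_TH^s}\lesssim\|u_0\|_{H^s}\exp\bigl(C\|\px^3u\|_{L^1_TL^\infty_x}+\cdots\bigr)$; feeding back the bounds on the remaining $X^s_T$ norms (in particular $\|\px^3u\|_{L^1_TL^\infty_x}\lesssim T^{3/10}\|u\|_{X^s_T}$ up to nonlinear corrections) closes the bootstrap on a time interval with $T\gtrsim\|u_0\|_{H^s}^{-10/3}$.

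With the a priori estimate in hand, existence follows by the standard quasilinear route. Regularize \eqref{fifth} by adding $\varepsilon\px^4u$ to the left-hand side (a dissipative perturbation of favorable sign, whose contribution $-\varepsilon\|\px^{s+2}u^\varepsilon\|_{L^2}^2$ to the energy identity only helps); for each $\varepsilon>0$ the regularized equation is semilinear parabolic and is solved by Picard, and since the a priori bound is uniform in $\varepsilon$ the lifespan $T$ and the $X^s_T$ norm are $\varepsilon$-independent. Passing $\varepsilon\to 0$ and using Aubin--Lions compactness (strong convergence in $C_TH^{s'}_{loc}$ for $s'<s$) one identifies a limit solving \eqref{fifth}, lying in $C_TH^s$ and, via the equation, in $X^s_T$. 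For uniqueness, the difference $w=u_1-u_2$ of two solutions with the same data solves a linear fifth order equation with coefficients built from $u_1,u_2$, and the same integration-by-parts identity — now requiring only $L^\infty$ control of $\le 3$ derivatives of the $u_i$, all available in $X^s_T$ — gives an $L^2$ energy inequality forcing $w\equiv 0$. Finally, continuous dependence of $u_0\mapsto u(t)$ on balls of $H^s$ is obtained by the Bona--Smith method: mollify the data, $u_0^\delta\to u_0$ in $H^s$, use the difference estimate to get convergence of the corresponding solutions in $C_TL^2$, and combine the uniform $X^s_T$ bounds with a low/high frequency decomposition and an $\varepsilon/3$ argument to upgrade this to convergence in $C_TH^s$.

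The main obstacle is exactly the one-derivative loss in the quasilinear term $u\px^3u$, which rules out both a naive Picard iteration and a plain $H^s$ energy estimate; the whole scheme turns on the single integration by parts above, which trades the uncontrollable quadratic $\int u_x(\px^{s+1}u)^2$ for a piece absorbed by the auxiliary norm $\|\px^3u\|_{L^1_TL^\infty_x}$ plus a piece that is only linear in $\px^{s+2}u$ and hence controlled by Kato smoothing. Making this quantitative — verifying that the inhomogeneous smoothing and maximal estimates survive the variable-coefficient Duhamel perturbation and return the auxiliary norm with a genuine positive power of $T$ (the source of the exponent $-\tfrac{10}{3}$), and then recovering continuous dependence, which does not come from a contraction, via Bona--Smith — is where the technical work concentrates.
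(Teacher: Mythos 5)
Your architecture (energy estimate plus smoothing/maximal norms plus Bona--Smith) is the right one, and you have correctly isolated the enemy: the top-order term $\int \px u\,(\px^{s+1}u)^2$. But your mechanism for killing it is different from the paper's and, as described, has a circularity. You integrate by parts to trade it for $\int \px u\,\px^{s+2}u\,\px^{s}u$, which you then close using the smoothing norm $\|\px^{s+2}u\|_{L^\infty_xL^2_T}$ of the \emph{nonlinear} solution. That smoothing bound, however, is not a consequence of the linear Duhamel estimate: the inhomogeneous Kato smoothing applied to $F=u\px^3u$ at order $s+2$ demands $s+1$ derivatives of $u$ in a norm not controlled by $C_TH^s$, and if you instead prove it by Kato's weighted identity ($\frac{d}{dt}\int (D^su)^2\phi$), the very same uncontrollable quadratic term $\int \px u\,(D^{s+1}u)^2\phi$ reappears, now with the non-localized weight $\phi$ rather than $\phi'$. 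The paper breaks this loop differently: it adds a cubic correction $a_s\int u\,(D^{s-2}\px u)^2$ to the energy (and the analogous weighted correction inside the local smoothing lemma) chosen so that its time derivative cancels the bad term \emph{exactly}, leaving only contributions bounded by $\|\px^3u\|_{L^\infty}\|u\|_{H^s}^2$. This also requires, for fractional $s$, a second-order generalized Kato--Ponce commutator expansion (Lemma~\ref{commutator}), which your sketch does not supply.

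A second gap is the auxiliary norm $\|\px^3u\|_{L^1_TL^\infty_x}$. You invoke it as a linear maximal/Strichartz estimate with a power of $T$, but the plain Strichartz estimate for $e^{-t\px^5}$ gains only $3/4$ of a derivative, and the threshold $s>\tfrac52$ is produced by the \emph{refined} Strichartz estimate (chopping time into intervals of length $N^{-\alpha}$ per frequency block, with $\alpha=1$), whose inhomogeneous part $\|J^{s-1}(u\px^3u)\|_{L^2_{T,x}}$ again feeds back into the nonlinear local smoothing estimate paired with the $\ell^2_j$ maximal bound $\bigl(\sum_j\|u\|^2_{L^\infty([0,T]\times[j,j+1])}\bigr)^{1/2}$; the whole system is then closed by a continuity argument for small data. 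Relatedly, the lifespan $T\gtrsim\|u_0\|_{H^s}^{-10/3}$ in the paper comes from scaling ($u_\lambda=\lambda^{-2}u(t/\lambda^5,x/\lambda)$ downsizes the data, and $T=1$ for the rescaled problem), not from a $T^{3/10}$ gain in the linear estimate, which you assert but do not justify. Your parabolic regularization $\varepsilon\px^4u$ and the Bona--Smith endgame are fine as stated, but the proof does not close until the modified-energy cancellation (or an equivalent device) is put in at both places where the quadratic top-order term arises.
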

Unlike the KdV equation, the local well-posedness problem cannot be solved by the contraction principle if we assume the initial data is in $H^s$. (In \cite{KPV94},\cite{KPV94-1} Kenig, Ponce and Vega proved the local well-posedness for a general dispersive equation with well-decaying initial data $ u_0 \in H^s(\mathbb{R})\cap L^2(|x|^mdx)$ for some large $s,m>0$ using the contraction principle.)\\
The following linear local smoothing is due to Kenig, Ponce and Vega \cite{KPV91}.
$$  \| \px^j e^{-t\px^{2j+1}}u_0(x) \|_{L^\infty_xL^2_t} \lesssim  \|u_0\|_{L^2_x}.   $$
One can observe the smoothing effect recovering \emph{two} derivatives ($j=2$) is not enough for the nonlinear term \emph{$u\px^3u$}. The fact that the nonlinear term has more derivatives than can be recovered by the smoothing effect causes a strong interaction between low and high frequencies data. This type of phenomenon is observed earlier in other dispersive equations, such as the Benjamin-Ono(BO) equation and the Kadomtsev-Petviashvili-I(KP-I) equation. In \cite{MST}, \cite{MST2} Molinet, Saut and Tzvetkov showed that the solution maps of these two equations are not $C^2 $ using examples localized in low and very high frequency. This already implies that Picard iteration is not available, since if it were the solution map would be real-analytic. Furthermore, in \cite{koch-tzvetkov2}, \cite{koch-tzvetkov3} Koch and Tzvetkov proved that the solution maps of these two equations are not uniformly continuous using the same low and high frequency nonlinear interaction. Our second result is an analog of theirs to the equation \eqref{fifth}.
\begin{theorem}\label{ill-posed}
Let $s>\frac{5}{2}$. Then, there exist constants $c$, $C$ and two sequences $(u_n)$
and $(v_n)$ of solution of the fifth order equation \eqref{fifth}  such that for every $t \in [0,1]$,
$$ \sup_n \|u_n(t,\cdot)\|_{H^s_x} + \sup_n
\|v_n(t,\cdot)\|_{H^s_x} \leq C, $$
$(u_n)$ and $(v_n)$
satisfy initially
$$ \lim_{n\rightarrow \infty} \|u_n(0,\cdot) - v_n(0,\cdot)
\|_{H^s_x} = 0, $$
But, for every $t \in [0,1]$,
$$ \liminf_{n\rightarrow \infty} \| u_n(t,\cdot) - v_n(t,\cdot) \|_{H^s_x}
\geq c |t|. $$
Moreover, if the equation \eqref{fifth} satisfies that for all $ t$,
\begin{itemize}
\item the $L^2$ conservation law in the sense that  $\|u(t)\|_{L^2_x} =\|u(0)\|_{L^2} $
\item an $H^3$ conservation law in the sense that
$$  \|u(t)\|_{H^3_x}  \lesssim \|u(0)\|_{H^3_x}  \qquad \text{for small}\quad \|u(0)\|_{H^3_x},   $$
\end{itemize}
then the same conclusion holds true for $ s>0$.\\
Note that the fifth order KdV equation \eqref{fifthkdv} satisfies these conditions. \\
\end{theorem}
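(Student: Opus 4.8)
The plan is to adapt the Koch--Tzvetkov scheme: construct two families of \emph{approximate} solutions in which one very high Fourier mode rides on a slowly varying, nearly constant low-frequency ``background'', so that (taking $c_2\neq0$) the term $c_2u\px^3u$ forces the high mode to acquire a phase shift that is linear in $t$ and proportional to the background amplitude; then produce genuine solutions with the same data, show they remain close to the approximate ones, and conclude. For $s>\tfrac52$ the closeness comes from Theorem~\ref{lwp}; for $s>0$ it comes from the two conservation laws.

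\medskip\noindent\emph{Step 1 (approximate solutions).} I would look for
$$u^{ap}_{\beta,n}(t,x)=\delta\,\phi\!\left(\frac{x-5n^4t}{N}\right)\cos\bigl(nx-n^5t+c_2\beta t\bigr)+\beta n^{-3}\phi\!\left(\frac{x}{N}\right)+r_{\beta,n}(t,x),$$
with $\phi\in C_c^\infty$, $\delta=\varepsilon N^{-1/2}n^{-s}$ for a small fixed $\varepsilon$ (so the high mode has $H^s$ norm $\sim\varepsilon$), $N=n^{\kappa}$ with $\tfrac32<\kappa<6$ to be calibrated, and $5n^4$ the group velocity of $\px^5$ at frequency $n$ (so $\px^5$ produces no $O(\delta n^4/N)$ error on the envelope). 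On the support of the high mode the background is constant, so $\px(\text{background})\equiv0$ there and $c_1\px u\px^2u$ contributes nothing at leading order; the only resonant contribution of $c_2u\px^3u$ is $c_2(\beta n^{-3})\px^3(\text{high mode})$, of size $\sim c_2\beta\delta$, and it is cancelled exactly by the phase $c_2\beta t$; the high--high self-interaction lives near frequency $2n$, is nonresonant, and is removed by the correction $r_{\beta,n}=O(\delta^2n^{-2})$. One then checks that the residual of $u^{ap}_{\beta,n}$ is $o(1)$ both in $H^s$ and in $H^3$. Choosing $\beta_1\ne\beta_2$ with $|\beta_1-\beta_2|$ a small constant, these profiles are bounded in $H^s$, have $\|u^{ap}_{\beta_1,n}(0)-u^{ap}_{\beta_2,n}(0)\|_{H^s}\sim|\beta_1-\beta_2|\,n^{\kappa/2-3}\to0$, and satisfy for every $t\in[0,1]$
$$\|u^{ap}_{\beta_1,n}(t)-u^{ap}_{\beta_2,n}(t)\|_{H^s}\ \geq\ 2\varepsilon\,\bigl|\sin\tfrac{c_2(\beta_1-\beta_2)t}{2}\bigr|-o(1)\ \geq\ c\,|t|-o(1),$$
using $\|\cos(nx+a)-\cos(nx+b)\|_{H^s}\sim n^s|\sin\tfrac{a-b}{2}|$ and $|\beta_1-\beta_2|$ small enough that the argument stays in $[0,\tfrac{\pi}{2}]$.

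\medskip\noindent\emph{Step 2 (genuine solutions).} For $s>\tfrac52$: since $\varepsilon$ is small, Theorem~\ref{lwp} gives solutions $u_n,v_n$ of \eqref{fifth} with the above data on all of $[0,1]$ with $H^s$ norms $\lesssim\varepsilon$, and the energy estimates behind its proof yield the stability bound $\|u_n-u^{ap}_{\beta_1,n}\|_{C([0,1];H^s)}\to0$ (likewise $v_n$); the triangle inequality and the previous display give $\liminf_n\|u_n(t)-v_n(t)\|_{H^s}\ge c|t|$. For $s>0$ with the two conservation laws: the data are smooth, so I would (i) run the Cauchy theory at regularity $3>\tfrac52$, iterating Theorem~\ref{lwp} to reach $[0,1]$ while keeping $\|u_n(t)\|_{H^3}\lesssim\|u_n(0)\|_{H^3}$ by the rescaling trick $u\mapsto\lambda^2u(\lambda^5t,\lambda x)$ (pick $\lambda$ so the rescaled data are small in $H^3$, apply the small-data $H^3$ conservation law to the rescaled solution, scale back); (ii) interpolate this $H^3$ bound with the exact $L^2$ conservation law to get $\|u_n(t)\|_{H^s}\lesssim\varepsilon$ on $[0,1]$; (iii) estimate $w_n=u_n-u^{ap}_{\beta_1,n}$ by an energy inequality for the difference equation \emph{at regularity $3$}, whose worst coefficient $\|\px^3u^{ap}\|_{L^\infty}\sim\varepsilon N^{-1/2}n^{3-s}$ is forced to $0$ by the choice of $\kappa$ (and $\varepsilon$), obtaining $\|w_n\|_{C([0,1];H^3)}\to0$ and hence, a fortiori, $\|w_n\|_{C([0,1];H^s)}\to0$; then conclude as before. (For \eqref{fifthkdv} the integrable structure supplies all the needed bounds directly.)

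\medskip\noindent The crux is the calibration of $N=n^{\kappa}$: one needs $\kappa<6$ (so the data converge and the $H^s,H^3$ norms stay bounded), $\kappa>6-2s$ (so $\|\px^3u^{ap}\|_{L^\infty}\to0$ and the Gronwall factor in the difference estimate stays harmless), and $\kappa>3-\tfrac s2$ (so the residual is small in $H^3$); that these three are simultaneously satisfiable for every $s>0$, together with checking that the $H^3$-conservation bound genuinely survives the rescaling, is the part that requires care. The remaining ingredients---the symbol calculus for $\px^5$ acting on the travelling envelope, the bookkeeping of $r_{\beta,n}$, and the elementary Sobolev estimates---I expect to be routine.
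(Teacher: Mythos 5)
Your overall strategy is the Koch--Tzvetkov scheme the paper itself uses: a wide low-frequency background $\sim n^{-3}$ carries a high-frequency packet at frequency $n$, the cubic nonlinearity $u\px^3u$ imposes a phase shift $\sim t$ proportional to the background amplitude, and one then shows the approximate solutions track genuine ones. For $s>\frac52$ your Step~2 is essentially the paper's argument and should close.

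There are, however, two genuine issues. First, a structural one: the paper takes the low-frequency part of the ansatz to be the \emph{exact nonlinear solution} $u_{low}(t,x)$ evolving from $-\omega\lambda^{-3}\phi^w_\lambda$, so that the low-low residual $F_1$ vanishes identically and Lemma~\ref{lowbound} controls how much $u_{low}$ drifts from its initial profile (which is precisely what is needed for the phase term $F_4$). You instead freeze the low-frequency bump $\beta n^{-3}\phi(x/N)$, and you also let the high-frequency envelope translate at speed $5n^4$. Both choices create new errors you have not accounted for: the frozen background has a nonzero residual under $\partial_t+\px^5+\cdots$, and, more seriously, when $\kappa<4$ the packet moves a distance $\sim n^4\gg N$ over $t\in[0,1]$ and leaves the support of your background, so the constancy of the background on $\operatorname{supp}\phi_\lambda$ that cancels the resonant term is lost. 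The paper avoids both by choosing a \emph{stationary} envelope of width $\lambda^{4+\delta}>\lambda^4$ (hence $\kappa>4$, $\delta>0$), paying only a small $[\px^5,\phi_\lambda]$ commutator error that $\delta>0$ makes negligible. Your calibration $\kappa\in(\max(6-2s,3-\tfrac s2),6)$ allows $\kappa<4$ for $s>1$; as stated that regime does not work without widening the background bump.

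Second, and more decisively, Step~2(iii) for $0<s\le\frac52$ is wrong as written. You claim the $H^3$ energy estimate for $w_n=u_n-u^{ap}_n$ yields $\|w_n\|_{C([0,1];H^3)}\to 0$ because $\|\px^3u^{ap}\|_{L^\infty}\to0$. But vanishing of the Gronwall coefficient is not enough: the inhomogeneity $F$ in the $w$-equation is \emph{large} in $H^3$. From the residual estimate one gets $\|F\|_{H^3}\lesssim \lambda^{3-\delta-s}+\lambda^{3+\frac{2-\delta}{2}-2s}$, which blows up for small $s$; and independently $\|u^{ap}_n\|_{H^3}\sim n^{3-s}\to\infty$, so $\|w_n\|_{H^3}$ cannot tend to zero. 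The step that actually works (and is what the paper does) is: (a) obtain a \emph{small} bound $\|w_n\|_{L^\infty_TL^2_x}\lesssim n^{-s-\beta}$ using the exact $L^2$ conservation law to kill the cubic-in-$w$ terms in the energy identity for $w$, together with a correctional (modified-energy) term for the remaining $\int\px v\,\px w\,\px w$-type term; (b) obtain a merely \emph{bounded} (and in fact large) estimate $\|w_n\|_{H^3}\lesssim n^{3-s}$ by the triangle inequality, using the hypothesized $H^3$ conservation for $u_n$ and a direct computation for $u^{ap}_n$; (c) interpolate $\|w_n\|_{H^s}\lesssim\|w_n\|_{H^3}^{s/3}\|w_n\|_{L^2}^{(3-s)/3}\lesssim n^{-\beta(3-s)/3}\to0$. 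Replacing your (iii) by this $L^2$-smallness--plus--interpolation argument is necessary; the $H^3$ difference estimate by itself cannot give what you assert.
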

In observance of the negative behavior of the solution map, one expects to prove the local well-posedness by the compactness method. Previously, in \cite{ponce94} Ponce proved the local well-posedness for Sobolev initial data $u_0 \in H^s(\mathbb{R}), s\geq 4$. Our result is an improvement of Ponce's. Our proof of the local well-posedness is based on the energy method combining with available global and local smoothing estimates. So, our basic strategy is the same as Ponce's.

One difficulty of \eqref{fifth}, which doesn't appear in the BO or the KP-I equation, is multi-derivatives in nonlinear terms, especially $u\px^3u$. The standard energy estimate gives only the following:

$$  \frac{d}{dt}\|\px^ku(t)\|^2_{L^2_x} \lesssim \|\px^3u\|_{L^\infty_x}\|\px^ku(t)\|_{L^2_x} + \Big|\int \px u \px^{k+1}u \px^{k+1}u\Big|   $$
because multi-derivatives may split when we take integrations by parts.

The last term is not favorable since it is a higher Sobolev norm than supposed to be. In Section~\ref{menergy} we modify the energy by adding a correctional term to cancel out the above last term. This idea is used several times in our analysis, actually whenever we take inner product to find the control of the time increment of a norm of a solution. In addition to this, we use the time chopping idea with respect to frequencies to improve the linear Strichartz estimate with a global smoothing effect. This idea was introduced by Koch and Tzvetkov \cite{koch-tzvetkov1} and improved by Kenig and Koenig \cite{kenig-koenig} in the context of the Benjamin-Ono equation. We also use Kato's local smoothing estimate complemented by the maximal function estimate.\\
In the proof of Thoerem~\ref{ill-posed}, we follow closely \cite{koch-tzvetkov2} for construction of the approximate solution. Unlike Benjamin-Ono equation, \eqref{fifth} may not have higher order conservation laws. Thus, we use well-posedness result for $s >\frac{5}{2}$, instead of conservation laws, for proving that the approximate solution is a good approximate solution in the $H^s$ sense. But in the region that well-posedness is unavailable ($ 0<s\leq \frac{5}{2}$) we have to assume that there is at least one higher conservation law and an $L^2$ conservation law, which \eqref{fifthkdv} satisfies. \\

The rest of the paper is organized as follows: in Section~\ref{notation} we summarize notations and standard lemmas for fractional derivatives.      In Section~\ref{menergy} we introduce the modified energy method to prove the energy estimate. In Section~\ref{linearestimate} we recall the linear estimates and show a refined Strichartz estimate. We also show Kato's local smoothing estimate for nonlinear solutions. In Section~\ref{lwpproof} we give the proof of Theorem~\ref{lwp}. After getting an \emph{a priori} bound, we use the $\epsilon$-approximate method introduced by Bona and Smith \cite{BS} for proving continuous dependence. Finally, in Section~\ref{illproof} we show by counter examples the lack of uniform continuity of the solution map.
\subsection*{Acknowledgements}
The author is very grateful to Terence Tao for many helpful conversations and encouragement. He is also indebted to him for suggesting this research problem. \\

\section{notation and preliminaries}\label{notation}
We use $X\lesssim Y$ when $X \leq CY $ for some $C$. We use $ X \sim  Y $ when $ X \lesssim Y $ and $ Y\lesssim X$.  Moreover, we use $ X \lesssim_s Y $ if the implicit constant depends on $s$, $C=C(s) $. \\
For a Schwartz function $u_0(x) $, we denote the linear solution $u(t,x)$ to the equation $ \partial_t u + \partial_x^5 u = 0 $ by
$$ u(t,x) = e^{-t\px^5}u_0(x) =  c \int \int e^{-t|\xi|^5}e^{i(x-y)\xi}u_0(y) dyd\xi. $$
Using this notation we have the Duhamel formula for the solution to the inhomogeneous linear equation $ \partial_t u + \partial_x^5 u + F = 0 $
$$ u(t,x) = e^{-t\px^5}u_0(x) - \int_0^t e^{-(t-t')\px^5}F(t',x) dt'.  $$
We use the space-time norm $L^p_TL^q_x $.
$$ \|u\|_{L^p_TL^q_x} = \Big( \int_0^T\Big( \int  |u(t,x)|^q dx \Big)^{p/q} dt\Big)^{1/p}  $$
with usual modification for $ p \emph{ or } q =\infty$.\\
We use the standard mollifier $ \rho_\epsilon $. More precisely, let $\rho$ be a function in $ C^\infty_0(\mathbb{R}) $ satisfying
$$ \rho \geq 0, \qquad \text{supp} \in [-1,1],\qquad \int \rho\, dx =1. $$
For $\epsilon >0 $, we denote $ \rho_\epsilon(x) := \frac{1}{\epsilon}\rho(\frac{x}{\epsilon}) $. For $ f \in L^1_{loc} $, denote $ f^\epsilon := \rho_\epsilon * f $. We use the following well-known lemma for mollified functions:

\begin{lemma}
Let $f \in W^{s,p}(\mathbb{R}^n)$ with $1 < p < \infty$ and $f^\epsilon = \rho_\epsilon * f $, then for any $\epsilon >0 $

\begin{equation}\label{v_epsilon}
\|f^\epsilon \|_{W^{s+t,p}} \lesssim_{s,t,p} \epsilon^{-t} \|f\|_{W^{s,p}} \qquad \text{ for } t>0,
\end{equation}
\begin{equation}\label{w_epsilon}
\|f-f^\epsilon \|_{W^{s-t,p}} \lesssim_{s,t,p} \epsilon^{t} \|f\|_{W^{s,p}} \qquad \text{ for } 0 \leq t \leq s.
\end{equation}
\end{lemma}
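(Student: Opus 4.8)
The plan is to pass to the Fourier side and deduce both estimates from $L^{1}$-bounds on two explicit convolution kernels. Since $\langle D\rangle^{\sigma}$ commutes with convolution and $\widehat{f^{\epsilon}}(\xi)=\widehat{\rho}(\epsilon\xi)\widehat{f}(\xi)$ with $\widehat{\rho}\in\mathcal{S}(\mathbb{R}^{n})$, $\widehat{\rho}(0)=\int\rho=1$, we have $\langle D\rangle^{s+t}f^{\epsilon}=\Psi_{\epsilon}*\langle D\rangle^{s}f$ and $\langle D\rangle^{s-t}(f-f^{\epsilon})=\Phi_{\epsilon}*\langle D\rangle^{s}f$, where $\widehat{\Psi_{\epsilon}}(\xi)=\langle\xi\rangle^{t}\widehat{\rho}(\epsilon\xi)$ and $\widehat{\Phi_{\epsilon}}(\xi)=\langle\xi\rangle^{-t}\bigl(1-\widehat{\rho}(\epsilon\xi)\bigr)$. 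Using $\|g\|_{W^{\sigma,p}}=\|\langle D\rangle^{\sigma}g\|_{L^{p}}$ and Young's inequality, \eqref{v_epsilon} and \eqref{w_epsilon} follow once we show
$$\|\Psi_{\epsilon}\|_{L^{1}}\lesssim_{s,t,p}\epsilon^{-t},\qquad\|\Phi_{\epsilon}\|_{L^{1}}\lesssim_{s,t,p}\epsilon^{t},$$
and, since Young's inequality is insensitive to $p$, this even gives the lemma for all $1\le p\le\infty$. We restrict to $0<\epsilon\le1$, which is the regime needed in the paper.

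For \eqref{v_epsilon}, the case $t=0$ is immediate: $\langle D\rangle^{s}(\rho_{\epsilon}*f)=\rho_{\epsilon}*\langle D\rangle^{s}f$ and $\|\rho_{\epsilon}\|_{L^{1}}=1$. For $t>0$, dilating by $\xi\mapsto\xi/\epsilon$ gives $\Psi_{\epsilon}(x)=\epsilon^{-n}\Psi_{\epsilon}^{\sharp}(x/\epsilon)$ with $\widehat{\Psi_{\epsilon}^{\sharp}}(\eta)=\langle\eta/\epsilon\rangle^{t}\widehat{\rho}(\eta)=\epsilon^{-t}\bigl(\epsilon^{2}+|\eta|^{2}\bigr)^{t/2}\widehat{\rho}(\eta)$, hence $\|\Psi_{\epsilon}\|_{L^{1}}=\epsilon^{-t}\bigl\|\bigl(\bigl(\epsilon^{2}+|\cdot|^{2}\bigr)^{t/2}\widehat{\rho}\bigr)^{\vee}\bigr\|_{L^{1}}$. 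It remains to note that $\{(\epsilon^{2}+|\cdot|^{2})^{t/2}\widehat{\rho}:0<\epsilon\le1\}$ is a bounded subset of $\mathcal{S}(\mathbb{R}^{n})$ — every derivative of $(\epsilon^{2}+|\eta|^{2})^{t/2}$ is dominated by $C_{\alpha}\langle\eta\rangle^{t-|\alpha|}$ uniformly in $\epsilon$, and $\widehat{\rho}$ is Schwartz — so the inverse transforms are uniformly bounded in $L^{1}$, giving $\|\Psi_{\epsilon}\|_{L^{1}}\lesssim\epsilon^{-t}$. (Alternatively, one may first treat integer $t=k$ by $\partial^{\alpha}\rho_{\epsilon}=\epsilon^{-|\alpha|}(\partial^{\alpha}\rho)_{\epsilon}$ and Young, and then interpolate in $t$.)

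For \eqref{w_epsilon}, note that $\widehat{\Phi_{\epsilon}}(\xi)=\langle\xi\rangle^{-t}-\langle\xi\rangle^{-t}\widehat{\rho}(\epsilon\xi)$ means $\Phi_{\epsilon}=G_{t}-\rho_{\epsilon}*G_{t}$, where $G_{t}$ is the Bessel potential kernel, $\widehat{G_{t}}(\xi)=\langle\xi\rangle^{-t}$, which satisfies $G_{t}\ge0$ and $G_{t}\in L^{1}$. Since $\int\rho_{\epsilon}=1$ and $\mathrm{supp}\,\rho_{\epsilon}\subset\{|y|\lesssim\epsilon\}$,
$$\|\Phi_{\epsilon}\|_{L^{1}}=\Bigl\|\int\rho_{\epsilon}(y)\bigl(G_{t}-G_{t}(\cdot-y)\bigr)\,dy\Bigr\|_{L^{1}}\le\sup_{|y|\lesssim\epsilon}\|G_{t}-G_{t}(\cdot-y)\|_{L^{1}},$$
so it suffices to control the $L^{1}$-modulus of continuity of $G_{t}$ at scale $\epsilon$. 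Since $\widehat{G_{t}}=\langle\xi\rangle^{-t}$, a Littlewood--Paley decomposition yields $\|P_{j}G_{t}\|_{L^{1}}\lesssim2^{-jt}$ for $j\ge0$, and combining the Bernstein bound $\|P_{j}G_{t}-P_{j}G_{t}(\cdot-y)\|_{L^{1}}\lesssim|y|2^{j}\|P_{j}G_{t}\|_{L^{1}}$ with the trivial bound $\lesssim\|P_{j}G_{t}\|_{L^{1}}$, summed over the dyadic ranges $2^{j}\le|y|^{-1}$ and $2^{j}>|y|^{-1}$, gives $\|G_{t}-G_{t}(\cdot-y)\|_{L^{1}}\lesssim|y|^{t}$ for $|y|$ small; hence $\|\Phi_{\epsilon}\|_{L^{1}}\lesssim\epsilon^{t}$, which is \eqref{w_epsilon}.

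The one point requiring genuine care is uniformity in $\epsilon$: pointwise control of the Fourier multipliers $\langle\xi\rangle^{t}\widehat{\rho}(\epsilon\xi)$ and $\langle\xi\rangle^{-t}(1-\widehat{\rho}(\epsilon\xi))$ would only give $p=2$, so the argument instead keeps genuine $L^{1}$-control of the kernels — via the Schwartz-boundedness of a rescaled family in \eqref{v_epsilon}, and via the Besov regularity of the Bessel kernel $G_{t}$ in \eqref{w_epsilon}. In \eqref{w_epsilon} the sharp power $\epsilon^{t}$ is extracted from the order of vanishing of $1-\widehat{\rho}$ at the origin, so the admissible range of $t$ is tied to the choice of $\rho$; for a generic nonnegative mollifier $1-\widehat{\rho}(\eta)=O(|\eta|)$, and this range can be widened by taking $\rho$ with extra cancellation, while for small $t$ no such choice is needed. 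All remaining manipulations are routine dilation bookkeeping and standard Littlewood--Paley estimates.
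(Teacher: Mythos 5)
The paper states this lemma without proof, citing it as well-known, so there is no in-paper argument to compare against; I assess your proof directly. The kernel-plus-Young reduction is the right framework, but each half has a gap. In \eqref{v_epsilon}, the assertion that $\{(\epsilon^2+|\cdot|^2)^{t/2}\widehat{\rho} : 0<\epsilon\le1\}$ is bounded in $\mathcal{S}(\mathbb{R}^n)$ is false for non-integer $t$: near $\eta=0$ the derivative $\partial_\eta^\alpha(\epsilon^2+|\eta|^2)^{t/2}$ scales like $\epsilon^{t-|\alpha|}$, which is unbounded as $\epsilon\to0$ once $|\alpha|>t$, and multiplying by $\widehat{\rho}$ (which is $\approx 1$ there) does not help, so the claimed uniform $L^1$ control of the inverse transforms does not follow from this route. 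Your parenthetical alternative --- prove integer $t=k$ via $\partial^\alpha\rho_\epsilon=\epsilon^{-|\alpha|}(\partial^\alpha\rho)_\epsilon$ and Young, then interpolate in the Bessel-potential scale --- is correct and should be the argument of record rather than a footnote.

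The gap in \eqref{w_epsilon} is more substantial. Your Littlewood--Paley computation does not yield $\|G_t-G_t(\cdot-y)\|_{L^1}\lesssim|y|^t$: the low-frequency sum $\sum_{2^j\le|y|^{-1}}|y|\,2^{j(1-t)}$ is dominated by the $j=0$ term when $t>1$ and is only $O(|y|)$, so the computation gives $|y|^{\min(t,1)}$ (with a logarithm at $t=1$), hence only $\|\Phi_\epsilon\|_{L^1}\lesssim\epsilon^{\min(t,1)}$. Note also that this computation uses only $\int\rho=1$ and compact support; the order of vanishing of $1-\widehat{\rho}$ at the origin --- which you rightly name as the true bottleneck in your closing remark --- never actually enters it, so the claimed $|y|^t$ and the disclaimer are inconsistent with one another. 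The issue is not cosmetic: with a nonnegative mollifier \eqref{w_epsilon} is in fact false once $t>2$ (take $\widehat{f}$ concentrated near a fixed $\xi_0$; then $\|f-f^\epsilon\|_{L^2}\sim\epsilon^2|\xi_0|^2\|f\|_{L^2}$ for even $\rho$, while $\epsilon^t\|f\|_{H^t}$ decays faster as $\epsilon\to0$), and the paper later applies \eqref{w_epsilon} with $t=s>\tfrac52$ in the proof of Proposition~\ref{normconvergence}. The argument behind the ``well-known'' citation is the Bona--Smith one, using a mollifier with $1-\widehat{\rho}(\eta)=O(|\eta|^N)$, $N\ge s$; such $\rho$ must change sign, so the paper's standing hypothesis $\rho\ge0$ is, strictly speaking, incompatible with the full claim. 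To make your proof cover $t>1$ you must feed the moment conditions into the estimate --- for instance Taylor-expand $G_t(x)-G_t(x-y)$ in $y$ to order $N$, annihilate the low-order terms using $\int y^\alpha\rho\,dy=0$, and bound the remainder in $L^1$ --- rather than relying on the bare $L^1$ modulus of continuity of the Bessel kernel, which saturates at order one.
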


We denote $ D^s$ is a homogeneous fractional derivative whose symbol is $ |\xi|^s $, while $J^s$ is an inhomogeneous derivative whose symbol is $ (1+\xi^2)^{\frac{s}{2}} $. Note $ \|J^su\|_{L^2} = \|u\|_{H^s} $. We use the following standard fractional Leibnitz rule and the Kato-Ponce commutator estimate. 

\begin{lemma}
(a)Let $ 0 \leq s,s_1,s_2<1$ and $ 1<p,p_1,p_2<\infty $ be such that $\frac{1}{p_1} +\frac{1}{p_2} = \frac{1}{p}$, and $s_1+s_2=s $. Then
\begin{equation}\label{product}
\| D^s(fg)-(D^sf)g-f(D^sg) \|_{L^p} \lesssim_{p,p_1,p_2,s,s_1,s_2} \|D^{s_1}f\|_{L^{p_1}}\|D^{s_2}g\|_{L^{p_2}}.
\end{equation}
Moreover, if $s_2=0$, $ p_2= \infty $ is allowed.\\
(b) In particular, for $ s \geq 1$
\begin{equation}\label{K-P}
\| [D^s;f]g \|_{L^p} \lesssim_{s,p} \|\nabla f\|_{L^{p_1}}\|D^{s-1}g\|_{L^{p_2}} + \|D^sf\cdot g\|_{L^{p}}
\end{equation}
where $p_1=\infty $ is allowed.\\
\end{lemma}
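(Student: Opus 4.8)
Both estimates are classical --- (a) is the fractional Leibniz rule of Kenig, Ponce and Vega and (b) the Kato--Ponce commutator estimate --- so in the paper one would simply cite them (e.g.\ \cite{KPV91} and the references therein). For completeness I indicate the two standard proofs, which proceed along quite different lines: a pointwise kernel identity followed by a square function characterization for (a), and a Littlewood--Paley/paraproduct decomposition for (b).

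For part (a) the plan is to exploit, in the range $0\le s<1$ (the case $s=0$ being just H\"older's inequality), the hypersingular integral representation
\[
D^\sigma h(x)=c_{n,\sigma}\,\mathrm{p.v.}\!\int_{\mathbb R^n}\frac{h(x)-h(y)}{|x-y|^{n+\sigma}}\,dy,\qquad 0<\sigma<2 .
\]
Substituting this into $D^s(fg)-(D^sf)g-f(D^sg)$, all three principal values combine and the singular parts cancel, leaving the absolutely convergent double difference
\[
c_{n,s}\int_{\mathbb R^n}\frac{\bigl(f(x)-f(x-z)\bigr)\bigl(g(x)-g(x-z)\bigr)}{|z|^{n+s}}\,dz .
\]
Since $s_1+s_2=s$ one factors $|z|^{-(n+s)}=|z|^{-(n/2+s_1)}\,|z|^{-(n/2+s_2)}$ and applies Cauchy--Schwarz in $z$, which bounds the integrand, for each fixed $x$, by the product of $S_{s_1}f(x)$ and $S_{s_2}g(x)$, where $S_\sigma h(x):=\bigl(\int|h(x)-h(x-z)|^2|z|^{-(n+2\sigma)}\,dz\bigr)^{1/2}$. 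Taking $L^p_x$ norms and using H\"older with $\tfrac1p=\tfrac1{p_1}+\tfrac1{p_2}$ then reduces everything to the Stein square function identity $\|S_\sigma h\|_{L^q}\sim\|D^\sigma h\|_{L^q}$, valid for $0<\sigma<1$, $1<q<\infty$, applied with $(\sigma,q)=(s_1,p_1)$ and $(s_2,p_2)$. In the limiting case $s_2=0$, $p_2=\infty$ one instead bounds $|f(x)-f(x-z)|\le 2\|f\|_{L^\infty}$ and controls $\int |g(x)-g(x-z)|\,|z|^{-(n+s)}\,dz$ pointwise by a Hardy--Littlewood maximal function of $D^sg$, whose $L^{p_1}$ norm is $\lesssim\|D^sg\|_{L^{p_1}}$.

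For part (b), with $s\ge1$ fixed, the plan is a Bony paraproduct decomposition of $[D^s;f]g=D^s(fg)-fD^sg$ according to whether the frequency of $f$ is much smaller than, comparable to, or much larger than that of $g$. In the first regime, on each Littlewood--Paley piece the multiplier acting on the product differs from the one acting on $g$ alone by $|\xi|^s-|\xi-\eta|^s$ with $|\eta|$ (the low $f$-frequency) small; by the mean value theorem this equals $\eta\cdot\int_0^1\nabla(|\cdot|^s)(\xi-\eta+t\eta)\,dt$ with $|\nabla(|\zeta|^s)|\sim|\zeta|^{s-1}$, so one derivative is transferred onto $f$ and $s-1$ are removed from $g$; summing the resulting uniformly bounded bilinear (Coifman--Meyer type) multipliers with the Littlewood--Paley square function and H\"older gives $\lesssim\|\nabla f\|_{L^{p_1}}\|D^{s-1}g\|_{L^{p_2}}$, and $p_1=\infty$ is admissible because only low-frequency pieces of $\nabla f$, dominated by $\|\nabla f\|_{L^\infty}$, enter. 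In the third regime the output frequency is governed by $f$, so the derivative falls on the high-frequency factor: to leading order this recombines into the pointwise product $(D^sf)\cdot g$, contributing $\|(D^sf)\,g\|_{L^p}$, while the lower-order corrections again transfer a derivative and fall under the first estimate. The resonant (comparable-frequency) regime is handled by a routine bilinear estimate, using $s\ge1$ so that $D^{s-1}$ has nonnegative order and the frequency sum converges.

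The main obstacle in each part is making the gain of a derivative rigorous: in (a) this is exactly the Stein square function identity $\|S_\sigma h\|_{L^q}\sim\|D^\sigma h\|_{L^q}$, a genuine Littlewood--Paley theorem which I would invoke rather than reprove; in (b) it is the uniform control of the bilinear multipliers produced by expanding $|\xi|^s$, i.e.\ the Coifman--Meyer multilinear multiplier theorem. A secondary difficulty in both cases is the endpoint exponents --- $p_1=\infty$ in (b), and $p_2=\infty$, $s_2=0$ in (a) --- where Calder\'on--Zygmund/Mikhlin theory is not directly available on $L^\infty$, so one must arrange the decompositions so that the $L^\infty$ factor is always the low-frequency, harmless one.
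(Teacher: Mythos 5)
The paper offers no proof of this lemma at all: it simply cites \cite{kato-ponce} and \cite{KPV93}, which is exactly your primary move, so your proposal takes essentially the same approach as the paper. Your supplementary sketches are the standard arguments (hypersingular integral plus Stein square function for (a), paraproduct/Coifman--Meyer for (b)); the only soft spot is the $s_2=0$, $p_2=\infty$ endpoint of (a), where $\int|g(x)-g(x-z)|\,|z|^{-(n+s)}\,dz$ is not in fact pointwise controlled by a maximal function of $D^sg$ (the $z$-integral diverges logarithmically for merely $C^s$ data), but since the paper proves nothing here this does not affect the comparison.
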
 
For the proof of this lemma, we refer, for instance, \cite{kato-ponce}, \cite{KPV93}.\\
In addition, in our analysis for the modified energy estimate we need a generalized form of the commutator estimate.

\begin{lemma}[Generalized commutator estimate]\label{commutator}
Let $s>0$. Then
 \begin{align*} \|D^s(u\px^3v) - uD^s(\px^3v) - s
\px uD^s(\px^2v) - \frac{s(s-1)}{2}&\px^2uD^s(\px v) \|_{L^2} \\ \lesssim_s &
\|\px^3u\|_{L^\infty}\|D^sv\|_{L^2}  + \|\px^3v\|_{L^\infty}\|D^su\|_{L^2},\\
\|D^s(\px u\px^2v) - \px uD^s(\px^2v) - s\px^2uD^s(\px v) \|_{L^2}
\lesssim_s &\|\px^3u\|_{L^\infty}\|D^sv\|_{L^2} + \|\px^3v\|_{L^\infty}\|D^su\|_{L^2}
\end{align*} hold true.
\end{lemma}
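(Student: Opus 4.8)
The plan is to prove both estimates by reducing them to the Kato–Ponce commutator estimate \eqref{K-P}, applied iteratively to peel off successive derivatives, with the fractional Leibnitz rule \eqref{product} used to control the resulting error terms. Consider the first estimate. The quantity on the left is, up to relabeling, the second-order Taylor-type remainder of $D^s$ acting on the product $u\cdot(\px^3 v)$: the terms $uD^s(\px^3v)$, $s\px u D^s(\px^2 v)$, $\tfrac{s(s-1)}{2}\px^2 u D^s(\px v)$ are precisely the first three terms one gets when one formally expands $D^s(u\,w)$ by moving all derivatives onto $w$ and Taylor-expanding the symbol $|\xi+\eta|^s$ in $\eta$ around $\xi$ (with $w=\px^3 v$, and the extra $\px$'s coming from writing $|\xi+\eta|^s(i\eta)^3 \sim$ combinations of $(i\xi)^j(i\eta)^{3-j}|\xi+\eta|^{s}$-type multipliers). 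So the natural approach is: first write $D^s(u\px^3 v) - u D^s(\px^3 v) = [D^s;u]\px^3 v$; then iterate, extracting $\px u$ and then $\px^2 u$ by applying a commutator identity twice more to the remainder. Concretely, I would use the algebraic identity
\begin{equation*}
[D^s; u]\px^3 v = \px\big([D^s;u]\px^2 v\big) - [D^s;\px u]\px^2 v,
\end{equation*}
and similarly $[D^s;u]\px^2 v = \px\big([D^s;u]\px v\big) - [D^s;\px u]\px v$, so that after substitution the "main" pieces combine to give $uD^s\px^3 v + s\px u D^s\px^2 v + \tfrac{s(s-1)}{2}\px^2 u D^s \px v$ plus genuine double/triple commutator remainders such as $\px^2\big([D^s;\px^2 u]v\big)$-type expressions (here some care is needed: the binomial coefficients $1, s, \tfrac{s(s-1)}{2}$ must come out correctly, which forces one to track the combinatorics of how many $\px$ fall on the commutator versus on $u$; the cleanest bookkeeping is to verify the identity on the Fourier side, where $D^s(u\px^3v)$ has symbol $|\xi|^s$ and the claimed main terms have symbols $|\xi_2|^s$, $s\xi_1|\xi_2|^{s-1}\cdot(\text{sign factors})$, etc., so the remainder symbol is $O(|\xi_1|^3 |\xi_2|^{s-3}+\dots)$ away from the region $|\xi_1|\ll|\xi_2|$, and bounded by $|\xi_1|^s+|\xi_2|^s$ type bounds in the region $|\xi_1|\gtrsim|\xi_2|$).

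Once the left-hand side is expressed as a sum of iterated commutators (two or three applications of $[D^s;\cdot]$, each eating one derivative and each accompanied by at most three total $\px$'s distributed on the outside), I would estimate each term by \eqref{K-P}. A typical term looks like $\px^{a}\big([D^s;\px^{b} u]\px^{c} v\big)$ with $a+b \le$ small and $b \ge 1$; applying \eqref{K-P} with $p=2$, $p_1=\infty$, $p_2=2$ gives
\begin{equation*}
\| [D^s;\px^b u]\px^c v \|_{L^2} \lesssim_s \|\nabla \px^b u\|_{L^\infty}\|D^{s-1}\px^c v\|_{L^2} + \|D^s\px^b u \cdot \px^c v\|_{L^2},
\end{equation*}
and the total derivative count is arranged so that $\nabla\px^b u$ never exceeds $\px^3 u$ and $D^{s-1}\px^c v$ never exceeds $D^s v$ in total order (and symmetrically with $u,v$ swapped in the second product), after redistributing the outer $\px^a$ onto the factors via the Leibnitz rule and, when needed, the fractional product rule \eqref{product} to split a $D^s$ off the low-frequency factor. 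The outer derivatives $\px^a$ are absorbed by noting $\|\px^a(\,\cdot\,)\|_{L^2}$ of a product can be expanded by Leibnitz and each resulting term again has the form $\|\px^3 u\|_{L^\infty}\|D^s v\|_{L^2} + \|\px^3 v\|_{L^\infty}\|D^s u\|_{L^2}$ after using that $a \le 3$. The second estimate of the lemma is strictly easier: it is the first-order Taylor remainder for $D^s(\px u\px^2 v)$ and follows by the same method with one fewer iteration, or alternatively by applying the already-proved first estimate together with the identity $\px u\,\px^2 v = \px(u\px^2 v) - u\px^3 v$ and the elementary commutator bound.

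The main obstacle I anticipate is not any single estimate but the bookkeeping: getting the precise coefficients $1$, $s$, $\tfrac{s(s-1)}{2}$ to emerge, and making sure that in every remainder term the derivatives are distributed so that one factor carries at most three $\px$'s in $L^\infty$ while the other carries at most $s$ orders in $L^2$ — i.e. that no term secretly needs $\|\px^4 u\|_{L^\infty}$ or $\|D^{s+1}v\|_{L^2}$. This is exactly the point where one must be careful about the frequency regime $|\xi_1|\gtrsim |\xi_2|$ (both factors comparable or the "$u$" factor higher): there the symbol of $D^s$ is not well-approximated by its Taylor expansion, and one instead bounds the remainder symbol crudely by $|\xi_1|^s + |\xi_2|^s$, which on the physical side is controlled by moving all of $D^s$ onto $u$ and using $\|\px^3 v\|_{L^\infty}$ for the (at most three) derivatives on $v$, or vice versa — hence the symmetric form of the right-hand side. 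Keeping these two regimes cleanly separated (e.g. via Littlewood–Paley trichotomy high-high / high-low / low-high) is the technical heart of the argument; once that decomposition is fixed, each piece falls to \eqref{product}–\eqref{K-P} as above.
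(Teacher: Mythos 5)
There is a genuine gap at the heart of your proposed mechanism. The algebraic identity $[D^s;u]\px^3 v = \px\bigl([D^s;u]\px^2 v\bigr) - [D^s;\px u]\px^2 v$ is correct, but it is purely a Leibniz-rule identity and holds for any Fourier multiplier in place of $D^s$; it does not involve the symbol $|\xi|^s$ at all. Consequently, iterating it can only ever redistribute $\px$'s among $u$ and the commutator bracket — it cannot produce the Taylor coefficients $s$ and $\tfrac{s(s-1)}{2}$, which are specific to the homogeneity of $|\xi|^s$. If you carry out the iteration you wrote, you get
$[D^s;u]\px^3 v = \px^3\bigl([D^s;u]v\bigr) - \px^2\bigl([D^s;\px u]v\bigr) - \px\bigl([D^s;\px u]\px v\bigr) - [D^s;\px u]\px^2 v$,
which is just a rearrangement, and no choice of terms in it equals $s\px u\, D^s\px^2 v$ or $\tfrac{s(s-1)}{2}\px^2 u\, D^s\px v$. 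Moreover, applying \eqref{K-P} to $[D^s;u]\px^3 v$ directly gives $\|\px u\|_{L^\infty}\|D^{s+2}v\|_{L^2}$, which loses two derivatives, and each iterated term suffers the same loss once the outer $\px^a$ are distributed — the vanilla Kato–Ponce estimate bounds the whole commutator but does not \emph{identify} a leading term, so there is nothing for the subtracted pieces to cancel against on the physical side.

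The fix is exactly what you flag in your parentheticals but do not carry out: one must work on the symbol side. The paper writes out the bilinear symbol $\sigma(\xi_1,\xi_2)= i^3\{|\xi_1+\xi_2|^s\xi_2^3 - |\xi_2|^s\xi_2^3 - s\xi_1|\xi_2|^s\xi_2^2 - \tfrac{s(s-1)}{2}\xi_1^2|\xi_2|^s\xi_2\}$ explicitly, splits it into low-high / high-high / high-low paraproducts, and observes that in the low-high regime the binomial expansion of $(1+\xi_1/\xi_2)^s$ cancels the subtracted terms exactly through order $(\xi_1/\xi_2)^2$, so that $\sigma_{lh}/((i\xi_1)^3|\xi_2|^s)$ is a Coifman–Meyer multiplier; this recovers the bound $\|\px^3 u\|_{L^\infty}\|D^s v\|_{L^2}$. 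In the high-low regime each of the four symbol terms is a Coifman–Meyer multiplier after a suitable division, and in the high-high regime the singular first term is handled directly by a Littlewood–Paley square-function estimate. So your frequency-regime intuition and the observation that the remainder symbol is $O\bigl((\xi_1/\xi_2)^3\bigr)$ in the low-high regime are right, but they need to be the argument, carried through via the Coifman–Meyer lemma, not a parenthetical aside; the iterated-$[D^s;\cdot]$ plus \eqref{K-P} route cannot work. (Your derivation of the second inequality from the first via $\px u\px^2 v = \px(u\px^2 v) - u\px^3 v$ also doesn't go through cleanly, since the first piece introduces $D^s\px$ which is not covered; the paper simply runs the same symbol analysis again.)
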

\begin{proof}
This proof is similar to that of the Kato-Ponce commutator estimate, an application of the Coifman-Meyer theorem (see \cite{kato-ponce},\cite{coifman-meyer}).
Here we prove only the first one. The second one is proved in the same way.
Let $\sigma(\xi_1,\xi_2)$ be the symbol of the pseudo-differential operator. More precisely,
$$ \sigma(\xi_1,\xi_2) = i^3 \Big\{ |\xi_1 + \xi_2|^s\xi_2^3 - |\xi_2|^s\xi_2^3 -s\xi_1|\xi_2|^s\xi_2^2 - \frac{s(s-1)}{2}\xi_1^2|\xi_2|^s\xi_2  \Big\} $$
\begin{align*}
D^s(u\px^3v) - uD^s(\px^3v) - s
\px uD^s(\px^2v) - \frac{s(s-1)}{2}\px^2uD^s(\px v)(x) \\
= c\int\int e^{ix(\xi_1+\xi_2)} \sigma(\xi_1,\xi_2)\widehat{u}(\xi_1)\widehat{v}(\xi_2) d\xi_1d\xi_2
\end{align*}
As usual, we decompose $\sigma$ into low-high, high-low, high-high paraproduct
$$ \sigma(\xi_1,\xi_2) = \sigma_{lh}(\xi_1,\xi_2) + \sigma_{hh}(\xi_1,\xi_2) +\sigma_{hl}(\xi_1,\xi_2) $$
\begin{align*}
\sigma_{k}(\xi_1,\xi_2) = \sigma(\xi_1,\xi_2) \cdot \pi_{k}\left(\frac{\xi_1}{\xi_2}\right)  \quad & \text{supp}\,\pi_{lh} \subset [-1/3,1/3]\\
& \text{supp}\,\pi_{hh} \subset [1/4,4]\cup[-4,-1/4]\\
& \text{supp}\,\pi_{hl} \subset (-\infty,-3]\cup[3,\infty)\\
  \pi_{k} \geq 0 \qquad & \pi_{lh} + \pi_{hh} + \pi_{hl} = 1 \qquad k=lh,hh,hl
\end{align*}
We are going to show the inequality for each case. To prove $ \sigma_{lh}$ and $ \sigma_{hl}$ we need to use the following lemma due to Coifman and Meyer.
\begin{lemma}[Coifman-Meyer \cite{coifman-meyer}]\label{coifman-meyer}
Let $\sigma \in C^\infty(\mathbb{R}^m\times\mathbb{R}^m -\{(0,0)\})$ satisfy
\begin{equation}\label{cm-condition}
|\partial^{\alpha}_{\xi_1}\partial^{\beta}_{\xi_2}\sigma(\xi_1,\xi_2) | \lesssim_{\alpha,\beta} \Big(|\xi_1+\xi_2| \Big)^{-|\alpha|-|\beta|}
\end{equation}
for $(\xi_1,\xi_2)\neq (0,0) $ and any $ \alpha,\beta \in (\mathbb{Z}^+)^m$. If $\sigma(D) $ denotes the bilinear operator
$$ \sigma(D)(u,v)(x) = \int\int e^{ix\cdot (\xi_1+\xi_2)} \sigma(\xi_1,\xi_2)\widehat{u}(\xi_1)\widehat{v}(\xi_2) d\xi_1d\xi_2 $$
then
$$ \|\sigma(D)(u,v)\|_{L^2} \lesssim \|u\|_{L^\infty}\|v\|_{L^2} $$

\end{lemma}

At first, we consider the low-high paraproduct. In this case we need to use cancellation of terms in the symbol $ \sigma_{lh} $.
Since $\xi_1,\xi_2$ are restricted to the region $ |\xi_1| <\frac{1}{3}|\xi_2| $, one can rewrite the first term of $\sigma_{lh}$ as 

\begin{align*}
 |\xi_1+\xi_2|^s \xi_2^3\frac{1}{\xi_1^3|\xi_2|^s} &= \xi_2^3|\xi_2|^s\left(1+\frac{\xi_1}{\xi_2}\right)^s\frac{1}{\xi_1^3|\xi_2|^s} \\
  & = \frac{\xi_2^3}{\xi_1^3}\Big(1+ \frac{\xi_1}{\xi_2} + s\left(\frac{\xi_1}{\xi_2}\right)^2 +\frac{s(s-1)}{2}\left(\frac{\xi_1}{\xi_2}\right)^3 + \cdots \\
  &   + { s\choose k}\left(\frac{\xi_1}{\xi_2}\right)^k + \cdots    \Big)\\
\end{align*}
\begin{align*}
\sigma_{lh}\left(\xi_1,\xi_2\right) \frac{1}{(i\xi_1)^3|\xi_2|^s} &= \frac{i^3\xi_2^3}{i^3\xi_1^3} \Big\{{s\choose 3}\left(\frac{\xi_1}{\xi_2}\right)^3 + {s\choose 4}\left(\frac{\xi_1}{\xi_2}\right)^4 + \cdots \Big\} \\
& = { s\choose 3} + {s\choose 4}\left(\frac{\xi_1}{\xi_2}\right) + {s\choose 5}\left(\frac{\xi_1}{\xi_2}\right)^2 + \cdots
\end{align*}
Since $ |\xi_1| < 1/3|\xi_2| $, this infinite series converges absolutely and so are their partial derivatives. Furthermore, one can easily check that $ \sigma_{lh}(\xi_1,\xi_2)\frac{1}{(i\xi_1)^3|\xi_2|^s} $ satisfies the condition $\eqref{cm-condition}$. Thus, by the Coifman-Meyer lemma

\begin{equation*}
\| \int\int e^{ix(\xi_1+\xi_2)}\sigma_{lh}(\xi_1,\xi_2)\frac{1}{(i\xi_1)^3|\xi_2|^s}\widehat{\px^3u}(\xi_1)\widehat{D^sv}(\xi_2) d\xi_1d\xi_2\|_{L^2}
 \lesssim \|\px^3 u\|_{L^\infty}\|D^sv\|_{L^2}.
\end{equation*}

Next, we turn to the high-low paraproduct $\sigma_{hl} $ (i.e. $|\xi_1| > 3|\xi_2|)$ . In this case we estimate each of four terms in $\sigma_{hl}$ separately. The first term in $\sigma_{hl}$ (multiplying by $\frac{1}{(i\xi_2)^3|\xi_1|^s}$) can be written as 
$$
 \frac{i^3\xi_2^3|\xi_1+\xi_2|^s}{i^3\xi_2^3|\xi_1|^s}\pi_{hl}\left(\frac{\xi_1}{\xi_2}\right) = \frac{\xi_2^3|\xi_1|^s}{\xi_2^3|\xi_1|^s}\Big(1+ \frac{\xi_2}{\xi_1} \Big)^s\pi_{hl}\left(\frac{\xi_1}{\xi_2}\right) =  \Big(1+ \frac{\xi_2}{\xi_1} \Big)^s\pi_{hl}\left(\frac{\xi_1}{\xi_2}\right)
$$
which is smooth and satisfies the condition $\eqref{cm-condition}$. Thus, by the Coifman-Meyer lemma
$$
\| \int\int e^{ix(\xi_1+\xi_2)}\frac{i^3\xi_2^3|\xi_1+\xi_2|^s}{i^3\xi_2^3|\xi_1|^s}\pi_{hl}\left(\frac{\xi_1}{\xi_2}\right)\widehat{D^su}(\xi_1)\widehat{\px^3v}(\xi_2) d\xi_1d\xi_2\|_{L^2}
 \lesssim \|\px^3 v\|_{L^\infty}\|D^su\|_{L^2}.
$$
For other symbol terms $ i^3 \xi_1^k|\xi_2|^s\xi_2^{3-k} , k=0,1,2 $, we divide by $ (i\xi_1)^3|\xi_2|^s $ to obtain
$$   \frac{i^3 \xi_1^k|\xi_2|^s\xi_2^{3-k} }{(i\xi_1^3)^3|\xi_2|^s}\pi_{hl}(\xi_1/\xi_2) = \left(\frac{\xi_2}{\xi_1}\right)^{3-k}\pi_{hl}(\xi_1/\xi_2)$$
for $ k=0,1,2$. Since these symbols satisfy $\eqref{cm-condition}$, we get
$$  \| \int\int e^{ix(\xi_1+\xi_2)}\frac{i^3\xi_1^k|\xi_2|^s\xi_2^k}{i^3\xi_1^3|\xi_2|^s}\pi_{hl}\left(\frac{\xi_1}{\xi_2}\right)\widehat{\px^3u}(\xi_1)\widehat{D^sv}(\xi_2) d\xi_1d\xi_2\|_{L^2}
 \lesssim \|\px^3 u\|_{L^\infty}\|D^sv\|_{L^2}. $$

In the high-high paraproduct case, $ |\xi_1+\xi_2|^s\xi_2^3$ has singularities on $\xi_1 = -\xi_2$ line. So, it's not a smooth Coifman-Meyer multiplier. But this case can be proved using Littlewood-Paley operators.
Let $P_N$ be a Littlewood-Paley projection into frequency $\xi \sim N$ where N is a dyadic number.
It suffices to estimate
$$
\|D^s(u\px^3v)\|_{L^2} \sim \|\sum_N \sum_{N/4 \leq M \leq 4N} D^s(P_NuP_M\px^3v) \|_{L^2}. $$
Set $ \widetilde{P}_N := \sum_{N/4 \leq M \leq 4M} P_M $.
Using the Littlewood-Paley inequality and $ \sum_N \widetilde{P}_N = 5 $
\begin{align*}
\|D^s(u\px^3v)\pi_{hh}\|_{L^2}   & \sim \|\sum_N D^s(P_NuP_N\px^3v)  \|_{L^2} \\
    & \lesssim \sum_j \|\sum_N \widetilde{P}_{2^{-j}N}[ D^s(\widetilde{P}_Nu\widetilde{P}_N\px^3v)]\|_{L^2}\\
    & \lesssim \sum_j \Big(\sum_N \|\widetilde{P}_{2^{-j}N}[ D^s(\widetilde{P}_Nu\widetilde{P}_N\px^3v)] \|_{L^2}^2  \Big)^{1/2}\\
    & \lesssim_s \sum_j \Big(\sum_N \| (2^{-j}N)^s \widetilde{P}_Nu\widetilde{P}_N\px^3v) \|_{L^2}^2  \Big)^{1/2}\\
    & \lesssim \sum_j 2^{-js} \Big(\sum_N N^s\|\widetilde{P}_Nu\|^2_{L^2}\|\px^3v\|^2_{L^\infty}  \Big)^{1/2}\\
    & \lesssim_s \|\px^3v\|_{L^\infty}\|D^su\|_{L^2}.
\end{align*}

The other terms in the high-high paraproduct, $ \left(\frac{\xi_2}{\xi_1}\right)^{3-k}\pi_{hh}(\xi_1/\xi_2)$, are Coifman-Meyer multipliers and can be estimated as high-low case.

\end{proof}

\section{modified energy}\label{menergy}
In this section, we prove an energy estimate for local solutions. We want to control the time increment of $ \|D^su(t)\|_{L^2_x} $ using itself and other norms of the same size. But due to multi-derivatives in the nonlinear terms (especially, $u\px^3u$) the standard energy method (combining with the commutator estimate in the fractional derivative case) gives only
$$  \frac{d}{dt}\|D^su(t)\|^2_{L^2_x} \lesssim \|\px^3u\|_{L^\infty_x}\|D^su(t)\|_{L^2_x} + \left|\int \px u D^s\px u D^s\px u \right|.  $$
The last term of which is not favorable.
We use some modification of the energy in order to cancel out the last term.

\begin{proposition}\label{energy}
Let $s\geq 1$ and $u(t,x)$ be a Schwartz solution to the equation \eqref{fifth}. Define the modified energy
$$ E_s(t):= \|D^su(t)\|_{L^2}^2 + \|u(t)\|_{L^2}^2 + a_s\int
u(t)D^{s-2}\partial_xu(t)D^{s-2}\partial_xu(t). $$
Then, there exist constants $a_s$, $C$ and $C'$ so that if $\|u(t)\|_{H^s_x} \leq \min(\frac{1}{2},\frac{1}{2a_sC})$, then
\begin{gather}
\label{energy31}      \frac{1}{2}\|u\|^2_{H^s}  \leq   E_s \leq  \frac{3}{2}\|u\|^2_{H^s} \\
\frac{d}{dt}E_s(t)  \lesssim_s  \|\partial_x^3u\|_{L^\infty} E_s(t)   \label{energy1} 
\end{gather}
and so if $\|u\|_{L^\infty_TH^s_x} \leq \min(\frac{1}{2},\frac{1}{2a_s})$, then
\begin{equation}\label{energybound}
\sup_{[0,t]} \|J^su\|_{L^2_x}  \leq  3\, e^{C'\int^t_0 \|\px^3u(t')\|_{L^\infty_x}dt' } \|J^su(0)\|_{L^2_x}.
\end{equation}

\end{proposition}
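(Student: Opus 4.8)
The plan is to prove the three displays in turn, the last two carrying the substance. For \eqref{energy31}, set $\phi:=D^{s-2}\px u$, so that $\|\phi\|_{L^2}=\|D^{s-1}u\|_{L^2}$ and, by the Sobolev embedding $H^s\hookrightarrow L^\infty$ ($s\ge1$), the correctional term satisfies $\bigl|a_s\int u\,\phi^2\bigr|\le|a_s|\,\|u\|_{L^\infty}\|D^{s-1}u\|_{L^2}^2\lesssim_s|a_s|\,\|u\|_{H^s}^3$. Under the smallness hypothesis this is $\le\tfrac12\|u\|_{H^s}^2$, so $E_s$ differs from $\|D^su\|_{L^2}^2+\|u\|_{L^2}^2$ by at most $\tfrac12\|u\|_{H^s}^2$, and \eqref{energy31} follows. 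Granting \eqref{energy1}, Gronwall gives $E_s(t)\le E_s(0)\exp\!\bigl(C\int_0^t\|\px^3u(t')\|_{L^\infty}\,dt'\bigr)$ on the interval where the smallness persists, and feeding this back through \eqref{energy31} gives \eqref{energybound}.

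So the work is in \eqref{energy1}. First I would differentiate the unmodified part. The $\px^5$-contributions to $\tfrac{d}{dt}\bigl(\|D^su\|_{L^2}^2+\|u\|_{L^2}^2\bigr)$ vanish because $\px^5$ is skew-adjoint. For the nonlinear part of $\tfrac{d}{dt}\|D^su\|_{L^2}^2$, apply the generalized commutator estimate (Lemma~\ref{commutator}) to $D^s(u\px^3u)$ and to $D^s(\px u\px^2u)$, writing each as its leading Leibniz terms — with $D^s\px^k u=\px^k D^su$ — plus a remainder of $L^2$-norm $\lesssim_s\|\px^3u\|_{L^\infty}\|D^su\|_{L^2}$, whose pairing with $D^su$ is harmless. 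With $w:=D^su$, the surviving leading terms are integrals $\int(\mathrm{const})\,w\,\px^ju\,\px^{2-j}w$; integrating by parts repeatedly, each collapses to a multiple of $\int\px^3u\,w^2$ (bounded by $\|\px^3u\|_{L^\infty}\|w\|_{L^2}^2$, acceptable) except for one term $\lambda_s\int\px u\,(\px w)^2$, $\lambda_s=2c_1+(2s-3)c_2$. Since $\px w=\px D^su$ is one derivative above $H^s$, this term is the obstruction, and it is precisely what the correction must remove.

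Next I would differentiate $F:=a_s\int u\,\phi^2$, using $\px\phi=-D^su=-w$ (the symbol of $\px D^{s-2}\px$ is $-|\xi|^s$), $u_t=-\px^5u-N$ and $\phi_t=-\px^5\phi-D^{s-2}\px N$ with $N=c_1\px u\px^2u+c_2u\px^3u$. In the linear part $-a_s\int(\px^5u\,\phi^2+2u\phi\,\px^5\phi)$, Leibniz-expanding $\px^5(u\phi)$ and integrating by parts repeatedly turns everything into $5a_s\int\px u\,(\px w)^2-5a_s\int\px^3u\,w^2$; choosing $a_s:=-\tfrac15\lambda_s$ (and $a_s=0$ if $\lambda_s=0$) cancels the bad term exactly, leaving only the acceptable $\int\px^3u\,w^2$. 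The nonlinear part of $\dot F$ carries an extra factor of $u$ or of $D^{s-1}u$ in place of a $D^su$, hence is of strictly lower order: after integration by parts, Lemma~\ref{commutator}, and re-substituting the equation wherever a $\px^5u$ reappears, it is bounded by $\lesssim_s\|\px^3u\|_{L^\infty}\|u\|_{H^s}^2$. Combined with $\tfrac{d}{dt}\|u\|_{L^2}^2$ (which reduces to $\int(\px u)^3$, estimated by placing the lowest-frequency factor in $L^\infty$ and using the smallness of $\|u\|_{H^s}$), this gives $\tfrac{d}{dt}E_s\lesssim_s\|\px^3u\|_{L^\infty}\|u\|_{H^s}^2\lesssim_s\|\px^3u\|_{L^\infty}E_s$ by \eqref{energy31}.

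I expect the bookkeeping in these two middle steps to be the main obstacle. One has to verify that, after the one designed cancellation, \emph{every} integral produced by $\tfrac{d}{dt}E_s$ can be manipulated — via the identity $\px\phi=-D^su$, the generalized commutator estimate, and repeated integration by parts (trading $\px^5u$ for $\partial_t u$ through the equation) — into a form with at most three derivatives on a single factor placed in $L^\infty$ and the remaining $2s$ derivatives split evenly into two $L^2$-factors comparable to $D^su$. Checking that exactly one bad structure, $\int\px u\,(\px w)^2$, survives — so that one constant $a_s$ suffices — is the delicate point.
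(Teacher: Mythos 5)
Your proposal follows the paper's proof essentially verbatim: apply the generalized commutator estimate (Lemma~\ref{commutator}) to $\frac{d}{dt}\|D^su\|_{L^2}^2$, integrate by parts to isolate the single obstruction $\int\px u(\px D^su)^2$, then differentiate the correction $a_s\int u(D^{s-2}\px u)^2$ and use its linear contribution (correctly computed as $5a_s\int\px u(\px w)^2 - 5a_s\int\px^3u\,w^2$, matching the paper's $\alpha_1=0$, $\alpha_3=5$) to cancel it by the choice $a_s=-\lambda_s/5$. The remaining quartic terms and the $\|u\|_{L^2}^2$ piece are then estimated as in the paper, so this is the same argument, just with the coefficients written out explicitly rather than abstractly.
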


\begin{proof}
  Using Holder inequality and Sobolev embedding the third term of $E_s$ is bounded by $ C\cdot a_s\|u\|_{H^s}^3 $
We have
\begin{equation*}
\frac{1}{2}\|u(t)\|_{H^s_x}^2  \leq \, E_s(t) \, \leq \frac{3}{2}\|u(t)\|_{H^s_x}^2.
 \end{equation*}
To prove \eqref{energy1} one we take $ D^s\partial_x^2$ derivative on the equation $\eqref{fifth} $ and integrate against $D^s\partial_x^2u$.
\begin{equation*}
 \frac{1}{2}\frac{d}{dt}\|D^su\|^2 + c_1 \int D^s(u\partial_x^3u)D^su
  + c_2\int D^s(\partial_xu\partial_x^2u)D^su = 0.
\end{equation*}
To use the commutator estimate we add and subtract terms.

\begin{align*}
\frac{1}{2} & \frac{d}{dt}\|D^su\|^2 \leq \\
 & c_1\int \Big[ D^s (u\px^3u) - uD^{s-2}\px^5u - s\px uD^{s-2}\px^4u -\frac{s(s-1)}{2} \px^2uD^{s-2}\px^3u \Big]D^su \\
 &+ c_2\int \Big[ D^s(\px u\px^2u) - \px uD^{s-2}(\px^4u) - s\px^2uD^s(\px^3 u) \Big]D^su\\
 &+  \int \Big[ c_3 uD^{s-2}\px^5u + c_4 \px uD^{s-2}\px^4u  + c_5 \px^2uD^{s-2}\px^3u \Big]D^su\\
 & =: \text{I} + \text{II} + \text{III}.
\end{align*}
By Lemma~\ref{commutator}
\begin{equation*} \text{I} + \text{II} \lesssim \|\px^3u\|_{L^\infty}\|D^su\|^2_{L^2}.  \end{equation*}
After some integrations by parts we have
\begin{align*}
\text{III}  &=  \int \Big[ c_3 uD^{s-2}\px^5u + c_4 \px uD^{s-2}\px^4u  + c_5 \px^2uD^{s-2}\px^3u \Big]D^su \\
     &= d_1 \int \px^3uD^s u D^{s-2} \px^2 u + d_2\int \px uD^{s-2}\px^3 u D^{s-2} \px^3 u.
\end{align*}
On the other hand,
\begin{align*}
\frac{d}{dt} &  \int uD^{s-2}\partial_xuD^{s-2}\partial_xu \\
   &= \int u_t D^{s-2}\partial_xuD^{s-2}\partial_xu +
\int uD^{s-2}\partial_xu_tD^{s-2}\partial_xu\\
 &= -\int\partial_x^5uD^{s-2}\partial_xuD^{s-2}\partial_x u  -2\int uD^{s-2}\px^6 u D^{s-2}\px u  \\
 & -c_1\int u\px^3uD^{s-2}\px uD^{s-2}\px u + 2uD^{s-2}[\px u\px^3 u]D^{s-2}\px u + 2uD^{s-2}[u\px^4u]D^{s-2}\px u \\
 & -c_2\int \px u\px^2uD^{s-2}\px uD^{s-2}\px u + 2uD^{s-2}[\px^2u\px^2u]D^{s-2}\px u + 2uD^{s-2}[\px u\px^3u]D^{s-2}\px u\\
 &=:  A + B
\end{align*}
where $ A $ is a linear combination of terms of degree of $3$ (there are three $u$'s) and $B$ is a linear combination of terms of degree $4$. We use integrations by parts to change $A$ to a linear combination of the following three terms.
\begin{align*}
A & = -\int\partial_x^5uD^{s-2}\partial_xuD^{s-2}\partial_x u  -2\int uD^{s-2}\px^6 u D^{s-2}\px u \\
  & = \alpha_1 \int \px^5uD^{s-2}\px u D^{s-2} \px u + \alpha_2 \int \px^3uD^s u D^{s-2} \px^2 u + \alpha_3 \int \px^1uD^{s-2}\px^3 u D^{s-2} \px^3 u.
\end{align*}
But by direct computation one can easily check $ \alpha_1 =0$. One can choose $a_s$  so that $ a_s\cdot\alpha_3 + d_2 = 0 $. Then,
$$ \text{III} + A = (d_1 + a_s\alpha_2) \int \px^3uD^s u D^{s-2} \px^2 u \lesssim \|\px^3u\|_{L^\infty_x}\|D^su\|_{L^2_x}.$$
Now we estimate $B$. First of all, one can easily observe (using Sobolev embedding and $\|u(t)\|_{H^s_x} \leq \frac{1}{2}$)
\begin{equation*}
\left|\int u\px^3uD^{s-2}\px uD^{s-2}\px u\right|  + \left|\int \px u\px^2uD^{s-2}\px uD^{s-2}\px u\right| \lesssim \|\px^3u\|_{L^\infty_x}^2\|J^su\|_{L^2_x}^2.
\end{equation*}
To estimate other terms we use the commutator estimate \eqref{K-P}.
\begin{align*}
\int uD^{s-2}[\px u\px^3 u]D^{s-2}\px u = \int \Big[D^{s-2}(\px u\px^3 u) - \px uD^{s-2}\px^3u \Big]D^{s-2}\px u \\
+ u\px u D^{s-2}\px^3 uD^{s-2}\px u.
\end{align*}
From the commutator estimate and integrations by parts these terms are bounded by
$$  O\left(\|\px^3u\|_{L^\infty_x}\|J^su\|_{L^2_x}^2\right).  $$
Other terms' bounds follow in the same way.
Getting everything together and using Gronwall's inequality we have shown
\begin{gather*}
\frac{d}{dt}E_s(t) \lesssim_s \|\px^3u(t)\|_{L^\infty_x}E_s(t), \\
E_s(t) \leq e^{\int^t_0 \|\px^3u(t')\|_{L^\infty_x}dt'} E_s(0). \\
\end{gather*}
By \eqref{energy31}, \eqref{energybound} follows.
\end{proof}

\section{linear estimate and local smoothing}\label{linearestimate}
In this section we provide the linear estimates and a local smoothing for nonlinear solutions.

\begin{lemma}[Strichartz estimate, \cite{KPV93}]\label{strichartz}
Let $u_0 \in L^2_x$.
\begin{equation}
\| D^\alpha e^{-t\partial_x^5} u_0 \|_{L^q_tL^r_x} \lesssim \|u_0\|_{L^2_x}
\end{equation}

for $-\alpha + \frac{5}{q} + \frac{1}{r} = \frac{1}{2} $, $ 0 \leq \alpha \leq 3/q $ and $ 2 \leq q,r \leq \infty$.
\end{lemma}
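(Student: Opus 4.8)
The plan is to reduce this two-parameter family of estimates, by interpolation, to three special cases: the $L^2$ identity $\|e^{-t\px^5}f\|_{L^\infty_tL^2_x}=\|f\|_{L^2_x}$ (immediate, since $e^{-t\px^5}$ is a Fourier multiplier of modulus one), together with the two endpoints
\[ \|D^{3/4}e^{-t\px^5}f\|_{L^4_tL^\infty_x}\lesssim\|f\|_{L^2_x}, \qquad\text{and}\qquad \|e^{-t\px^5}f\|_{L^{10}_tL^\infty_x}\lesssim\|f\|_{L^2_x}. \]
A direct computation shows that the set of exponents $(\tfrac1q,\tfrac1r)$ cut out by the hypotheses $0\le\alpha\le\tfrac3q$, $-\alpha+\tfrac5q+\tfrac1r=\tfrac12$, $2\le q,r\le\infty$ is exactly the triangle with vertices $(0,\tfrac12)$, $(\tfrac14,0)$, $(\tfrac1{10},0)$, and that on this triangle $\alpha=\tfrac5q+\tfrac1r-\tfrac12$ is the affine interpolant of the values $0,\tfrac34,0$ taken at those vertices. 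Since $z\mapsto D^{z}$ is an admissible analytic family of operators, Stein interpolation (iterated) then reduces the lemma to the three vertex estimates.

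The two nontrivial vertices rest on the frequency-global dispersive bound
\[ \|D^a e^{-t\px^5}f\|_{L^\infty_x}\lesssim|t|^{-(a+1)/5}\|f\|_{L^1_x},\qquad 0\le a\le\tfrac32. \]
To prove this, write $D^ae^{-t\px^5}f=K^{(a)}_t*f$ with $K^{(a)}_t(x)=c\int|\xi|^ae^{i(x\xi+t\xi^5)}\,d\xi$ (the sign in front of $t\xi^5$ is irrelevant), and rescale $\xi\mapsto|t|^{-1/5}\xi$ to obtain $K^{(a)}_t(x)=|t|^{-(a+1)/5}B_a\bigl(x|t|^{-1/5}\bigr)$, where $B_a(y)=c\int|\xi|^ae^{i(y\xi\pm\xi^5)}\,d\xi$; it then suffices to show $\sup_y|B_a(y)|<\infty$. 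Convergence of the integral follows from integration by parts near $\xi=\pm\infty$, where $|\partial_\xi(\xi^5)|=5|\xi|^4$ is large and the weight $|\xi|^a$ with $a<4$ is harmless. Uniform boundedness in $y$ is a van der Corput estimate: the phase $\phi(\xi)=y\xi\pm\xi^5$ has $\phi''(\xi)=\pm20\xi^3$, of size $|y|^{3/4}$ near the stationary points $5\xi^4=\mp y$ (which lie away from the origin when $y\neq0$), so stationary phase there contributes $O\bigl(|y|^{a/4-3/8}\bigr)$, nonstationary phase (integration by parts) gives rapid decay elsewhere, and for $y$ in a bounded set the integral is trivially finite; since $a\le\tfrac32$ one has $|y|^{a/4-3/8}\lesssim1$, which gives the claim.

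Granting the dispersive bound, the vertex $(q,r,\alpha)=(4,\infty,\tfrac34)$ follows by a $TT^*$ argument. Writing $S:=D^{3/4}e^{-t\px^5}$, one computes $SS^*F(t)=\int_{\mathbb{R}} D^{3/2}e^{-(t-s)\px^5}F(s)\,ds$, so the dispersive bound with $a=\tfrac32$ gives $\|SS^*F(t)\|_{L^\infty_x}\lesssim\int|t-s|^{-1/2}\|F(s)\|_{L^1_x}\,ds$; since $\tfrac12=\tfrac2q$ with $q=4$, the one-dimensional Hardy--Littlewood--Sobolev inequality yields $\|SS^*F\|_{L^4_tL^\infty_x}\lesssim\|F\|_{L^{4/3}_tL^1_x}$, and the $TT^*$ duality then gives $\|Sf\|_{L^4_tL^\infty_x}\lesssim\|f\|_{L^2_x}$. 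The vertex $(10,\infty,0)$ is obtained identically with $a=0$: from $\|e^{-t\px^5}f\|_{L^\infty_x}\lesssim|t|^{-1/5}\|f\|_{L^1_x}$ and Hardy--Littlewood--Sobolev (now with $\tfrac15=\tfrac2{10}$) one gets $\|\int e^{-(t-s)\px^5}F(s)\,ds\|_{L^{10}_tL^\infty_x}\lesssim\|F\|_{L^{10/9}_tL^1_x}$, hence $\|e^{-t\px^5}f\|_{L^{10}_tL^\infty_x}\lesssim\|f\|_{L^2_x}$. Interpolating the three vertex estimates finishes the proof.

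The main obstacle is the oscillatory integral estimate $\sup_y|B_a(y)|<\infty$: one has to split the $\xi$-integral into its stationary and nonstationary ranges, absorb the polynomial weight $|\xi|^a$, and---most delicately---handle the window near $y=0$, where the phase $\xi\mapsto\xi^5$ is genuinely degenerate (four derivatives vanish at $\xi=0$), though there the estimate is in fact easy because the phase has no stationary point for $y\neq0$ small. A careful treatment of exactly this point, and of the resulting mixed-norm estimates, is in \cite{KPV93}, which I would cite for the details. The threshold $a\le\tfrac32$---equivalently the endpoint regularity $\alpha\le\tfrac34$, which after interpolation turns into $\alpha\le\tfrac3q$---is precisely the range in which the stationary-phase gain $|y|^{a/4-3/8}$ still produces a bounded function; the conditions $q,r\ge2$ enter only to make the interpolation endpoints legitimate. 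Everything else (the $TT^*$ identities, Hardy--Littlewood--Sobolev, Stein interpolation) is routine bookkeeping.
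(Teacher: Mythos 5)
The paper states this lemma without proof, attributing it to \cite{KPV93}, so there is no in-paper proof to compare against; your argument is correct and reproduces exactly the standard route of that reference --- the rescaled oscillatory-integral bound on the dispersive kernel $D^a e^{-t\px^5}$ (with the threshold $a\le 3/2$ coming from stationary phase near $|\xi|\sim|y|^{1/4}$), a $TT^*$ argument with one-dimensional Hardy--Littlewood--Sobolev yielding the two $r=\infty$ vertices $(q,\alpha)=(4,3/4)$ and $(10,0)$, and iterated Stein interpolation against the trivial $L^\infty_tL^2_x$ identity to cover the triangle of exponents. One small imprecision worth flagging: for small $y\neq 0$ of the sign matching the phase there \emph{is} a stationary point of $y\xi\pm\xi^5$ near $\xi=0$, so the correct reason the small-$|y|$ regime is harmless is not the absence of stationary points but (as you also say) that the integral is trivially bounded there by splitting at $|\xi|=1$ and integrating by parts in the tail; this does not affect your conclusion.
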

The following proposition is a refined version of the Strichartz estimate, introduced by Koch and Tzvetkov \cite{koch-tzvetkov1} and improved by Kenig and Koenig \cite{kenig-koenig} in the context of the Benjamin-Ono equation.

\begin{proposition}[Refined Strichartz estimate, \cite{kenig-koenig}]\label{refinedstrichartz}
Let $ T \leq 0$ and $ 0 \leq \alpha  $.
Let $u$ be a Schwartz solution to the linear fifth order equation $ \partial_tu + \partial_x^5u + F = 0$.
For any $\epsilon > 0$,
\begin{equation}\label{restri}
\| \px^3u\|_{L^2_TL^\infty_x} \lesssim_\alpha \|D^{3-\frac{3}{4}+\frac{\alpha}{4}+\epsilon}u \|_{L^\infty_TL^2_x} + \|D^{3-\frac{3}{4}-\frac{3\alpha}{4}+\epsilon}F \|_{L^2_TL^2_x} + \|u\|_{L^2_TL^2_x} + \|F\|_{L^2_TL^2_x}
\end{equation}

\end{proposition}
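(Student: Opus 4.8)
The plan is to combine a Littlewood--Paley decomposition with the frequency-dependent time-chopping device of Koch--Tzvetkov and Kenig--Koenig, fed by a single fixed Strichartz pair coming from Lemma~\ref{strichartz}. Throughout we use $T\lesssim 1$ (the hypothesis ``$T\le 0$'' is evidently a misprint for $T\le 1$). Write $u=\sum_N P_Nu$ with $N$ dyadic, where $P_N$ is a Littlewood--Paley projection; since $P_N$ commutes with $\partial_t$ and $\px$, each $u_N:=P_Nu$ solves the linear equation with forcing $P_NF$. For the low frequencies $N\lesssim 1$, Bernstein's inequality gives $\|\px^3P_{\le 1}u\|_{L^2_TL^\infty_x}\lesssim \|P_{\le 1}u\|_{L^2_TL^2_x}\lesssim\|u\|_{L^2_TL^2_x}$, which is one of the terms on the right of \eqref{restri}; so it suffices to treat each high-frequency piece $u_N$, $N\gtrsim 1$, and sum in $N$ at the end.

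Fix such an $N$ and partition $[0,T]$ into $J\lesssim N^\alpha$ consecutive intervals $I_j=[a_j,a_{j+1}]$ of length $\delta:=N^{-\alpha}$ (using $T\le 1$). On each $I_j$, Duhamel's formula reads
$$ u_N(t)=e^{-(t-a_j)\px^5}u_N(a_j)-\int_{a_j}^t e^{-(t-t')\px^5}P_NF(t')\,dt',\qquad t\in I_j. $$
The two ingredients are: first, on a single Littlewood--Paley block one can write $\px^3 P_N = N^{\,3-\frac34}\,m_N(D)\,D^{\frac34}$, where $m_N(D)$ is convolution against an $L^1$ kernel of mass $\mathcal O(1)$, hence bounded on $L^\infty_x$ uniformly in $N$; second, Lemma~\ref{strichartz} with the admissible triple $(q,r,\alpha)=(4,\infty,\tfrac34)$ (this is the endpoint $\alpha=3/q$, and $-\tfrac34+\tfrac54=\tfrac12$) gives $\|D^{3/4}e^{-t\px^5}g\|_{L^4_tL^\infty_x}\lesssim\|g\|_{L^2_x}$. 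Combining these with Hölder in time on an interval of length $\delta$ (so $\|\cdot\|_{L^2_{I_j}}\le\delta^{1/4}\|\cdot\|_{L^4_{I_j}}$), and Minkowski's inequality plus one more Hölder for the Duhamel integral, yields the per-interval bound
$$ \|\px^3 u_N\|_{L^2_{I_j}L^\infty_x}\lesssim \delta^{1/4}N^{9/4}\|u_N(a_j)\|_{L^2_x} + \delta^{3/4}N^{9/4}\|P_NF\|_{L^2_{I_j}L^2_x}. $$

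Squaring and summing over $j$: the homogeneous contribution is $\le\delta^{1/2}N^{9/2}\cdot J\cdot\|u_N\|_{L^\infty_TL^2_x}^2\lesssim N^{9/2+\alpha/2}\|u_N\|_{L^\infty_TL^2_x}^2$ (using $\delta=N^{-\alpha}$, $J\lesssim N^\alpha$), while the inhomogeneous contribution telescopes to $\delta^{3/2}N^{9/2}\|P_NF\|_{L^2_TL^2_x}^2=N^{9/2-3\alpha/2}\|P_NF\|_{L^2_TL^2_x}^2$. Thus
$$ \|\px^3 u_N\|_{L^2_TL^\infty_x}\lesssim N^{3-\frac34+\frac\alpha4}\|u_N\|_{L^\infty_TL^2_x}+N^{3-\frac34-\frac{3\alpha}4}\|P_NF\|_{L^2_TL^2_x}. $$
Summing over dyadic $N\gtrsim 1$ and inserting an $N^{-\epsilon}$ via Cauchy--Schwarz ($\sum_N N^{a}\|P_Ng\|\lesssim_\epsilon\|D^{a+\epsilon}g\|$, and $\lesssim\|g\|$ when $a\le0$) converts the two pieces into $\|D^{3-\frac34+\frac\alpha4+\epsilon}u\|_{L^\infty_TL^2_x}$ and $\|D^{3-\frac34-\frac{3\alpha}4+\epsilon}F\|_{L^2_TL^2_x}$ (the $\|F\|_{L^2_TL^2_x}$ term absorbing a non-positive exponent), which together with the low-frequency bound gives \eqref{restri}.

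The conceptual heart is just the time-chopping: breaking into $\sim N^\alpha$ windows of length $N^{-\alpha}$ trades a factor $N^{\alpha/2}$ from counting windows against the gain from running Strichartz on a short interval, and the trade is balanced precisely so as to land on the exponent $3-\frac34+\frac\alpha4$; the Duhamel term, being handled with an $L^1_{t'}$-in-time estimate, picks up the better exponent $3-\frac34-\frac{3\alpha}4$. The remaining work is bookkeeping --- checking that $\px^3$ on a Littlewood--Paley block is the harmless operator $N^{9/4}m_N(D)D^{3/4}$, verifying the admissibility of $(4,\infty,\tfrac34)$ against the constraints of Lemma~\ref{strichartz}, and the $\epsilon$-loss in the final frequency sum --- none of which presents a genuine obstacle.
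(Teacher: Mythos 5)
Your proposal is correct and follows essentially the same route as the paper: Littlewood--Paley decomposition, chopping $[0,T]$ into $\sim N^{\alpha}$ windows of length $N^{-\alpha}$, the Strichartz estimate of Lemma~\ref{strichartz} plus H\"older in time on each window, and square-summation over the windows and the dyadic blocks. The only (cosmetic) difference is that you invoke the endpoint pair $(q,r)=(4,\infty)$ with $D^{3/4}$ directly and put the $\epsilon$-loss entirely into the dyadic sum, whereas the paper first applies Sobolev embedding $L^r_x\hookrightarrow L^\infty_x$ and runs the same argument with a finite $r$, letting $r\to\infty$ absorb the loss; both are legitimate given the lemma as stated.
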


\begin{remark}
In our analysis, the optimal choice is $\alpha=1 $. With this \eqref{restri} gives
\begin{equation}\label{restri3}
 \|\partial_x^3u\|_{L^1_TL^\infty_x} \lesssim \|J^s u\|_{L^\infty_TL^2_x} + \|J^{s-1}F\|_{L^2_TL^2_x}
\end{equation}
for $s>\frac{5}{2}$, which determines the regularity threshold of our analysis.

\end{remark}

\begin{proof}
 The proof is very similar to that of the Benjamin-Ono equation given in \cite{kenig-koenig}. Unlike the BO equation, from the global smoothing effect of the Strichartz estimate we gain $\frac{3}{4}$ derivatives. We provide the proof for the reader's convenience.\\
Let $P_N$ be a Littlewood-Paley operator where N is a dyadic number and denote $ P_Nu=u_N$. By Sobolev embedding and the Littlewood-Paley inequality
\begin{align*}
\|\px^3u\|_{L^2_TL^\infty_x} & \lesssim_{\epsilon',r} \|J^{\epsilon '}\px^3u\|_{L^2_TL^r_x} \\
           & \sim \|\big( \sum_N|J^{\epsilon '}\px^3u_N|^2 \big)^{1/2} \|_{L^2_TL^r_x} \\
           & \leq \Big(\sum_N \|J^{\epsilon '}\px^3u_N\|^2_{L^2_TL^r_x} \Big)^{1/2}
\end{align*}
where $\epsilon'$  and $ r ( > 1/\epsilon') $ to be chosen later.\\
It suffices to show for $r>2$
\begin{equation}\label{restri2}
\|\px^3u_N\|_{L^2_TL^r_x} \lesssim \|D^{3-\frac{3}{4} +\frac{\alpha}{4}-(\alpha-3)\frac{1}{2r} }u_N\|_{L^\infty_TL^2_x} +
       \|D^{3-\frac{3}{4} -\frac{3\alpha}{4} -(\alpha-3)\frac{1}{2r}}F_N\|_{L^2_TL^2_x}
\end{equation}

for $ N= 2^k , k \geq 1 $. (The case $k=0$ is handled by $\|u\|_{L^\infty_TL^2_x} + \|F\|_{L^2_TL^2_x}$ by the Strichartz estimate and Bernstein's inequality).\\

Now, we chop out the time interval into subintervals of unit length. Let $[0,T]= \cup_j I_j, I_j=[a_j,b_j]$ with $ |I_j| \sim N^{-\alpha} $.
Then the number of j's is $O(TN^\alpha)$ \\
Let $q $ be so that $-\frac{3}{q} + \frac{5}{q} + \frac{1}{r} = \frac{1}{2}$. Using Lemma \ref{strichartz} and $\partial_tu_N + \px^5u_N + F_N = 0 $
\begin{align*}
&\|\px^3u_N\| _{L^2_TL^r_x} \\
& = \Big(\sum_j \|\px^3u_N\|^2_{L^2_{I_j}L^r_x}  \Big)^{1/2}  \\
      & \leq N^{-\alpha(\frac{1}{2} -\frac{1}{q})} \Big( \sum_j \|\px^3u_N\|^2_{L^2_{I_j}L^r_x} \Big)^{1/2} \qquad (\because |I_j| \sim  N^{-\alpha})\\
      & \lesssim  N^{-\alpha(\frac{1}{2} -\frac{1}{q})} \sum_j \Big(\| e^{-(t-a_j)\partial^5_x}\px^3u_N(a_j)\|^2_{L^q_{I_j}L^r_x} +
        \| \int^t_{a_j} e^{-(t-t')\px^5}\px^3F_N(t')dt' \|^2_{L^q_{I_j}L^r_x}\Big)^{1/2} \\
      & \lesssim  N^{-\alpha(\frac{1}{2} -\frac{1}{q})} \Big\{ \Big(\sum_j \|D^{-\frac{3}{q}}\px^3u_N\|^2_{L^\infty_TL^2_x} \Big)^{1/2}
      + \Big( \sum_j \Big( \int_{I_j} \|D^{-\frac{3}{q}}\px^3F_N\|_{L^2_x} dt \Big)^2 \Big)^{1/2}   \Big\} \\
      & \lesssim  N^{-\alpha(\frac{1}{2} -\frac{1}{q})} \Big\{ \Big(\sum_j \|D^{-\frac{3}{q}}\px^3u_N\|^2_{L^\infty_TL^2_x} \Big)^{1/2}
      + \Big( \sum_j N^{-\alpha} \int_{I_j} \|D^{-\frac{3}{q}}\px^3F_N\|^2_{L^2_x} dt \Big)^{1/2}   \Big\} \\
      & \lesssim  N^{-\alpha(\frac{1}{2} -\frac{1}{q})}N^{\frac{\alpha}{2}}\| D^{3-\frac{3}{q}}u_N\|_{L^\infty_TL^2_x} +
          N^{-\alpha+\frac{\alpha}{q}} \Big(\int^T_0 \|D^{3-\frac{3}{q}} F_N\|^2_{L^2_x} dt \Big)^{1/2} \\
      & \lesssim  \|D^{3-\frac{3}{q} +\frac{\alpha}{q}}u_N\|_{L^\infty_TL^2_x} +
       \|D^{3-\frac{3}{q} -\alpha +\frac{\alpha}{q}}u_N\|_{L^2_TL^2_x}\\
      & \lesssim  \|D^{3-\frac{3}{4} +\frac{\alpha}{4}-(\alpha-3)\frac{1}{2r} }u_N\|_{L^\infty_TL^2_x} +
       \|D^{3-\frac{3}{4} -\frac{3\alpha}{4} -(\alpha-3)\frac{1}{2r}}F_N\|_{L^2_TL^2_x}.
\end{align*}
At the last step we used $ \frac{1}{q} =\frac{1}{4} -\frac{1}{2r} $. Once we get $\eqref{restri2}$, for given $\epsilon>0$, $\eqref{restri}$ follows by choosing $\epsilon'$ and $r $ so that $ \epsilon'-\frac{\alpha-3}{2r} < \epsilon$.

\end{proof}

Next, we state the maximal estimate for linear solutions proved by Kenig, Ponce and Vega \cite{KPV91}.

\begin{lemma}[Maximal estimate]\label{maximal}
Assume $u_0 \in H^{5/4+\eta} $ for some $\eta > 0$.
Then
$$ \| e^{-t\px^5}u_0 \|_{L^2_xL^\infty_T} \lesssim \Big(\sum_j \|e^{-t\px^5} u_0 \|^2_{L^\infty([0,T]\times[j,j+1))}  \Big)^{1/2}
  \lesssim_{T,\eta} \|u_0\|_{H^{5/4+\eta}}. $$
\end{lemma}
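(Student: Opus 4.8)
The first inequality needs no work: if $x\in[j,j+1)$ then $\sup_{t\in[0,T]}|e^{-t\px^5}u_0(t,x)|$ is at most the supremum of $e^{-t\px^5}u_0$ over the whole box $[0,T]\times[j,j+1)$, so
\[
\|e^{-t\px^5}u_0\|_{L^2_xL^\infty_T}^2=\sum_j\int_j^{j+1}\sup_{t\in[0,T]}|e^{-t\px^5}u_0|^2\,dx\le\sum_j\|e^{-t\px^5}u_0\|_{L^\infty([0,T]\times[j,j+1))}^2 .
\]
So the content is the second inequality, which I would establish by the Kenig--Ponce--Vega oscillatory-integral method. By density take $u_0$ Schwartz and split $u_0=P_{\le1}u_0+\sum_{N\ge2}P_Nu_0$ into Littlewood--Paley pieces. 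Since $\|\cdot\|_{L^2_xL^\infty_T}$ is subadditive it suffices to prove the single-block bound $\|e^{-t\px^5}P_Nu_0\|_{L^2_xL^\infty_T}\lesssim_T N^{5/4}\|P_Nu_0\|_{L^2}$ for every $N\ge1$, together with $\|e^{-t\px^5}P_{\le1}u_0\|_{L^2_xL^\infty_T}\lesssim_T\|u_0\|_{L^2}$; the latter follows at once from the fact that $\px^5(e^{-t\px^5}P_{\le1}u_0)$ is bounded in $L^2_x$ by $\|P_{\le1}u_0\|_{L^2}$ and the fundamental theorem of calculus in $t$. Granting the single-block bound, Cauchy--Schwarz in $N$ gives $\sum_{N\ge2}N^{5/4}\|P_Nu_0\|_{L^2}\lesssim(\sum_N N^{-2\eta})^{1/2}(\sum_N N^{5/2+2\eta}\|P_Nu_0\|_{L^2}^2)^{1/2}\lesssim_{\eta}\|u_0\|_{H^{5/4+\eta}}$; the extra $\eta$ is precisely what is needed to sum the dyadic blocks.

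For the single-block bound, fix $N\ge1$ and set $G(t,x)=e^{-t\px^5}P_Nu_0(x)$, which is smooth. Since $\sup_{t\in[0,T]}|G(t,x)|$ is a monotone limit of $|T_{t_n(\cdot)}u_0(x)|$ for suitable simple functions $t_n:\mathbb{R}\to[0,T]$, it suffices to bound, uniformly over measurable $t:\mathbb{R}\to[0,T]$, the linearized operator $T_{t(\cdot)}f(x):=e^{-t(x)\px^5}P_Nf(x)$ on $L^2_x$. Using $\|T_{t(\cdot)}\|^2=\|T_{t(\cdot)}T_{t(\cdot)}^*\|$ and carrying out the inner spatial integration, $T_{t(\cdot)}T_{t(\cdot)}^*$ has the Hermitian kernel $K(x,y)=c\int e^{i((x-y)\xi+(t(x)-t(y))\xi^5)}|\psi(\xi/N)|^2\,d\xi$ (the overall sign of the $\xi^5$ term is immaterial; $\psi$ is the Littlewood--Paley symbol), so by Schur's test everything reduces to $\sup_x\int_{\mathbb{R}}|K(x,y)|\,dy\lesssim_T N^{5/2}$.

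To prove this bound, write $a=x-y$, $\tau=t(x)-t(y)\in[-T,T]$, so the phase is $\phi(\xi)=a\xi+\tau\xi^5$ on $|\xi|\sim N$, with $\phi'=a+5\tau\xi^4$ and $|\phi''|\sim|\tau|N^3$ there. I split the $y$-integral into three regions: (i) $|a|\le N^{-1}$, where $|K|\lesssim N$ trivially and the region has measure $\lesssim N^{-1}$, contributing $O(1)$; (ii) $|a|>N^{-1}$ and $|a|>C|\tau|N^4$ for a large absolute constant $C$, where $|\phi'|\gtrsim|a|$ and $|\phi^{(k)}|\lesssim|\tau|N^{5-k}\lesssim|a|N^{1-k}$ for $2\le k\le5$, so iterated non-stationary-phase integration by parts gives $|K|\lesssim_M N^{1-M}|a|^{-M}$, which integrates to $O(1)$ (take $M=2$); and (iii) $|a|>N^{-1}$ and $|a|\le C|\tau|N^4$, where the second-derivative van der Corput estimate gives $|K|\lesssim(|\tau|N^3)^{-1/2}$ and, since $|\tau|\gtrsim|a|/N^4$ on this region, decomposing dyadically in $|a|\sim A$ (with $N^{-1}\lesssim A\lesssim TN^4$) contributes $\lesssim A\,(AN^{-1})^{-1/2}\sim N^{1/2}A^{1/2}$, which sums geometrically to $\lesssim T^{1/2}N^{5/2}$. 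Adding the three contributions yields $\sup_x\int|K(x,y)|\,dy\lesssim_T N^{5/2}$, hence $\|T_{t(\cdot)}\|\lesssim_T N^{5/4}$, as needed.

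The one genuine obstacle is region (iii). It is the stationary-phase regime of $a\xi+\tau\xi^5$, and the decisive point is that the stationary set forces $|a|\lesssim|\tau|N^4$, so the van der Corput gain $(|\tau|N^3)^{-1/2}$ gets integrated over a set of measure only $\lesssim|\tau|N^4$, producing exactly the borderline power $|\tau|^{1/2}N^{5/2}\le T^{1/2}N^{5/2}$ --- which is why the exponent $5/4$ itself cannot be reached and one must take $s>5/4$. Everything else (the first inequality, the low-frequency piece, the non-stationary estimate in region (ii), the dyadic summation, and the measurable-selection reduction) is routine.
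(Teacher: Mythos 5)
The paper does not actually prove Lemma~\ref{maximal}; it is stated with a citation to Kenig--Ponce--Vega \cite{KPV91}, so there is no in-paper argument to compare against. Your method --- Littlewood--Paley decomposition, linearization of the $t$-supremum by a measurable $t(\cdot)$, the $TT^*$ reduction to a Hermitian oscillatory kernel, and Schur's test via non-stationary phase and van der Corput --- is precisely the Kenig--Ruiz/KPV machinery, and the numerology in each of your three regions is correct: region (iii) is indeed the borderline and produces $T^{1/2}N^{5/2}$ in the Schur bound, hence $N^{5/4}$ per dyadic block, with the extra $\eta$ coming only from the block summation.

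There is, however, one genuine gap. As written your argument proves $\|e^{-t\px^5}u_0\|_{L^2_xL^\infty_T}\lesssim_{T,\eta}\|u_0\|_{H^{5/4+\eta}}$, i.e. it bounds the \emph{left-most} quantity in the chain. The lemma asserts, and the a priori estimate in Section~\ref{lwpproof} actually uses, the bound for the strictly stronger \emph{middle} quantity $\bigl(\sum_j\|e^{-t\px^5}u_0\|^2_{L^\infty([0,T]\times[j,j+1))}\bigr)^{1/2}$. Your first (trivial) inequality shows the middle norm dominates the left-most one, so the left-most bound does not imply the middle one. Nor can one pass cheaply via a one-dimensional Sobolev inequality on each unit cell: applying $\|h\|_{L^\infty(I_j)}^2\lesssim\|h\|_{L^2(I_j)}^2+\|h\|_{L^2(I_j)}\|\px h\|_{L^2(I_j)}$ and summing over $j$ costs an extra half-derivative and only yields $N^{7/4}$ per block, which does not close at $5/4+\eta$.

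The fix stays entirely inside your framework but you should state it. Linearize over cells rather than over $t$ only: for each $j$ choose, by the same measurable-selection/monotone-limit argument, a point $(t_j,x_j)\in[0,T]\times[j,j+1)$ nearly achieving the supremum, and bound the operator $L^2(\mathbb{R})\to\ell^2(\mathbb{Z})$, $f\mapsto\bigl(e^{-t_j\px^5}P_Nf(x_j)\bigr)_j$. Its $TT^*$ on $\ell^2(\mathbb{Z})$ has the discrete Hermitian kernel $K(j,j')=c\int e^{i((x_j-x_{j'})\xi+(t_j-t_{j'})\xi^5)}|\psi(\xi/N)|^2\,d\xi$, and one runs your three-region estimate with the $y$-integral replaced by a sum over $j'$. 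The only change is in region (i): there are now at most $O(1)$ lattice points with $|x_j-x_{j'}|\le N^{-1}$ (since $|x_j-x_{j'}|\ge|j-j'|-1$), each contributing $O(N)$, so region (i) gives $O(N)$ rather than $O(1)$. Since $N\le T^{1/2}N^{5/2}$ for $N\ge1$ this is harmless, and the Schur bound $\sup_j\sum_{j'}|K(j,j')|\lesssim_T N^{5/2}$ survives, giving the desired $\ell^2_jL^\infty$ bound per block and completing the proof.
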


Next, for solutions to the nonlinear equation \eqref{fifth} we prove the local smoothing estimate. This is an analog of Kato's local smoothing for the KdV equation. This approach to \eqref{fifth} was done by Ponce \cite{ponce94}, here we improve Ponce's result by adding the modified energy idea used in the proof of Proposition~\ref{energy}.

\begin{lemma}[Local smoothing estimate]\label{localsmoothing}
Let $s>\frac{5}{2}$. Let $I$ be an interval of unit length. If $u$ is a Schwartz solution to the fifth order KdV $\eqref{fifth} $ , then
\begin{equation}\label{localsmoothingineq}
\Big( \int^T_0\,\int_I |D^{s-2}\px^4u|^2 dxdt \Big) \lesssim_{|I|,s} \big(1+ \|\px^3u\|_{L^1_TL^\infty_x}+\|J^su\|^2_{L^\infty_TL^2_x} \big)\|J^su\|^2_{L^\infty_TL^2_x}.
\end{equation}
\\
\end{lemma}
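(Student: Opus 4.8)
This is a Kato-type local smoothing estimate, and the standard route is a weighted energy argument: multiply the equation by $\psi(x) D^{2s-4}\px^8 u$ (or equivalently apply $D^{s-2}\px^2$ to the equation and integrate against $\psi(x) D^{s-2}\px^2 u$) where $\psi$ is a smooth, bounded, increasing weight with $\psi' \geq 0$ equal to $1$ on the interval $I$ and with all derivatives bounded. The point of the odd-order dispersion $\px^5$ is that the leading term $\int \psi \, \px^5 w \, w$, after integrating by parts, produces $\tfrac{5}{2}\int \psi' |\px^2 w|^2$ plus lower-order terms controlled by $\|\psi^{(k)}\|_\infty \|w\|_{H^1}^2$; taking $w = D^{s-2}\px^2 u$ this gives exactly the gain of two derivatives, i.e. control of $\int_0^T\int_I |D^{s-2}\px^4 u|^2$. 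So the first step is to set up the weight and carry out the integration by parts on the linear part, isolating the good term $\int \psi' |D^{s-2}\px^4 u|^2$.

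**The nonlinear contributions.** After the linear term, one must control the contributions of $c_1 \px u \px^2 u$ and $c_2 u\px^3 u$, that is, terms of the form $\int_0^T\int \psi \, D^{s-2}\px^2(\px u\px^2 u)\, D^{s-2}\px^2 u$ and $\int_0^T\int \psi \, D^{s-2}\px^2(u\px^3 u)\, D^{s-2}\px^2 u$. Here I would reuse exactly the machinery of Proposition~\ref{energy}: expand using the generalized commutator estimate of Lemma~\ref{commutator} to peel off the top-order pieces, so that the bad terms reduce to (i) genuinely lower-order terms bounded directly by $\|\px^3 u\|_{L^1_T L^\infty_x}\|J^s u\|_{L^\infty_T L^2_x}^2$ after an $x$-integration (Hölder, Sobolev embedding $H^s \hookrightarrow W^{3,\infty}$ for $s>5/2$), and (ii) a leftover term of the schematic shape $\int \psi\, \px u \, D^{s-2}\px^4 u\, D^{s-2}\px^4 u$ — a derivative higher than affordable — which is handled by adding a correction term to the energy just as the term $a_s\int u D^{s-2}\px u D^{s-2}\px u$ was added before. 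Concretely, I would work with the modified weighted energy $\int \psi (D^{s-2}\px^2 u)^2 + b_s \int \psi\, u (D^{s-2}\px^3 u)^2$ and choose $b_s$ so that the dangerous $\px u (D^{s-2}\px^4 u)^2$ terms from the two sources cancel; the remaining pieces from differentiating the correction term in time are all of lower order and absorbed into the right-hand side. Integrating in time from $0$ to $T$ and using $\psi' \geq \mathbf 1_I$ yields \eqref{localsmoothingineq}, with the quadratic-in-$\|J^s u\|^2$ term coming from the quartic (degree-four) contributions.

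**Main obstacle.** The genuinely delicate point is the same one flagged in Section~\ref{menergy}: the worst nonlinear term $u\px^3 u$ carries three derivatives, one more than local smoothing recovers, so the naive weighted estimate loses a derivative and must be rescued by the modified-energy cancellation. Getting the bookkeeping right — namely that after applying Lemma~\ref{commutator}, integrating by parts against the weight $\psi$ (which generates extra terms with $\psi'$ that are themselves favorable or lower-order), and choosing $b_s$, every surviving term is either of the form $\|\px^3 u\|_{L^\infty_x}\|D^{s-2}\px^4 u\|_{L^2_x}\|J^s u\|_{L^2_x}$ (absorb the smoothing norm by Cauchy–Schwarz with a small constant, since it appears linearly) or purely of size $(1 + \|J^s u\|_{L^\infty_T L^2_x}^2)\|J^s u\|_{L^\infty_T L^2_x}^2$ after time integration — is the crux. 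Once that cancellation is arranged, the estimate follows from Gronwall-free direct integration in $t$, since $\|\px^3 u\|_{L^1_T L^\infty_x}$ already appears on the right-hand side; the constant's dependence on $|I|$ enters only through $\|\psi'\|_\infty$ and $\|\psi^{(k)}\|_\infty$.
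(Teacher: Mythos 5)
Your overall strategy — weighted modified energy, integration by parts on $\px^5$ to extract the good term $\int \psi'(D^{s-2}\px^4 u)^2$, and a correction term borrowed from Proposition~\ref{energy} to cancel the unfavorable contribution of $u\px^3 u$ — is exactly the paper's. However, the correction term you write down is off by two derivatives, and this is a genuine error, not a typo. You propose the energy $\int \psi (D^{s-2}\px^2 u)^2 + b_s\int \psi\, u\,(D^{s-2}\px^3 u)^2$, but $(D^{s-2}\px^3 u)^2$ has total derivative order $2s+2$, which is \emph{higher} than the main term $(D^{s-2}\px^2 u)^2$ (order $2s$); consequently the correction is not controlled by $\|J^s u\|_{L^2}^2$, so the modified energy is not even comparable to the norm you are trying to estimate, and its time derivative produces terms of order $2s+7$ (from $\int\psi\,u\,D^{s-2}\px^8 u\,D^{s-2}\px^3 u$) that are strictly worse than the smoothing gain. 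The paper's correction term is $a_s\int u\,(D^{s-2}\px u)^2\,\phi$, which is two orders \emph{below} the main term, matching the pattern of Proposition~\ref{energy}. Relatedly, you misidentify the bad term you must cancel: after applying Lemma~\ref{commutator} and integrating by parts, the surviving unfavorable piece from $u\px^3 u$ is $\int \psi\,\px u\,(D^{s-2}\px^3 u)^2$ (order $2s+3$), not $\int\psi\,\px u\,(D^{s-2}\px^4 u)^2$ (order $2s+5$); the latter is actually the same order as the good weighted smoothing term.

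A second, smaller gap: the paper first proves an intermediate one-derivative smoothing estimate \eqref{1localsmoothing}, $\int_0^T\int (D^{s-2}\px^3 u)^2\phi'$, precisely because the linear integration by parts also generates terms like $\int(D^{s-2}\px^3 u)^2\phi^{(3)}$, of order $2s+2$, which sit strictly between $\|J^s u\|^2$ and the two-derivative smoothing term and are not directly absorbable. You gesture at handling such terms by Cauchy--Schwarz with a small constant, but you would still need to interpolate through the one-derivative level (and commute the interpolation with the localizing weight), which is essentially re-proving \eqref{1localsmoothing}. Spelling out this two-step structure, and fixing the correction term to $\int \psi\, u\,(D^{s-2}\px u)^2$, would bring your argument into line with the paper's.
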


\begin{proof}
Let $ \phi \in C^\infty(\mathbb{R}) $ be an increasing function whose derivative $ \phi' \in C^\infty_0,$ $\phi'\geq 0$ and $\phi'\equiv 1 $ on $I$. Our local smoothing estimate recover two derivatives. We take two steps. First we prove a local smoothing estimate recovering one derivative.  The claim is the following:
\begin{equation}\label{1localsmoothing}
 \int^T_0\,\int |D^{s-2}\px^3u|^2 \phi' dxdt  \lesssim_\phi \big(1+ \|J^3u\|^2_{L^1_TL^\infty_x} + \|J^su\|_{L^\infty_TL^2_x}\big)\sup_{[0,T]}\|J^su\|^2_{L^2_x}.\\
\end{equation}
From \eqref{fifth}
\begin{align*}
\frac{1}{2}& \frac{d}{dt}\int D^{s-2}\px u D^{s-2}\px u \phi = \int D^{s-2}\px\partial_t uD^{s-2}\px u \phi \\
   &= \int D^{s-2}\px^6 D^{s-2}\px u\phi + c_1\int D^{s-2}\px(u\px^3 u) D^{s-2} \px u \phi + c_2\int D^{s-2}\px(\px u\px^2 u) D^{s-2}\px u\phi \\
   &=: A+B+C.
\end{align*}
A few integrations by parts show
$$ A = c\int (D^{s-2}\px^3 u)^2\phi' + c\int(D^s u)^2 \phi^{(3)} + c\int(D^{s-2}\px u)^2 \phi^{(5)} $$
where the last two terms are $O\left(\|J^s\|_{L^2_x}^2\|\px^3 u\|_{L^\infty_x}\right) $
To estimate $ B$ and $C$ we use Lemma $\ref{commutator}$ and integrations by parts.
\begin{align*}
B &= c\int D^{s-2}(u\px^3 u)D^{s-2}\px u \phi + c\int D^{s-2}(u\px^3 u)D^{s-2}\px u \phi' \\
  &= O(\|J^s u\|_{L^2_x}^2\|\px^3 u\|_{L^\infty_x}) + c\int uD^{s-2}\px^3 u D^s u \phi,
\end{align*}
\begin{align*}
\int uD^{s-2}\px^3 u D^s u \phi  &= c\int \px u(D^s u)^2 \phi + c\int u(D^s u)^2 \phi' \\
    & = O(\|J^s u\|_{L^2_x}^3),
\end{align*} where we used Sobolev embedding.\\
Similarly, one can prove that $C$ is also $O\left(\|J^s\|_{L^2_x}^2(\|J^3 u\|_{L^\infty_x}+ \|J^su\|_{L^2_x})\right)$.
Then, $\eqref{1localsmoothing}$ is obtained by integrating in time.\\
Now, let's prove full local smoothing estimate $\eqref{localsmoothingineq}$. For this proof we use a cancellation as used in the proof of Proposition $\ref{energy}$.
Define
\begin{align*}
 E_\phi(t) &= \int (D^s u)^2 \phi + a_s \int u (D^{s-2}\px u)^2 \phi \\
       & = E_1(t) +E_2(t).
\end{align*}
Using $\eqref{fifth}$ and integrations by parts
\begin{align*}
\frac{d}{dt}E_1 &= c\int (D^{s-2}\px^4 u)^2 \phi' + c\int (D^{s-2}\px^3)^2 \phi^{(3)} +c\int (D^s u)^2 \phi^{(5)} +\\
    & c\int D^s(u\px^3 u)D^s u \phi + c\int D^s(\px u\px^2 u)D^s u \phi. \\
\end{align*}
Here, each constant $c$ has a different value. From $\eqref{1localsmoothing}$ the second and third term are $ O\left((1+ \|\px^3u\|_{L^1_TL^\infty_x}+\|J^su\|_{L^\infty_TL^2_x} \big)\|J^su\|^2_{L^\infty_TL^2_x}\right) $ after integration in time. When we estimate the last two terms, we use Lemma $\ref{commutator}$. Since the proof of the two are similar we give a proof of the fourth term.
\begin{align*}
\int D^s &(u\px^3 u)D^s u \phi = \\
 & \int\Big[D^s(u\px^3u) - uD^{s-2}\px^5 u - s \px uD^{s-2}\px^4 u - \frac{s(s-1)}{2}\px^2 u D^{s-2}\px^3 u     \Big]D^s u \phi \\
  &+ \int uD^{s-2}\px^5 uD^s u \phi   +s\int \px uD^{s-2}\px^4 uD^s u \phi + \frac{s(s-1)}{2} \int \px^2 u D^{s-2}\px^3 uD^s u \phi.
\end{align*}
The first term is done by Lemma~$\ref{commutator}$. Several integrations by parts show that the rest are a linear combination of
\begin{align*}
\int \px u (D^{s-2}\px^3 u)^2 \phi, \, \int u(D^{s-2}\px^3 u)^2\phi', \,\int \px^3 u (D^s u)^2 \phi \\
\int \px^2 u (D^s u)^2 \phi', \,   \int \px u (D^s u)^2 \phi'', \,   \int u (D^s u)^2 \phi'''.
\end{align*}
Five terms \emph{except the first term} are $ O\left(\big(1+ \|\px^3u\|_{L^1_TL^\infty_x}+\|J^su\|_{L^\infty_TL^2_x} \big)\|J^su\|^2_{L^\infty_TL^2_x}\right) $ after integration in time. (for the second one, we used $\eqref{1localsmoothing}$.) So, we need $E_2$ to cancel out the first term $\int \px u (D^{s-2}\px^3 u)^2 \phi $. Since the method used here is very similar to the proof of Proposition $\ref{energy}$, a sketch is enough.
\begin{align*}
\frac{d}{dt}E_2(t) &= 2\int uD^{s-2}\partial_{tx}D^{s-2}\px u \phi + \int \partial_t uD^{s-2}\px uD^{s-2}\px u\phi\\
  & -2\int uD^{s-2}\Big(\px^6 u +c_1\px(u\px^3 u) + c_2\px(\px u\px^2 u) \Big)D^{s-2}\px u\phi + \\
  & -\int\px^5uD^{s-2}\px uD^{s-2}\px u \phi -c_2\int u\px^3 u D^{s-2}\px u D^{s-2}\px u \phi -c_1\int \px u \px^2 u D^{s-2}\px u\phi.
\end{align*}
This consists of terms of degree $3$ with respect to u (the number of $u$'s or its derivatives are $3$) and terms of degree $4$.
\begin{align*}
\text{Terms of degree 3} &= \int -2uD^{s-2}\px^6uD^{s-2}\px u \phi - \int \px^5 uD^{s-2}\px uD^{s-2}\px u\phi \\
 &=  \sum_{a+2b+c=7, b\geq 1} c^{a,b,c}\int \px^a u (D^{s-2}\px^b u)^2 \px^c \phi.
\end{align*}
By direct computation one see $ c^{5,1,0} =0 $. Amongst those terms, $\int \px u(D^{s-2}\px^3 u)^2 \phi $ is used to cancel out the same term from $E_1(t)$ by choosing an appropriate coefficient $ a_s$ and one can check the rest are all bounded by $O\left(\big(1+ \|J^3u\|^2_{L^1_TL^\infty_x} + \|J^su\|_{L^\infty_TL^2_x}\big)\sup_{[0,T]}\|J^su\|^2_{L^2_x}\right)$ after integrating in time.
Finally, terms of degree 4 are the following:
$$\int (c_1\px u\px^2u + c_2u \px^3 u) D^{s-2}\px u D^{s-2}\px u \phi +\int uD^{s-2}\px (c_2u\px^3 u+c_1\px u\px^2 u)D^{s-2}\px u\phi, $$
which are bounded by $O(\|\px^3\|_{L^\infty_x}\|J^s u\|^3_{L^2_x})$ by integrations by parts and Lemma~$\ref{commutator}$.
Therefore, $\eqref{localsmoothingineq}$ is obtained by integrating $ \frac{d}{dt}E_\phi $ in time.
\end{proof}

\section{Proof of Theorem~\ref{lwp} }\label{lwpproof}

Let $s > 5/2 $ and set $ \epsilon = s - 5/2$. In the subcritical case with the negative critical regularity, without loss of generality, we may assume the initial data is small. i.e. $$ \|u_0\|_{H^s} < \delta_0 $$
for $\delta_0 >0 $ small enough (to be decided later). More precisely, by the scaling invariance if $u(t,x)$ is a solution to $\eqref{fifth} $ with initial data $u_0$ , then $ u_\lambda(t,x) = \lambda^{-2}u(\frac{t}{\lambda^5},\frac{x}{\lambda})$ is a solution with initial data $u_{0,\lambda}(x) = \lambda^{-2}u_0(\frac{x}{\lambda}) $. Then, $\|\lambda^{-2}u_0(\frac{x}{\lambda})\|_{L^2} = \lambda^{-3/2}\|u_0\|_{L^2}$ and $\|\lambda^{-2}u_0(\frac{x}{\lambda})\|_{\dot{H}^s} = \lambda^{-3/2-s}\|u_0\|_{\dot{H}^s}$. So, we can downsize the initial data by choosing $\lambda $ sufficiently large. Thus, if we show that for any $u_0$ with $ \|u_0\|_{H^s} < \delta_0 $, there is a unique solution $u(t,x)\in C([0,1],H^s(\mathbb{R})) $, then for arbitrary initial data $ u_0 $, there exists a solution for $ T \gtrsim \|u_0\|_{H^s}^{-\frac{10}{3}} $.  \\ From now we set
$$ \|u_0\|_{H^s} =\delta, \qquad \qquad T=1  $$
From the previous local well-posedness result (e.g. \cite{ponce94}), for a smooth initial data we have a unique smooth solution. In view of approximating solutions, the key step is to find an \emph{a priori} bound for $ \|u\|_{L^\infty_TH^s_x} $.
\subsection{\emph{A priori bound}}
Our goal is to prove
\begin{quote}
''There exists $\delta_0 > 0$ so that if $\|u_0\|_{H^s} < \delta < \delta_0$, then $\|u\|_{L^\infty_TH^s_x}< 6\delta $.''
\\
\end{quote}
To prove this by continuity argument it suffices to show
\begin{quote}
''There exists $\delta_0 > 0$ so that if $\|u_0\|_{H^s} < \delta < \delta_0$ and $\|u\|_{L^\infty_TH^s_x} < 10\delta$,\\
     then $\|u\|_{L^\infty_TH^s_x}< 6\delta $.'' \\
\end{quote}
Without loss of generality, we may assume $10\delta_0 < \min(\frac{1}{2},\frac{1}{2a_sC}) $, where defined in \eqref{energybound}. In observance of the energy estimate $\eqref{energybound}$, one need to control $\|\px^3u\|_{L^1_TL^\infty_x}$. Note that $\|\px^3u\|_{L^1_TL^\infty_x} \leq \|\px^3u\|_{L^2_TL^\infty_x}$.\\
Applying Lemma~\ref{refinedstrichartz} (with $\alpha =1 , F = c_1\px u\px^2 + c_2u\px^3u $)

\begin{align*}
\|\px^3u\|_{L^2_TL^\infty}  \lesssim & \|D^su\|_{L^\infty_TL^2_x} + \|u\|_{L^\infty_TL^2_x} + \|D^{s-1}(\px u\px^2u)\|_{L^2_TL^2_x}
         \\
         & + \|D^{s-1}(u\px^3u)\|_{L^2_TL^2_x} + \|\px u\px^2u\|_{L^2_TL^2_x}
         + \|u\px^3u\|_{L^2_TL^2_x}.
\end{align*}
By \eqref{product}, \eqref{K-P}
\begin{align}
\|D^{s-1}(u\px^3u)\|_{L^2_TL^2_x} & \lesssim \|uD^{s-1}\px^3u\|_{L^2_TL^2_x} + \|D^{s-1}u\|_{L^\infty_TL^2_x}\|\px^3u\|_{L^2_TL^\infty_x} \nonumber\\
\|D^{s-1}(\px u\px^2u)\|_{L^2_TL^2_x} & = \|D^{s-1}\big(\frac{1}{6}\px^3(u^2)-\frac{1}{3}u\px^3u\big)\|_{L^2_TL^2_x} \nonumber\\
            &\lesssim \|D^{s-1}\px^3(u^2)\|_{L^2_TL^2_x} + \|D^{s-1}(u\px^3u)\|_{L^2_TL^2_x} \nonumber\\
     \label{intermediate}       &\lesssim \|u D^{s-1}\px^3u\|_{L^2_TL^2_x} + \|uD^{s-1}\px^3u\|_{L^2_TL^2_x} +\|D^{s-1}u\|_{L^\infty_TL^2_x}\|\px^3u\|_{L^2_TL^\infty_x},
\end{align}

\begin{align*}
\|uD^{s-1}\px^3u\|^2_{L^2_TL^2_x} &= \sum_j \|uD^{s-1}\px^3u\|^2_{L^2([0,T]\times[j,j+1])}\\
   & \leq \Big(\sum_j \|u\|^2_{L^\infty([0,T]\times[j,j+1])} \Big) \Big(\sup_j \|D^{s-1}\px^3u\|^2_{L^2([0,T]\times[j,j+1])} \Big)\\
   & =: A\cdot B.\\
\end{align*}
By Lemma~\ref{localsmoothing}
\begin{equation}
B \lesssim \big(1+ \|\px^3u\|_{L^2_TL^\infty_x} + \|J^su\|^2_{L^\infty_TL^2_x}\big)\|J^su\|^2_{L^\infty_TL^2_x}.
\end{equation}
Hence,
\begin{equation}\label{apriori10}
\|\px^3u\|_{L^2_TL^\infty_x} \lesssim \delta + \delta \|\px^3u\|_{L^2_TL^\infty} +
\delta(1+ \|\px^3u\|_{L^2_TL^\infty})\Big(\sum_j \|u\|^2_{L^\infty([0,T]\times[j,j+1])} \Big).
\end{equation}
Using the Duhamel formula and Lemma~\ref{maximal}
\begin{align*}
\Big(\sum_j &\|u\|^2_{L^\infty([0,T]\times[j,j+1])} \Big)^{1/2} \\
 &\lesssim \delta + \|\px u\px^2 u\|_{L^1_TL^2_x} + \|u\px^3u\|_{L^1_TL^2_x} + \|D^{5/4+\eta}(\px u\px^2 u)\|_{L^1_TL^2_x} + \|D^{5/4+\eta}(u\px^3u)\|_{L^1_TL^2_x} \\
 & \lesssim \delta + \|\px u\|_{L^\infty_TL^2_x}\|\px^3u\|_{L^1_TL^\infty_x} + \|D^{5/4+\eta}(\px u\px^2 u)\|_{L^2_TL^2_x} + \|D^{5/4+\eta}(u\px^3u)\|_{L^2_TL^2_x}.
\end{align*}
Since $5/4+\eta < s-1= 3/2+\epsilon $(by choosing $\eta < 1/4$ ), the repeating previous computation we have
\begin{equation*}
\|D^{5/4+\eta}(\px u\px^2u)\|_{L^2_TL^2_x} \lesssim \|u D^{5/4+\eta}\px^3u\|_{L^2_TL^2_x} + \|uD^{5/4+\eta}\px^3u\|_{L^2_TL^2_x} + \|D^{5/4+\eta}u\|_{L^\infty_TL^2_x}\|\px^3u\|_{L^2_TL^\infty_x},
\end{equation*}
\begin{align*}
\|D^{5/4+\eta}(u\px^3u)\|_{L^2_TL^2_x}  \lesssim \delta \|\px^3u\|_{L^2_TL^\infty_x} + \delta (1+ \|\px^3u\|_{L^2_TL^\infty_x}) \Big(\sum_j\|u\|^2_{L^\infty([0,T]\times[j,j+1])} \Big)^{1/2},
\end{align*}

\begin{equation}\label{apriori20}
\Big(\sum_j \|u\|^2_{L^\infty([0,T]\times[j,j+1])} \Big)^{1/2} \lesssim \delta + \delta \|\px^3u\|_{L^2_TL^\infty_x} + \delta (1+ \|\px^3u\|_{L^2_TL^\infty_x}) \Big(\sum_j\|u\|^2_{L^\infty([0,T]\times[j,j+1])} \Big)^{1/2}.
\end{equation}
Setting

$$  f(T) := \|\px^3 u\|_{L^2_TL^\infty} + \Big(\sum_j \|u\|^2_{L^\infty([0,T]\times[j,j+1])} \Big)^{1/2}, $$
by \eqref{apriori10} and \eqref{apriori20} we have
\begin{equation}
f(T) \lesssim  \delta + \delta f(T) + \delta f(T) (1+ f(T)).
\end{equation}
Using Sobolev's inequality
\begin{align*}
 f(0) &= \|\px^3 u_0\|_{L^\infty} + \Big(\sum_j \|u_0\|^2_{L^\infty([j,j+1])} \Big)^{1/2} \\
        &\lesssim  \|u_0\|_{H^s} =\delta
\end{align*}
By choosing $\delta >0$ sufficiently small, we have $ f(T) \leq C \delta $.\\
From \eqref{energybound}
$$ \|J^su\|_{L^\infty_TL^2_x} \leq 3\|u_0\|_{H^s} e^{Cf(T)} \leq 3\,\delta e^{C'\delta}.  $$
Therefore, there is $\delta_0 $ so that whenever $\delta < \delta_0$,   $$\|J^su\|_{L^\infty_TL^2_x} \leq 6\delta.$$\\

\subsection{Strong convergence}
We basically follow the $\epsilon$-approximate method of Bona-Smith \cite{BS} (see also, \cite{ponce91},\cite{KPV91})\\
We denote by $u^\epsilon(t) $ the solution of the initial value problem $\eqref{fifth} $ with initial data $ u_0^\epsilon = \rho_\epsilon * u_0$ where $ \rho_\epsilon$ is a mollifier as defined in Section 2.
From the $\emph{a priori}$ estimate we have
\begin{equation}\label{apriori}
\|J^s u^\epsilon \|_{L^\infty_TL^2_x} + \|\px^3 u^\epsilon \|_{L^2_TL^\infty_x} +  \Big(\sum_j \|u^\epsilon \|^2_{L^\infty([0,T]\times[j,j+1])} \Big)^{1/2} \\
\leq C(T,\|u_0\|_{H^s}).
\end{equation}
In order to establish the existence of a strong solution $u(t)$ as a limit in $L^\infty_TH^s_x $ one need the following:

\begin{proposition}\label{normconvergence}
For any $T>0$, $\{u^\epsilon\}_{\epsilon>0}$ converges in the $L^\infty_TH^s_x$ norm as $\epsilon $ tends to zero.
\end{proposition}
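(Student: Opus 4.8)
The proof will follow the Bona--Smith $\epsilon$-approximation scheme indicated above. Fix $0<\epsilon'\le\epsilon$ and set $w:=u^\epsilon-u^{\epsilon'}$; then $w$ is a smooth solution of the linear variable-coefficient equation
\begin{equation*}
\partial_t w+\px^5 w+c_1\big(\px w\,\px^2u^\epsilon+\px u^{\epsilon'}\,\px^2 w\big)+c_2\big(w\,\px^3u^\epsilon+u^{\epsilon'}\,\px^3 w\big)=0,\qquad w(0)=u_0^\epsilon-u_0^{\epsilon'},
\end{equation*}
whose coefficients $u^\epsilon,u^{\epsilon'}$ are controlled uniformly in $\epsilon,\epsilon'$ by the a priori estimate \eqref{apriori}. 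Since $\rho_\epsilon*u_0\to u_0$ in $H^s$, it suffices to prove an inequality of the form $\|w\|_{L^\infty_TH^s_x}\lesssim_{R,T}\|w(0)\|_{H^s_x}+o_\epsilon(1)$, with $R=\|u_0\|_{H^s}$, and then telescope along a dyadic sequence $\epsilon_n\downarrow0$. Because $u\px^3u$ loses derivatives, such a bound is not available directly at the top regularity, so one argues in two stages.

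First I would prove convergence in a weaker norm with an explicit rate. Running, for the difference equation in place of a genuine solution of \eqref{fifth}, the modified-energy method of Proposition~\ref{energy}, the refined Strichartz estimate Proposition~\ref{refinedstrichartz}, the maximal estimate Lemma~\ref{maximal}, and the difference-equation version of the nonlinear Kato smoothing Lemma~\ref{localsmoothing} (proved identically, with $u^\epsilon,u^{\epsilon'}$ and their auxiliary norms treated as data bounded by \eqref{apriori}), one obtains for any $\sigma\le s-2$
\begin{equation*}
\|w\|_{L^\infty_TH^{\sigma}_x}+\|\px^3 w\|_{L^2_TL^\infty_x}+\Big(\sum_j\|w\|^2_{L^\infty([0,T]\times[j,j+1])}\Big)^{1/2}\ \lesssim_{R,T}\ \|w(0)\|_{H^{\sigma}_x}.
\end{equation*}
By \eqref{w_epsilon}, $\|w(0)\|_{H^{\sigma}_x}=\|u_0^\epsilon-u_0^{\epsilon'}\|_{H^{\sigma}_x}\lesssim\epsilon^{\,s-\sigma}\|u_0\|_{H^s_x}$, so all of these quantities, and hence also the corresponding $L^2_TL^\infty_x$ norms of $w$ and of $\px w,\px^2 w,\px^3 w$, are $O(\epsilon^{\,s-\sigma})$.

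The second stage upgrades the estimate to $H^s$ by a modified-energy identity for $w$ at level $s$, with the correction term now of the form $a_s\int(\alpha u^\epsilon+\beta u^{\epsilon'})(D^{s-2}\px w)^2$, linear in the coefficients and quadratic in $w$; the constants $\alpha,\beta$ depending on $c_1,c_2,s$ are chosen exactly as in Proposition~\ref{energy} so that the top-order contributions $\int\px u^\epsilon(D^s\px w)^2$, $\int\px u^{\epsilon'}(D^s\px w)^2$ and their $c_2$-analogues arising from $\frac{d}{dt}\|D^sw\|_{L^2}^2$ are cancelled. After this cancellation, and after using the generalized commutator Lemma~\ref{commutator} to organize the remaining nonlinear contributions, every surviving term is of one of three types: (i) a time-integrable multiple of $E_s[w]$, controlled by $\|\px^3u^\epsilon\|_{L^\infty}+\|\px^3u^{\epsilon'}\|_{L^\infty}\in L^1_T$; (ii) a term carrying a factor $\|\px^3 w\|_{L^\infty}$ or $\|\px^2 w\|_{L^\infty}$ against $O(1)$ Sobolev norms of $u^\epsilon,u^{\epsilon'},w$, which by the first stage is $O(\epsilon^{\,s-\sigma})$ after integration in time; or (iii) a term in which a coefficient is differentiated more than $s$ times, bounded by $\|u^\epsilon\|_{H^{s+k}}\lesssim\epsilon^{-k}$ (via \eqref{v_epsilon} propagated by \eqref{energybound}) times a low norm of $w$ carrying, by the first stage, a factor $\epsilon^{\ge k}$. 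Summing up gives $\frac{d}{dt}E_s[w]\lesssim_{R,T}(\text{integrable})E_s[w]+o_\epsilon(1)$, whence, by Gronwall and the equivalence \eqref{energy31} for the difference energy, $\|w\|_{L^\infty_TH^s_x}\lesssim_{R,T}\|w(0)\|_{H^s_x}+o_\epsilon(1)$.

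The crux is the bookkeeping in (iii): after the modified energy has removed the higher Sobolev norm of $w$, one must check that no surviving term needs more from $w$ than the first stage provides, i.e.\ that the loss $\epsilon^{-k}$ from differentiating a mollified coefficient past $s$ (the term $\int w\,D^s\px^3u^\epsilon\,D^sw$ being the emblematic one) is always compensated by the rate gained for the appropriate lower-order norm of $w$. This is the very quasilinear derivative-counting issue that motivated the modified energy of Section~\ref{menergy}, now applied to the difference of two solutions, and it is the step I expect to be most delicate --- quite possibly demanding an extra correction term in $E_s[w]$ beyond the one inherited from Proposition~\ref{energy}. Granting the estimate for one pair $(u^\epsilon,u^{\epsilon'})$, summing over the dyadic sequence shows $\{u^\epsilon\}$ is Cauchy in $C([0,T],H^s_x)$, which is the assertion of the proposition.
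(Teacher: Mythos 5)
Your two-stage framework is the right shape, but the first stage over-claims in a way that breaks the second. You propose to run the full machinery (refined Strichartz, maximal estimate, nonlinear Kato smoothing) on the difference equation at regularity $\sigma\le s-2$, concluding in particular that $\|\px^3 w\|_{L^2_TL^\infty_x}\lesssim\|w(0)\|_{H^\sigma}\lesssim\epsilon^{s-\sigma}$. That cannot work: by Proposition~\ref{refinedstrichartz} with $\alpha=1$, controlling $\px^3 w$ in $L^2_TL^\infty_x$ already costs $D^{5/2+}$ in $L^\infty_TL^2_x$, so you need the difference to live at regularity above $5/2$, whereas $\sigma\le s-2$ can be as small as $\frac12+$ when $s$ is near the threshold. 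The paper's first stage is much more modest: it proves an $L^2$ persistence estimate $\|w(t)\|_{L^2_x}\lesssim\|w_0\|_{L^2_x}$ by a modified-energy argument (using the operator $J^{-1}$ rather than $D^{-1}$ as the antiderivative, since $\px^{-1}w$ need not be well-defined) and \emph{no} Strichartz or smoothing at all. Together with \eqref{w_epsilon} this gives the single rate $\|w(t)\|_{L^2_x}=o(\epsilon^s)$. The quantity $\|\px^3 w\|_{L^1_TL^\infty_x}$ is never given a rate: instead it is bounded self-consistently by $\|w\|_{L^\infty_TH^s_x}$ with a small constant $O(\|u_0\|_{H^s})$ (via the bootstrap quantity $g_T$), and the closure at level $s$ comes from absorption with small data, not from a decaying first-stage estimate.

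Your bookkeeping in step (iii) is also too coarse to see why the threshold is exactly $s>\frac52$. For the emblematic term $\int w\,D^s\px^3 v'\,D^sw$ you write the loss as $\epsilon^{-k}$ with $k=3$. But the coefficient $D^{s+3}v'$ must be placed in $L^\infty_x$: by Sobolev embedding alone that would cost $\epsilon^{-7/2-}$, and even your $\epsilon^{-3}$ would already force $s>3$ when paired with the $o(\epsilon^s)$ gain from $\|w\|_{L^2_x}$. The paper gets around this by applying the refined Strichartz estimate and the local smoothing estimate \emph{to the mollified solution $v'$ itself}, yielding $\|D^{s+3}v'\|_{L^2_TL^\infty_x}\lesssim\epsilon^{-5/2-\vartheta}$, so that the product is $o(\epsilon^{s-5/2-\vartheta})=o(1)$ precisely when $s>\frac52$. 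This is the step that actually requires the machinery of Sections~\ref{menergy} and~\ref{linearestimate}, and it is applied to the coefficient, not to the difference $w$; your scheme applies those estimates to $w$ at low regularity, where they do not hold.

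Two smaller points: the two-coefficient correction $a_s\int(\alpha u^\epsilon+\beta u^{\epsilon'})(D^{s-2}\px w)^2$ is more than needed --- the only genuinely dangerous term in the difference equation is $u^\epsilon\px^3 w$, and the paper's single correction $a_s\int u^\epsilon\,(D^{s-2}\px w)^2$ suffices. And the equivalence $E_w\sim\|w\|_{H^s}^2$ relies on the smallness of $\|u_0\|_{H^s}$, a hypothesis you invoke for Gronwall but not for this equivalence; the paper arranges this carefully through the rescaling reduction at the start of Section~\ref{lwpproof}.
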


\begin{proof}
Without loss of generality, we may still assume $T=1 $ and $ \|u_0\|_{H^s}= \delta < \min(\delta_0, \delta_1)$ ,where $\delta_0 $ is found in the previous subsection and $ \delta_1$ will be decided so as to satisfy smallness conditions on the upcoming analysis.
Let $\epsilon > \epsilon' > 0 $. Denote $v=u^\epsilon, v'=u^{\epsilon'} (v_0=u_0^\epsilon, v'_0=u_0^{\epsilon'}) $ for simplicity. Define $w= v-v'$. Then $w$ satisfies the equation
\begin{equation}\label{w-eq}
\partial_t w + \px^5 w + c_1 (\px v\px^2w + \px w\px^2v') + c_2(v\px^3w + w\px^3 v') =0
\end{equation} and $\|w(0)\|_{H^s_x} \rightarrow 0$ as $\epsilon$ tends to $0$. We use the standard $o(\epsilon^k)$ notation to quantities, for which $\frac{o(\epsilon^{k})}{\epsilon^k} \rightarrow 0$ as $\epsilon \rightarrow 0$. Then, we have $\|w(0)\|_{H^s_x}=o(1)$ and want to show $\|w\|_{L^\infty_TH^s_x}=o(1)$. From now for simplicity we pretend that $\px v \px^2 w$ and $\px w\px^2 v'$ are absent (i.e. $c_1=0$). Actually, they can be handled by the same estimate used for $w \px^3v'$ and $v\px^3w$. We will explain how to handle those terms at the end of the proof. \\
First of all, we show the persistence property of the $L^2$-norm of $w$:
$$  \| w(t)\|_{L^2_x} \lesssim_{\|u_0\|_{H^s}}  \|w_0\|_{L^2}.  $$
Integrating \eqref{w-eq} against $w$
$$\frac{1}{2}\frac{d}{dt}\|w(t)\|^2_{L^2_x} \leq \|\px^3v'\|_{L^\infty_x}\|w(t)\|^2_{L^2_x} + c \int \px v \px w\px w.  $$
Here, in order to cancel out the last term we use the modified energy idea again. But in this case since $ \px^{-1}w $ may not well-defined, we use the inhomogeneous negative derivative $ J^{-1} $, which is a bounded operator in $L^2$.
Define
$$ E_w^0(t) := \|w(t)\|_{L^2_x} + a\int vJ^{-1}wJ^{-1}w.  $$
Note that $ E_w^0 \sim \|w\|_{L^2_x} $ as long as $\|v\|_{L^\infty_x}$ is sufficiently small (i.e. $\|u_0\|_{H^s} <\delta_1$). From \eqref{w-eq}
\begin{align*}
\frac{d}{dt}\int \px v J^{-1}wJ^{-1}w &=  c\int \px vJ^{-1}\px^2 wJ^{-1}\px^2 w + c\int \px^3vJ^{-1}\px wJ^{-1}\px w \\
  &+ c\int vJ^{-1}\big(v\px^3w + w\px^3v'\big) J^{-1}w.
\end{align*}
Unlike the previous case, our modification of the energy doesn't exactly cancel out the harmful term $ \int \px v\px w\px w $. But the difference is harmless if the coefficients of the first two terms match.
\begin{align*}
\Big| \int \px v \px w\px w - \px vJ^{-1}\px^2wJ^{-1}\px^2w \Big| &\leq \|\px v\|_{L^\infty} \Big(\|\px w\|^2_{L^2_x}-\|J^{-1}\px^2w\|^2_{L^2_x} \Big)\\
& = c\|\px v\|_{L^\infty} \Big( \int \xi^2\widehat{w}(\xi)^2 d\xi - \int \frac{\xi^4}{1+ \xi^2} \widehat{w}(\xi)^2 d\xi   \Big) \\
& = c\|\px v\|_{L^\infty} \Big( \int \frac{\xi^2}{1+ \xi^2} \widehat{w}(\xi)^2 d\xi   \Big) \\
& \leq c\|\px v\|_{L^\infty} \Big( \int \widehat{w}(\xi)^2 d\xi   \Big) \\
&  = c\|\px v\|_{L^\infty} \| w(t) \|^2_{L^2_x}.
\end{align*}
We choose $a$ so that the coefficients of the above two terms are opposite. \\
For the rest terms, we use $ \|J^{-1}\px\|_{L^2 \rightarrow L^2} \leq 1$.
\begin{align*}
\int v J^{-1}(\px^3v'w)J^{-1}w & \leq \|v\|_{L^\infty_x} \|J^{-1}(\px^3v'w) \|_{L^2_x} \|J^{-1}w\|_{L^2_x}\\
    & \leq  \|v\|_{L^\infty_x} \|J^{-1}(\px^3v'w) \|_{L^2_x} \|J^{-1}w\|_{L^2_x}\\
    & \leq  \|v\|_{L^\infty_x} \|\px^3v'w \|_{L^2_x} \|w\|_{L^2_x}\\
    & \leq  \|v\|_{L^\infty_x} \|\px^3v'\|_{L^\infty_x}\|w \|_{L^2_x} \|w\|_{L^2_x}.\\
\end{align*}
Using \eqref{K-P} and Sobolev embedding 
\begin{align*}
\int v J^{-1}(v\px^3w)J^{-1}w &= \int vJ^{-1}\big[\px^3(vw) - 3\px^2(\px vw) +3\px(\px^2vw) -\px^3v w  \big]J^{-1}w \\
  & \lesssim \|J^3v\|^2_{L^\infty_x} \|w\|^2_{L^2} + \int vJ^{-1}\px^2(vw)J^{-1}\px w, \\
\int vJ^{-1}\px^2(vw)J^{-1}\px w & = \int v[J^{-1}\px^2; v]\,wJ^{-1}\px w  + \int v^2J^{-1}\px^2wJ^{-1}\px w \\
   & \lesssim \|v\|_{L^\infty_x} \|\px v\|_{L^\infty} \|J^{-1}\px w \|^2_{L^2_x} - \int  v\px vJ^{-1}\px wJ^{-1}\px w \\
   & \lesssim_{\|u_0\|_{H^s}}  \|\px^3v\|_{L^\infty_x} \|w\|^2_{L^2}.
  \end{align*}
Putting them all together we have
\begin{equation}\label{w-L2}
 \|w(t)\|_{L^2_x} \lesssim \|w_0\|_{L^2_x}\exp\big(C\|u_0\|_{H^s_x}\big)
\end{equation}
as long as $\|u_0\|_{H^s} $ is small.\\
From \eqref{w_epsilon} and \eqref{w-L2} we have
\begin{equation}\label{w-epsilon-persistent}
\|w(t)\|_{L^2_x} = o(\epsilon^s).
\end{equation}
Next, we turn to $\|w(t) \|_{H^s_x}$. To use the modified energy method we define

$$ E_w(t) := \|D^sw(t)\|^2_{L^2} + \|w(t)\|_{L^2} + a_s \int v D^{s-2}\px w D^{s-2}\px w.  $$
Note that for small $\|u_0\|_{H^s} $(i.e. $<\delta_1$), we have  $$ \int v D^{s-2}\px w D^{s-2}\px w \leq \|v\|_{L^\infty_x}\|D^{s-2} w\|_{L^2} \leq c\|J^sw(t)\|_{L^2_x}, $$from which we deduce $ \|J^sw(t)\|^2_{L^2} \sim E_w(t)  $.
Our goal is to prove $$ \frac{d}{dt}E_w(t) \leq C({\|u_0\|_{H^s_x}})  E_w(t),  \qquad \qquad \text{ for  }  0\leq t \leq 1.$$
Then since $ E_w(0) =o(1) $ and $ E_w(t) \sim \|w(t)\|_{H^s_x} $, we conclude $\|w(t)\|_{H^s_x} =o(1)$ as desired.\\
From \eqref{w-eq}
\begin{equation}\label{timederivative}
\frac{1}{2}\frac{d}{dt}\|D^sw\|^2_{L^2} + c_2\int D^s(v\px^3w)D^sw + c_2\int D^s(w\px^3v')D^sw = 0.
\end{equation}
In order to estimate $ \int D^s(v\px^3w)D^sw  $, we use the commutator estimate and cancellation with $ \frac{d}{dt} \int v D^{s-2}\px w D^{s-2}\px w  $. After some integrations by parts we have
\begin{align*}
 \int D^s(v\px^3w)D^sw = &\int \Big[D^s(v\px^3w) - vD^s\px^3w - s \px v D^s\px^2 w - \frac{s(s-1)}{2} \px^2 vD^s\px w   \Big] D^sw \\
    & + c \int \px v (D^s\px w)^2   + c \int \px^3v(D^sw)^2.
\end{align*}
Here each constant $c $ has a different value.\\
On the other hand,
\begin{align*}
\frac{d}{dt}\int v D^{s-2}\px wD^{s-2}\px w & = \int v_t D^{s-2}\px wD^{s-2}\px w + 2 \int vD^{s-2}\partial_{xt}wD^{s-2}\px w \\
 = c\int \px v(D^{s-2}\px^3 w)^2 + c\int \px^3 & v (D^s w)^2 + c\int vD^{s-2}\px (v\px^3 w)D^{s-2}\px w \\
 & + c\int vD^{s-2}\px (w\px^3v)D^{s-2}\px w.
\end{align*}
We choose $a_s$ so that $ \int \px v(D^{s-2}\px^3 w)^2 $ is canceled out in $ \frac{d}{dt}E_w(t) $. By Lemma~\ref{commutator}
\begin{align*}
\int \Big[D^s(v\px^3w) - vD^s\px^3w - &s \px v D^s\px^2 w - \frac{s(s-1)}{2} \px^2 vD^s\px w   \Big] D^sw \\
  &\lesssim \|\px^3v\|_{L^\infty_x}\|w \|^2_{H^s_x} + \|\px^3w\|_{L^\infty_x}\|v \|_{H^s_x}\|w\|_{H^s_x}.
\end{align*}
For the last two terms one can use integrations by parts and the commutator estimate to show

$$\left|\int vD^{s-2}\px (v\px^3 w)D^{s-2}\px w\right| + \left|\int vD^{s-2}\px (w\px^3v)D^{s-2}\px w\right|  \lesssim O(\|u_0\|_{H^s}) \|\px^3v\|^2_{L^\infty_x}\|w\|_{H^s_x}.
$$
Hence, using \eqref{apriori}
\begin{equation}\label{second}
-c_2\int D^s(v\px^3w)D^sw + \frac{d}{dt}\int v D^{s-2}\px wD^{s-2}\px w \lesssim_s  O(\|u_0\|_{H^s}) \Big\{ \|w(t)\|^2_{H^s_x} + \|\px^3w(t)\|_{L^\infty_x}\|w(t)\|_{H^s_x} \Big\}.\\
\end{equation}
To estimate the second term in $\eqref{timederivative}$ we get some help from the global smoothing effect of the Strichartz estimate.
\begin{align}
\int &D^s(w\px^3v')D^sw \lesssim \|D^s(w\px^3 v')\|_{L^2}\|D^sw\|_{L^2}  \nonumber\\
  & \lesssim \Big( \|D^sw \cdot \px^3v'\|_{L^2} + \|w\cdot D^s\px^3v'\|_{L^2}  + \|D^sw\|_{L^2}\|\px^3v'\|_{L^\infty}   \Big) \|D^sw\|_{L^2}  \nonumber\\
  & \lesssim \Big( \|D^sw \|_{L^2}\| \px^3v'\|_{L^\infty} + \|w \|_{L^2} \| D^s\px^3v'\|_{L^\infty}  + \|D^sw\|_{L^2}\|\px^3v'\|_{L^\infty}   \Big) \|D^sw\|_{L^2}  \nonumber\\
\label{second1}  & \lesssim  \|J^sw(t)\|^2_{L^2}  + \|J^sw\|_{L^2} o(\epsilon^{s})\|J^{s+3}v'\|_{L^\infty}.
\end{align} At the last step we used \eqref{w-epsilon-persistent}.\\
Integrating in time we now claim
\begin{equation}\label{third}
o(\epsilon^s)\, \|D^{s+3}v'\|_{L^1_TL^\infty} =o(1)
\end{equation}
Lemma~$\ref{refinedstrichartz}$ (or its proof) (choosing $\alpha =1$) and Lemma~$\ref{localsmoothing}$ lead that for small $\vartheta >0 $
\begin{align*}
\|D^{s+3}v'\|&_{L^2_TL^\infty_x} \\
&\lesssim   \|D^{s+5/2+\vartheta}u_0\|_{L^2} +  \| D^{s+3/2+\vartheta }(v'\px^3v')\|_{L^2_TL^2_x}  \\
         &\lesssim \epsilon^{-5/2-\vartheta} + \|D^{s+3/2+\vartheta}v'\|_{L^\infty_TL^2_x} \|\px^3 v'\|_{L^2_TL^\infty} + \\
         &    \Big(\sum_j \|v'\|^2_{L^\infty([0,T]\times[j,j+1])} \Big)^{1/2} \Big(\sup_j \|D^{s+3/2+}\px^3v'\|_{L^2([0,T]\times[j,j+1])} \Big) \\
         & \lesssim \epsilon^{-5/2-\vartheta} +  \Big(\sum_j \|v'\|^2_{L^\infty([0,T]\times[j,j+1])} \Big)^{1/2} \|D^{s+5/2+\vartheta}v'\|_{L^\infty_TL^2_x}  \\
         & \lesssim \epsilon^{-5/2-\vartheta} + \epsilon^{-5/2-\vartheta} \Big(\sum_j \|v'\|^2_{L^\infty([0,T]\times[j,j+1])} \Big)^{1/2}\\
         & \lesssim_{\|u_0\|_{H^s}} \epsilon^{-5/2-\vartheta}
\end{align*}
where we used \eqref{v_epsilon} and \eqref{apriori}. Hence, for $s>5/2$, \eqref{third} follows.\\
From \eqref{second} and \eqref{third}
$$ \frac{d}{dt}E_w(t) \lesssim \|J^sw\|^2_{L^2_x} + (o(1) + O(\|u_0\|_{H^s})\|\px^3w\|_{L^\infty_x})\|J^sw\|_{L^2_x}. $$
Hence,
\begin{equation}\label{semifinal}
\|J^sw\|_{L^\infty_TL^2_x} \lesssim_s \Big(\|J^sw_0\|_{H^s_x} + o(1) + O(\|u_0\|_{H^s})\|\px^3w\|_{L^1_TL^\infty_x}  \Big) \exp(C\|u_0\|_{H^s}).
\end{equation}
It remains to estimate  $ \| \px^3w(t)\|_{L^1_TL^\infty_x} $.\\
For this, we argue as the proof of the \emph{a priori} bound. Set
$$
 g_T = \| \px^3w\|_{L^2_TL^\infty_x} + \Big(\sum_j \|w\|^2_{L^\infty([0,T]\times[j,j+1])} \Big)^{1/2}.
$$
Repeating the proof of Lemma~\ref{localsmoothing} and using \eqref{apriori} and \eqref{w-eq}, we get the local smoothing estimate for $ w $:

\begin{equation}\label{wlocalsmoothing}
\Big( \int^T_0\,\int_j^{j+1} |D^{s-2}\px^4w|^2 dxdt \Big) \lesssim_{\|u_0\|_{H^s_x}} \big(1+ \|\px^3w\|_{L^2_TL^\infty_x}\big)\sup_{[0,T]}\|J^sw\|^2_{L^2_x}.
\end{equation}
Using Lemma~\ref{refinedstrichartz} and \eqref{w-eq} for small $\|u_0\|_{H^s}$,
\begin{align*}
\|\px^3w\|_{L^2_TL^\infty_x} &\lesssim \|J^sw\|_{L^\infty_TL^2_x} + \|D^{s-1}(v\px^3w)\|_{L^2_TL^2_x} +\|D^{s-1}(w\px^3v')\|_{L^2_TL^2_x} \\
                  & + \|v\px^3w\|_{L^2_TL^2_x} +\|w\px^3v'\|_{L^2_TL^2_x} \\
    & \lesssim   \|J^sw\|_{L^\infty_TL^2_x} + O(\|u_0\|_{H^s})\cdot \|\px^3w\|_{L^2_TL^\infty_x} +
    \|D^{s-1}v \|_{L^\infty_TL^2_x}\|\px^3w\|_{L^2_TL^\infty_x}\\
    & + \|v \cdot D^{s-1}\px^3w\|_{L^2_TL^2_x}
    +  \|D^{s-1}w \|_{L^\infty_TL^2_x}\|\px^3v'\|_{L^2_TL^\infty_x} + \|w \cdot D^{s-1}\px^3v'\|_{L^2_TL^2_x}\\
  & \lesssim \|J^sw\|_{L^\infty_TL^2_x} + \|v \cdot D^{s-1}\px^3w\|_{L^2_TL^2_x} + \|w \cdot D^{s-1}\px^3v'\|_{L^2_TL^2_x}.
\end{align*}
Lemma~\ref{localsmoothing}, \eqref{apriori} and \eqref{wlocalsmoothing} yield
\begin{align*}
\|v \cdot D^{s-1}\px^3w\|_{L^2_TL^2_x} & \lesssim \Big(\sum_j \|v\|^2_{L^\infty([0,T]\times[j,j+1])} \Big)^{1/2}
       \Big(\sup_j \|D^{s-1}\px^3w\|_{L^2([0,T]\times[j,j+1])}   \Big)\\
       & \leq  O(\|u_0\|_{H^s_x})\,g_T,
\end{align*}

\begin{align*}
\|w \cdot D^{s-1}\px^3v'\|_{L^2_TL^2_x} &\lesssim \Big(\sum_j \|w\|^2_{L^\infty([0,T]\times[j,j+1])} \Big)^{1/2}
       \Big(\sup_j \|D^{s-1}\px^3v'\|_{L^2([0,T]\times[j,j+1])}   \Big)\\
       & \leq  O(\|u_0\|_{H^s_x})\, g_T.\\
\end{align*}
Using the maximal function estimate (Lemma~\ref{maximal})
\begin{align*}
\Big(\sum_j \|w\|^2 &_{L^\infty([0,T]\times[j,j+1])} \Big)^{1/2} \lesssim \|J^sw\|_{L^\infty_TL^2_x} + \|D^{5/4+\eta}(v\px^3w)\|_{L^2_TL^2_x} +\|D^{5/4+\eta}(w\px^3v')\|_{L^2_TL^2_x} \\
                  & + \|v\px^3w\|_{L^2_TL^2_x} +\|w\px^3v'\|_{L^2_TL^2_x} \\
    & \lesssim \|J^sw\|_{L^\infty_TL^2_x} + O(\|u_0\|_{H^s_x})\,g_T.
\end{align*}
Hence $$ g_T \lesssim_s  \|J^sw\|_{L^\infty_TL^2_x} + O(\|u_0\|_{H^s_x}) g_T. $$
By choosing $\|u_0\|_{H^s}$ sufficiently small, we have 
$$ g_T \lesssim  \big(1+O(\|u_0\|_{H^s})\big) \|w\|_{L^\infty_TH^s_x}. $$
Plugging this into \eqref{semifinal}, we obtain 
\begin{equation}\label{w-final}
\|J^sw\|_{L^\infty_TL^2_x} \lesssim_s \Big(\|J^sw_0\|_{H^s_x} + o(1) + O(\|u_0\|_{H^s}) \|J^sw\|_{L^\infty_TL^2_x}  \Big) \exp(C\|u_0\|_{H^s}).
\end{equation}
Again choosing $\|u_0\|_{H^s}$ sufficiently small and using $\|J^sw_0\|_{H^s_x} =o(1) $,
we conclude $$ \|J^sw\|_{L^\infty_TL^2_x} =o(1). $$
We finish the proof by explaining how to handle the intermediate term $\px u \px^2u $. In view of the equation \eqref{w-eq} of the difference $w$, we need to handle $ D^{s-1}(\px v\px^2w) $ and $D^{s-1}(\px w \px^2 v')$. From the above computation for $u\px^3u$ and the identity
$$ \px^3(uw) = \px^3u w + 3\px^2u \px w + 3\px u \px^2 w + u\px^3 w,$$
it suffices to estimate $ D^{s-1}\px^3(vw) $, $D^{s-1}(\px^2w\px v) $ and $D^{s-1}(\px^2w\px v')$. By \eqref{product}, \eqref{K-P} and Sobolev embedding
\begin{align*}
\|D^{s-1}\px^3(v w)\|_{L^2_x} &\lesssim \|v\cdot D^{s-1}\px^3 w\|_{L^2_x} + \|D^{s-1}\px^3 v\cdot w\|_{L^2_x} + \|D^{s-\eta}v\|_{L^r_x}\|D^{2+\eta}w\|_{L^{r'}} \\
                            \lesssim &\Big(\sum_j \|v\|^2_{L^\infty([0,T]\times[j,j+1])} \Big)^{1/2}
       \Big(\sup_j \|D^{s-1}\px^3w\|_{L^2([0,T]\times[j,j+1])}   \Big) \\
       &+\Big(\sum_j \|w\|^2_{L^\infty([0,T]\times[j,j+1])} \Big)^{1/2}
       \Big(\sup_j \|D^{s-1}\px^3v\|_{L^2([0,T]\times[j,j+1])}   \Big)\\
       & + \|J^sv\|_{L^2_x}\|J^sw\|_{L^2_x},\\
\|D^{s-1}(\px^2w\px v)\|_{L^2_x} &\lesssim \|\px^3 w\|_{L^\infty}\|D^{s-1}u\|_{L^2_x} + \|\px^2w\|_{L^\infty_x}\|D^su\|_{L^2_x} \\
                                 &\lesssim \|\px^3 w\|_{L^\infty}\|D^{s-1}u\|_{L^2_x} + \|J^sw\|_{L^2_x}\|D^su\|_{L^2_x}
\end{align*} where $ \frac{1}{2}=\frac{1}{r} +\frac{1}{r'} $, $\eta=\frac{1}{2} -\frac{1}{r}$ and $s-2-\eta \geq \frac{1}{2}-\frac{1}{r'}$.\\
We have already handled these terms in the previous analysis.

\end{proof}

\subsection{Continuous dependence}
The proof for the continuous dependence is very similar to that of Proposition~\ref{normconvergence}. We will prove that for given $ \lambda >0 $ there exists $\delta >0 $ so that if $ \|u_0 -v_0 \|_{H^s} < \delta$, then
$$ \|u-v\|_{L^\infty_TLH^s_x} < \lambda $$
where $u$ and $v$ are the solutions with initial data $u_0$ and $v_0$, respectively. \\
From Proposition~\ref{normconvergence} it follows that there exists $\epsilon_0>0$ so that for $\epsilon <\epsilon_0$
\begin{gather*}
 \| u_\epsilon - u \|_{L^\infty_TH^s_x} < \lambda/3, \\
 \| v_\epsilon - v \|_{L^\infty_TH^s_x} < \lambda/3.
\end{gather*}
We will prove there is $ \delta > 0 $ so that if $ \|u_0-v_0\| < \delta $ ,then for some $\epsilon <\epsilon_0$,
$$\label{westimate} \| u_\epsilon -v_\epsilon \|_{L^\infty_TH^s_x} < \lambda/3. $$
Define $ w = v_\epsilon -u_\epsilon $. Note $ \|w_0\|_{H^s} = \| (u_0-v_0)_\epsilon \|_{H^s} \sim \|u_0-v_0\|_{H^s_x}=\delta $ for small $\epsilon$.
The proof follows basically that of Proposition~\ref{normconvergence}. The equation for the difference is
$$\label{wepsilon-eq} \partial_t w + \px^5 w + v_\epsilon \px^3 w + w\px^3 u_\epsilon =0. $$
As before we consider the time derivative of the modified energy
$$ E_w(t) = \|D^w(t)\|^2_{L^2_x} + a_s\int v_\epsilon D^{s-2}\px w D^{s-2}\px w $$
with $E_w(0) = O(\delta) $. Following the previous argument one can easily check everything works \emph{except} one step, where we used
 \eqref{w_epsilon} and \eqref{w-epsilon-persistent} in estimating \eqref{second1}
 $$ \|w(t) \|_{L^2_x} \lesssim \|w_0\|_{L^2}  \nleq  o(\epsilon^{s}), $$
since $w_0$ is no longer of form $ f-f_\epsilon $. But since $ \|w_0\|_{L^2}\sim\|u_0-v_0\|_{L^2}$, after fixing $\epsilon >0 $ we can choose $ \delta= \delta(\epsilon) >0$ so that
$$ \|w(t) \|_{L^\infty_TL^2_x}\|D^s\px^3v_\epsilon\|_{L^2_TL^\infty_x} \lesssim  \delta \epsilon^{-s} \leq \lambda/ 10.  $$
This completes the proof of the local well-posedness for $s>\frac{5}{2}$.\\

\section{The lack of uniform continuity of the solution map}\label{illproof}

In this section we prove Theorem~\ref{ill-posed}. As mentioned before, the method used here is introduced by Koch and Tzvetkov \cite{koch-tzvetkov2}.
\subsection{Preliminaries}

We provide several preliminary results and definitions used later analysis.
Let $\phi \in C^\infty_0(\mathbb{R})$ be a bump function such that $\phi(x)=1$ for $|x|<1$ and $\phi(x)=0$ for $ |x|>2 $. Let $\phi^w \in C^\infty_0(\mathbb{R})$ be a wider bump function such that $\phi^w(x) =1 $ on the support of $\phi$. Note $\phi\,\phi^w=\phi $. For $0<\delta<1 $ and $\lambda \geq 1$, we set
$$ \phi_\lambda(x) :=\phi(\frac{x}{\lambda^{4+\delta}}) \qquad ,\phi^w_\lambda(x) :=\phi^w(\frac{x}{\lambda^{4+\delta}}). $$
In our example we exploit low frequency perturbation in high frequency wave. So, our example has a low frequency mass and a high frequency wave on the almost same support. We define the low frequency initial data
$$ u_{low}(0,x) = - \Lambda \omega \lambda^{-3}\phi^w\lambda(x), \qquad \quad \omega = \pm 1. $$
Let $u_{low}(t,x)$ be the solution to \eqref{fifth} with initial data $u_{low}(0,x) $.\\
Next, define the approximate solution
\begin{align*}
\label{apsol}
u_{ap}(t,x) & := u_{low}(t,x) - \Lambda \lambda^{-\frac{4+\delta}{2} -s}
\phi_\lambda(x) \cos ( \lambda x - \lambda^5 t - \omega t) \\
& =: u_{low}(t,x)  + u_{hi}(t,x).
\end{align*}
Note $ \|u_0\|_{H^s} = O(\Lambda) $ uniformly in $\lambda$, we can choose $ \Lambda >0 $ sufficiently small so as to satisfy all smallness conditions on $\|u_0\|_{H^s} $ required in the analysis of the local well-posedness for $s>\frac{5}{2}$. By doing so, for $ \frac{5}{2} <\sigma < s $ we guarantee the solution $u(t,x)$ exists for $ 0 \leq t \leq 1 $
and
\begin{equation}\label{u-lwp}
\| u\|_{L^\infty_TH^\sigma_x} + \|\px^3u\|_{L^2_TL^\infty} \lesssim \|u_0\|_{H^\sigma} \lesssim \lambda^{\sigma-s}.  \end{equation}
By direct computations we also have
\begin{equation}\label{uap-lwp}
 \| u_{ap}\|_{L^\infty_TH^\sigma_x} + \|\px^3u_{ap}\|_{L^2_TL^\infty} \lesssim \lambda^{-\frac{2-\delta}{2}} + \lambda^{\sigma-s} + \lambda^{\frac{2-\delta}{2}-s}.  \end{equation}
\begin{lemma}\label{pre1}
Let $s \geq 0, 0<\delta $ and $ \alpha \in \mathbb{R}$. Then,
$$  \lim_{\lambda \rightarrow \infty} \lambda^{-\frac{4+\delta}{2} -s }\|\phi_\lambda(x)\sin(\lambda x +\alpha)   \|_{H^s_x} = c \| \phi\|_{L^2_x}. $$
\end{lemma}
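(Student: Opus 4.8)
The plan is to pass to the Fourier side. Writing $f_\lambda:=\phi_\lambda(\cdot)\sin(\lambda\cdot+\alpha)$ and using $\sin\theta=\tfrac1{2i}(e^{i\theta}-e^{-i\theta})$ together with the scaling relation $\widehat{\phi_\lambda}(\xi)=\lambda^{4+\delta}\widehat{\phi}(\lambda^{4+\delta}\xi)$, the Fourier transform
$$\widehat{f_\lambda}(\xi)=\tfrac1{2i}\big(e^{i\alpha}\widehat{\phi_\lambda}(\xi-\lambda)-e^{-i\alpha}\widehat{\phi_\lambda}(\xi+\lambda)\big)$$
is a superposition of two bumps of width $\sim\lambda^{-(4+\delta)}$ centred at $\xi=\pm\lambda$. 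By Plancherel, $\|f_\lambda\|_{H^s_x}^2=\int(1+\xi^2)^s|\widehat{f_\lambda}(\xi)|^2\,d\xi$ splits into two diagonal terms $\tfrac14\int(1+\xi^2)^s|\widehat{\phi_\lambda}(\xi\mp\lambda)|^2\,d\xi$ and a cross term $R_\lambda$. The point is that on the effective support of $\widehat{\phi_\lambda}(\xi\mp\lambda)$ the weight $(1+\xi^2)^s$ is, after dividing by $\lambda^{2s}$, asymptotically $1$, so each diagonal term behaves like $\tfrac14\lambda^{2s}\|\widehat{\phi_\lambda}\|_{L^2}^2=\tfrac14\lambda^{2s}\lambda^{4+\delta}\|\widehat{\phi}\|_{L^2}^2$.

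First I would dispose of $R_\lambda$: since $\phi\in C_0^\infty$, $\widehat{\phi}$ is Schwartz, so for every $N$ one has $|\widehat{\phi_\lambda}(\xi-\lambda)\widehat{\phi_\lambda}(\xi+\lambda)|\lesssim_N\lambda^{2(4+\delta)}(1+\lambda^{4+\delta}|\xi-\lambda|)^{-N}(1+\lambda^{4+\delta}|\xi+\lambda|)^{-N}$; because for $\lambda\ge1$ at least one of $|\xi\mp\lambda|$ is $\ge\lambda$, one of these factors is $\lesssim\lambda^{-(5+\delta)N}$, and integrating against $(1+\xi^2)^s$ gives $R_\lambda=O(\lambda^{-M})$ for every $M$, hence $\lambda^{-(4+\delta)-2s}R_\lambda\to0$. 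For a diagonal term I would substitute $\xi=\pm\lambda+\lambda^{-(4+\delta)}\eta$ to obtain
$$\frac14\int(1+\xi^2)^s|\widehat{\phi_\lambda}(\xi\mp\lambda)|^2\,d\xi=\frac14\,\lambda^{4+\delta}\int\big(1+(\pm\lambda+\lambda^{-(4+\delta)}\eta)^2\big)^s|\widehat{\phi}(\eta)|^2\,d\eta,$$
divide by $\lambda^{4+\delta+2s}$, and apply dominated convergence: the integrand tends pointwise to $|\widehat{\phi}(\eta)|^2$ since $\lambda^{-2s}(1+(\pm\lambda+\lambda^{-(4+\delta)}\eta)^2)^s\to1$ for each fixed $\eta$, and for $s\ge0$ it is dominated by $C(1+|\eta|)^{2s}|\widehat{\phi}(\eta)|^2\in L^1$. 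Summing the two diagonal limits, taking square roots, and using $\|\widehat{\phi}\|_{L^2}^2=c_0^2\|\phi\|_{L^2}^2$ (with $c_0$ the Plancherel constant of the chosen Fourier normalization, e.g. $c_0=1$ in the unitary convention), one arrives at $\lim_{\lambda\to\infty}\lambda^{-(4+\delta)/2-s}\|f_\lambda\|_{H^s_x}=\tfrac{c_0}{\sqrt2}\|\phi\|_{L^2_x}$, which is the asserted identity with $c=c_0/\sqrt2$.

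I expect no serious obstacle here; the lemma is essentially the quantitative statement that $\widehat{f_\lambda}$ concentrates at $\pm\lambda$ at scale $\lambda^{-(4+\delta)}$. The one step that needs a line of care is the domination bound in the last display, which follows from $|\lambda^{-(4+\delta)}\eta|\le|\eta|$ and hence $1+(\lambda+|\eta|)^2\le2(1+\lambda^2)(1+|\eta|)^2$ for $\lambda\ge1$; once this is in place the convergence is a routine application of the dominated convergence theorem, and the separation of the two bumps (used to kill $R_\lambda$ and to add the diagonal contributions without overlap) is automatic for $\lambda\ge1$.
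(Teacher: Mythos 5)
Your Fourier-side argument is correct and is the natural (indeed the standard) proof of this concentration lemma: write $\sin$ as a sum of two exponentials, observe that $\widehat{f_\lambda}$ is two bumps of width $\lambda^{-(4+\delta)}$ at $\pm\lambda$, kill the cross term by Schwartz decay and separation (strictly one should split $\mathbb{R}_\xi$ into the regions where $|\xi-\lambda|\ge\lambda$ and $|\xi+\lambda|\ge\lambda$ before invoking the pointwise bound, but this is routine), and handle each diagonal term by the change of variable $\xi=\pm\lambda+\lambda^{-(4+\delta)}\eta$ together with dominated convergence, for which your domination $1+(\lambda+|\eta|)^2\le 2(1+\lambda^2)(1+|\eta|)^2$ is adequate. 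The paper itself does not prove this lemma but merely cites Koch--Tzvetkov (Lemma~2.3 of \cite{koch-tzvetkov2}); your argument is, to the best of my knowledge, the same one found there, so there is nothing substantive to compare.
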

\begin{proof}
See \cite{koch-tzvetkov2} Lemma 2.3.
\end{proof}

Theorem~\ref{ill-posed} is a corollary of the following proposition, showing that the approximate solution constructed above is a good approximate solution in the $H^s$ sense.
\begin{proposition}\label{keyapp}
Let $ \max(0 , 2-2s) < \delta < 2$. Let $u_{\omega,\lambda}$ be the unique 
solution to the equation \eqref{fifth} with initial data
$$ u_{\omega,\lambda} (0,x) = - \omega \lambda^{-3}\phi^w_\lambda(x) -
\lambda^{- \frac{4+\delta}{2} - s}\phi_\lambda(x) \cos \lambda x  $$
and $ u_{ap}(t,x) $ be as defined above.

Then for $s>\frac{5}{2}$
\begin{equation}\label{keyapp1}
  \| u_{\omega,\lambda} - u_{ap} \|_{H^s_x}  =  o(1)
\end{equation}
holds true for $|t|<1$ as $\lambda \rightarrow \infty $. \\
Moreover, if the equation \eqref{fifth} satisfies that for all $ t$,
\begin{itemize}
\item the $L^2$ conservation law in the sense that  $\|u(t)\|_{L^2_x} =\|u(0)\|_{L^2} $ 
\item an $H^3$ conservation law in the sense that
$$  \|u(t)\|_{H^3_x}  \lesssim \|u(0)\|_{H^3_x}  \qquad \text{for small}\quad \|u\|_{H^s_x},   $$ 
\end{itemize}
then \eqref{keyapp1} holds true for $ s>0$.\\
\end{proposition}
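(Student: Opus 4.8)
The plan is to set $v:=u_{\omega,\lambda}-u_{ap}$, to check that $v$ solves \eqref{fifth} up to a controllably small forcing, and to close an energy-type estimate for $v$ starting from $v(0)=0$ (the initial data of $u_{\omega,\lambda}$ and $u_{ap}$ coincide, the constant $\Lambda$ being irrelevant and absorbed into implicit constants). First I would compute
\[
 F:=\partial_t u_{ap}+\px^5u_{ap}+c_1\px u_{ap}\px^2u_{ap}+c_2u_{ap}\px^3u_{ap}.
\]
Because $u_{low}$ is an exact solution of \eqref{fifth}, every term built only from $u_{low}$ drops out, and $F$ reduces to $\partial_tu_{hi}+\px^5u_{hi}$ plus the interactions of $u_{hi}$ with $u_{low}$ and with itself. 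The role of the phase $\lambda x-\lambda^5t-\omega t$ is that the $\px^5$ part cancels the dispersive $\lambda^5$ and leaves a term $\sim\omega\phi_\lambda\sin(\cdot)$ of the same size as the leading part of $c_2u_{low}(0)\px^3u_{hi}$ — on $\mathrm{supp}\,\phi_\lambda$ one has $\phi^w_\lambda\equiv1$, so $u_{low}(0)$ is literally the constant $-\Lambda\omega\lambda^{-3}$ there — and the phase shift is tuned precisely so that these two cancel (it records the transport effect of the low-frequency background on the high-frequency oscillation). After this cancellation $F$ is a sum of genuinely lower order pieces: (i) the terms of $\px^5u_{hi}$ in which at least one derivative lands on $\phi_\lambda$, of size $O(\lambda^{-\delta})$ in $H^s$; (ii) the high--high self-interactions $c_2u_{hi}\px^3u_{hi}$ and $c_1\px u_{hi}\px^2u_{hi}$, which oscillate at frequency $2\lambda$ and have size $O(\lambda^{1-\delta/2-s})$ in $H^s$ and $O(\lambda^{1-\delta/2-2s})$ in $L^2$; and (iii) the discrepancy $c_2\bigl(u_{low}(t)-u_{low}(0)\bigr)\px^3u_{hi}$ together with the remaining low-frequency cross terms, which are $O(\lambda^{-N})$ for every $N$ since $\partial_tu_{low}$ is smaller than any power of $\lambda$ (the data of $u_{low}$ lives at frequency $\sim\lambda^{-(4+\delta)}$). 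Summing, $\|F\|_{L^1_{[0,1]}H^s_x}=o(1)$ for $s>\tfrac52$, and $\|F\|_{L^1_{[0,1]}L^2_x}=o(\lambda^{-s})$ precisely in the range $\max(0,2-2s)<\delta<2$; this is the only place the hypothesis on $\delta$ enters.

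For $s>\tfrac52$ I would then run the modified-energy method of Section~\ref{menergy}. The difference obeys
\[
\partial_tv+\px^5v+c_1(\px u_{ap}\px^2v+\px v\px^2u_{ap}+\px v\px^2v)+c_2(u_{ap}\px^3v+v\px^3u_{ap}+v\px^3v)=-F,
\]
and I would estimate $\tfrac{d}{dt}\bigl(\|D^sv\|_{L^2}^2+\|v\|_{L^2}^2+a_s\int u_{ap}D^{s-2}\px v\,D^{s-2}\px v\bigr)$ exactly as in Proposition~\ref{energy}: the correction term absorbs the derivative loss from $c_2u_{ap}\px^3v$, Lemma~\ref{commutator} handles the remaining top-order pieces, and the forcing $-F$ contributes only $\|F\|_{H^s_x}\|v\|_{H^s_x}$. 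The surviving coefficients are $\|\px^3u_{\omega,\lambda}\|_{L^\infty_x}$, $\|\px^3u_{ap}\|_{L^\infty_x}$ and $\|\px^3v\|_{L^\infty_x}$: the first two tend to $0$ in $L^2_TL^\infty_x$ by \eqref{u-lwp} and \eqref{uap-lwp}, and the third is closed by a coupled refined-Strichartz/local-smoothing estimate for $\|\px^3v\|_{L^2_TL^\infty_x}$ as in the proof of Proposition~\ref{normconvergence} (being quadratic in the small quantity $v$, a bootstrap applies). Gronwall then gives $\|v(t)\|_{H^s_x}\lesssim\bigl(\|v(0)\|_{H^s_x}+\|F\|_{L^1_TH^s_x}\bigr)e^{C\int(\cdots)}=o(1)$, i.e.\ \eqref{keyapp1}.

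For $0<s\le\tfrac52$ well-posedness is unavailable, so I would substitute the two conservation laws for the a priori bounds. The $H^3$ bound yields $\|u_{\omega,\lambda}(t)\|_{H^3_x}\lesssim\|u_{\omega,\lambda}(0)\|_{H^3_x}\lesssim\lambda^{3-s}$ (applied along a continuity argument keeping $\|u_{\omega,\lambda}(t)\|_{H^s_x}$ small), and a direct computation gives the same bound for $u_{ap}(t)$, hence $\|v(t)\|_{H^3_x}\lesssim\lambda^{3-s}$. Since $s<3$, it therefore suffices to prove $\|v(t)\|_{L^2_x}=o(\lambda^{-s})$, for then
\[
\|v(t)\|_{H^s_x}\lesssim\|v(t)\|_{L^2_x}^{1-s/3}\,\|v(t)\|_{H^3_x}^{s/3}=o\bigl(\lambda^{-s(1-s/3)}\bigr)\cdot O\bigl(\lambda^{(3-s)s/3}\bigr)=o(1).
\]
To get the $L^2$ bound I would not isolate the dangerous term $u_{\omega,\lambda}\px^3v$ at all, but differentiate $\|v\|_{L^2}^2=\|u_{\omega,\lambda}\|_{L^2}^2-2\langle u_{\omega,\lambda},u_{ap}\rangle+\|u_{ap}\|_{L^2}^2$ directly: the $\px^5$ terms cancel by integration by parts, $\|u_{\omega,\lambda}\|_{L^2}$ is conserved, and, writing the nonlinearity as $B(f,g):=c_1\px f\px^2g+c_2f\px^3g$, the $L^2$ conservation law is the statement $\langle B(w,w),w\rangle=0$, which annihilates every cubic-in-$u_{ap}$ contribution. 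What is left is: terms linear in $v$ with two factors of $u_{ap}$ (forcing-type, bounded after integration by parts by norms $\|\px^ju_{ap}\|_{L^\infty_x}$ that are $o(\lambda^{-s})$ under the hypothesis on $\delta$), the quadratic term $\langle B(v,v),u_{ap}\rangle$ (bounded after integration by parts by $\|\px^3u_{ap}\|_{L^\infty_x}\|v\|_{L^2_x}^2+\|\px u_{ap}\|_{L^\infty_x}\|v\|_{H^1_x}^2$ with $\|v\|_{H^1_x}\lesssim\|v\|_{L^2_x}^{2/3}\lambda^{(3-s)/3}$), and $-2\langle F,v\rangle$. All the $\|\px^ju_{ap}\|_{L^\infty_x}$ tend to $0$ for exactly the same reason ($\delta>\max(0,2-2s)$), so a Gronwall argument run with the bootstrap hypothesis $\|v\|_{L^\infty_TL^2_x}\le\lambda^{-s}$ gives $\|v(t)\|_{L^2_x}\lesssim\lambda^{1-\delta/2-2s}+\|F\|_{L^1_TL^2_x}+o(\lambda^{-s})=o(\lambda^{-s})$, closing the bootstrap.

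The main obstacle is the bookkeeping of the first paragraph: one must verify that the phase modulation annihilates the one resonant interaction and then that every surviving piece of $F$ is small in the norm used ($H^s$ when $s>\tfrac52$, $L^2$ at the rate $\lambda^{-s}$ otherwise) — a single mis-estimated $O(1)$ term in $H^s$ would wreck the whole scheme, and it is precisely the balance in these estimates that forces $\delta>\max(0,2-2s)$. The second delicate point is the low-regularity $L^2$ estimate, where one has no $L^\infty$ control of $\px^3v$ or $\px^3u_{\omega,\lambda}$; the way around it is to differentiate $\|v\|_{L^2}^2$ through the conserved norm of $u_{\omega,\lambda}$, the explicitly computable norm of $u_{ap}$, and the cross term, letting the algebraic identity $\langle B(w,w),w\rangle=0$ carry out the cancellation.
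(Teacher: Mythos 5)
Your proposal is correct and follows the paper's architecture essentially step for step: the decomposition of the error term $F$ with the resonant cancellation between $\partial_t\cos(\lambda x-\lambda^5t-\omega t)$ and $c_2u_{low}(0)\partial_x^3u_{hi}$ (the paper's $F_4$), the bounds $\|F\|_{L^2}\lesssim\lambda^{-s-\delta}+\lambda^{\frac{2-\delta}{2}-2s}=o(\lambda^{-s})$ and $\|F\|_{H^s}=o(1)$ under $\delta>\max(0,2-2s)$, the modified-energy/refined-Strichartz machinery of Sections~\ref{menergy}--\ref{lwpproof} for $s>\tfrac52$, and the $L^2$ plus $H^3$ conservation laws combined with the interpolation $\|w\|_{H^s}\lesssim\|w\|_{H^3}^{s/3}\|w\|_{L^2}^{(3-s)/3}$ for $0<s\le\tfrac52$. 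The one place you genuinely depart from the paper is the low-regularity $L^2$ persistence: you expand $\|v\|_{L^2}^2=\|u\|_{L^2}^2-2\langle u,u_{ap}\rangle+\|u_{ap}\|_{L^2}^2$ and let the polarization of the null form $\langle B(w,w),w\rangle=0$ kill the cubic terms, whereas the paper substitutes $u=w+u_{ap}$ into the difference equation, observes that the pure-$w$ cubic terms integrate to zero, and absorbs the remaining $\int\px u_{ap}\,\px w\,\px w$-type terms with the $J^{-1}$-weighted correction of \eqref{w-L2}; your polarization route is slightly cleaner algebraically (the terms linear in $v$ and quadratic in $u_{ap}$ in fact vanish identically rather than merely being small), at the cost of having to track the quadratic term $\langle B(v,v),u_{ap}\rangle$ through the $H^1$ interpolation bound, which you do correctly. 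One small point worth making explicit in the $s>\tfrac52$ case: the commutator estimate for $D^s(v\,\px^3u_{ap})$ leaves the piece $\|v\|_{L^2}\|D^{s+3}u_{ap}\|_{L^\infty}$, which is \emph{not} controlled by the coefficients $\|\px^3u_{ap}\|_{L^\infty}$ you list; it is exactly here that the $L^2$ smallness $\|v\|_{L^2}=O(\lambda^{-s-\beta})$ must be fed in against $\|D^{s+3}u_{ap}\|_{L^\infty}\lesssim\lambda^{1-\delta/2}\le\lambda^s$, as in \eqref{second1}--\eqref{third} of Proposition~\ref{normconvergence}, so the $L^2$ estimate must be run \emph{before} the $H^s$ one in that case too.
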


\begin{proof}[(Proposition~\ref{keyapp} implies Theorem~\ref{ill-posed})]
By choosing $\omega = \pm 1$ we obtain two sequences of initial data
$$ u_\lambda^\pm(0,x) = \mp \lambda^{-3}\phi^w_\lambda(x) - \lambda^{-\frac{4+\delta}{2} -s}
\phi_\lambda(x) \cos ( \lambda x ). $$
Using Proposition~\ref{keyapp}
\begin{align*}
\| u_\lambda^+(t) - u_\lambda^-(t) \|_{H^s} &= \lambda^{-\frac{4+\delta}{2} -s } \| \phi_\lambda(x)[\cos(\lambda x -\lambda^5 t +t)- \cos(\lambda x -\lambda^5 t -t)  ]\|_{H^s_x} + o(1)\\
  & 2 \lambda^{-\frac{4+\delta}{2} -s } \| \phi_\lambda(x)[\sin(\lambda x -\lambda^5 t )  ]\|_{H^s_x}|\sin t| + o(1)\\
\end{align*}
where $|t|<1 $.\\
From Lemma~\ref{pre1}
$$ \lim_{\lambda \rightarrow \infty} \| u_\lambda^+(t) - u_\lambda^-(t) \|_{H^s} \geq c |\sin t|.  $$
Since $\sin t \sim t$ for $ 0\leq t \leq 1$, we complete the proof.
\end{proof}

In the following lemma, we provide several bounds for $u_{low}(t,x)$

\begin{lemma} \label{lowbound}
Let $K$ be a given positive integer.
Let $ K-2-s \geq k \geq 0 $. \\
Then the following estimates hold :
\begin{gather}
\label{low1} \| \partial^k_x u_{low}(t,\cdot)\|_{L^2(\mathbb{R})}
\lesssim_K \lambda^{-\frac{2-\delta}{2}- k(4+\delta)} \\
\label{low2}
\|\partial_x^k u_{low}(t,\cdot)\|_{L^\infty(\mathbb{R})}
\lesssim_K  \lambda^{-3-k(4+\delta)} \\
\label{low3}  \|u_{low}(t,\cdot) - u_{low}(0,\cdot) \|_{L^2(\mathbb{R})} \lesssim_K
\lambda^{-15-3\delta}
\end{gather}

\end{lemma}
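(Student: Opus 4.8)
The plan is to prove Lemma~\ref{lowbound} by reducing $u_{low}$ to a \emph{unit-scale} solution of \eqref{fifth} via the scaling symmetry, and then invoking Theorem~\ref{lwp} — its lifespan bound, its a priori bound, and persistence of higher Sobolev regularity — on a time interval whose length is controlled precisely by the hypothesis $\delta<2$. No estimate beyond those already in the paper is needed. First I would record the $t=0$ bounds directly: since $\phi^w_\lambda(x)=\phi^w(\lambda^{-(4+\delta)}x)$, one has $\|\px^k\phi^w_\lambda\|_{L^2}=\lambda^{\frac{4+\delta}{2}-k(4+\delta)}\|\px^k\phi^w\|_{L^2}$ and $\|\px^k\phi^w_\lambda\|_{L^\infty}=\lambda^{-k(4+\delta)}\|\px^k\phi^w\|_{L^\infty}$, and multiplying by the amplitude $\Lambda\lambda^{-3}$ and using $-3+\tfrac{4+\delta}{2}=-\tfrac{2-\delta}{2}$ gives \eqref{low1}, \eqref{low2} at $t=0$. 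Since \eqref{fifth} is invariant under $u(t,x)\mapsto\mu^{-2}u(\mu^{-5}t,\mu^{-1}x)$, uniqueness in Theorem~\ref{lwp} forces
$$ u_{low}(t,x)=\lambda^{-2(4+\delta)}\,U\big(\lambda^{-5(4+\delta)}t,\ \lambda^{-(4+\delta)}x\big), $$
where $U$ solves \eqref{fifth} with the unit-scale datum $U(0,x)=-\Lambda\omega\lambda^{5+2\delta}\phi^w(x)$, so $\|U(0)\|_{H^K}\lesssim_K\lambda^{5+2\delta}$.

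Next I would control the lifespan of $U$. By Theorem~\ref{lwp} (its scaling gives the lifespan $\gtrsim\|U(0)\|_{H^\sigma}^{-10/3}$ and the a priori bound $\|U\|_{L^\infty_\tau H^\sigma_x}\lesssim\|U(0)\|_{H^\sigma}$ for $\tfrac52<\sigma$, and persistence of regularity propagates this to $H^K$), $U$ exists on $[0,c\lambda^{-\frac{10}{3}(5+2\delta)}]$ with $\sup_\tau\|U(\tau)\|_{H^K}\lesssim_K\lambda^{5+2\delta}$. The key arithmetic point is that $\tfrac{10}{3}(5+2\delta)\le 5(4+\delta)$ \emph{precisely when} $\delta\le2$, so $U$ in fact survives on the longer interval $[0,\lambda^{-5(4+\delta)}]$, which is exactly the range of $\tau$ corresponding to $t\in[0,1]$ after the rescaling.

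Then I would rescale back. For $\tau=\lambda^{-5(4+\delta)}t\in[0,\lambda^{-5(4+\delta)}]$ and $0\le k\le K$, the scaling identity gives $\|\px^k u_{low}(t)\|_{L^2}=\lambda^{-\frac32(4+\delta)-k(4+\delta)}\|\px^k U(\tau)\|_{L^2}\lesssim_K\lambda^{-\frac32(4+\delta)-k(4+\delta)}\cdot\lambda^{5+2\delta}=\lambda^{-\frac{2-\delta}{2}-k(4+\delta)}$, which is \eqref{low1}, and $\|\px^k u_{low}(t)\|_{L^\infty}=\lambda^{-(k+2)(4+\delta)}\|\px^k U(\tau)\|_{L^\infty}\lesssim_K\lambda^{-(k+2)(4+\delta)}\|U(\tau)\|_{H^{k+1}}\lesssim_K\lambda^{-3-k(4+\delta)}$, which is \eqref{low2}. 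For \eqref{low3} I would instead estimate the time increment directly in the original variables, using the bounds \eqref{low1}--\eqref{low2} just obtained: $\|u_{low}(t)-u_{low}(0)\|_{L^2}\le\int_0^t\|\px^5 u_{low}+c_1\px u_{low}\px^2 u_{low}+c_2 u_{low}\px^3 u_{low}\|_{L^2}\,dt'$, where the dominant contributions are the quadratic terms $\|u_{low}\|_{L^\infty}\|\px^3 u_{low}\|_{L^2}$ and $\|\px u_{low}\|_{L^\infty}\|\px^2 u_{low}\|_{L^2}$, each of size $O(\lambda^{-16-\frac52\delta})$; and $\lambda^{-16-\frac52\delta}\le\lambda^{-15-3\delta}$ exactly because $\delta<2$.

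The real content, and the reason one is driven to the scaling reduction, is the $\lambda^{-k(4+\delta)}$ gain for each derivative: a modified-energy estimate at regularity $k$ in the spirit of Proposition~\ref{energy} is too lossy, since $u_{low}$ sits far above the scaling-critical amplitude at its own spatial scale and the correction term in $E_k$ then exceeds $\|D^k u_{low}\|_{L^2}^2$, while a Duhamel bootstrap fails because the three-derivative nonlinearity forces $\|\px^{k+3}u_{low}\|_{L^2}$ into the estimate with no spare decay. So the only steps that need genuine care are: verifying that the rescaled lifespan $[0,\lambda^{-5(4+\delta)}]$ sits inside that of $U$ — which is exactly the constraint $\delta<2$ — and checking that $K$ is large enough that the persistence-of-regularity input is available at all the orders used (e.g. up to $H^{k+1}$ for \eqref{low2} and up to $H^5$ for \eqref{low3}), which is what the hypothesis $k\le K-2-s$ is there to guarantee.
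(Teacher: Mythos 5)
Your proposal is correct and follows essentially the same route as the paper's proof: rescale $u_{low}$ to a unit-scale solution (your $U$, the paper's $v$) with datum $\sim\lambda^{5+2\delta}\phi^w$, invoke Theorem~\ref{lwp} together with persistence of higher regularity to bound it in $H^K$ on a lifespan that, precisely because $\delta<2$, covers the image of $[0,1]$ under the time rescaling, undo the scaling to get \eqref{low1}--\eqref{low2}, and obtain \eqref{low3} by bounding $\|\partial_t u_{low}\|_{L^2}$ directly from the equation and \eqref{low1}--\eqref{low2}. The explicit arithmetic you give for why $\delta<2$ is the operative constraint, and your aside about why a direct modified-energy or Duhamel argument at fixed $k$ would be lossy, are accurate commentary but not genuinely different from the paper's argument.
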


\begin{proof}
We set a rescaled function
\begin{gather*}
\label{vdef} v(t,x)
:=\lambda^{2(4+\delta)}\,u_{low}(\lambda^{5(4+\delta)}t,
\lambda^{4+\delta}x),\\ 
v(0,x) = - \omega \lambda^{-3 + 2(4+\delta)}.
\phi^w(x)
\end{gather*}
Then $ v$ is a solution to
$\eqref{fifth}$ with the initial data $ v(0,x)$. We obtain
\begin{align*}
\|v(0,\cdot)\|_{H^s} &= \lambda^{5+2\delta}\| \phi^w\|_{H^s} \leq \lambda^{5+2\delta}\|
\phi^w\|_{H^K}\\
 & \lesssim_K \lambda^{5+2\delta}
\end{align*}
 and then by Theorem~\ref{lwp}
\begin{equation*}\label{ulow:t Hs}
\|v(t,\cdot)\|_{H^s} \lesssim_K \lambda^{5+2\delta},
\end{equation*}
for $ |t| \leq \min(1,c\lambda^{-\frac{10}{3}(5+2\delta)})$ and $s
> 5/2 $. Since the right hand side of $\eqref{ulow:t Hs} $ doesn't
depend on $s$, it is true for any real $ K > s > 5/2$. By Sobolev
embedding we have
\begin{equation*}
\|\partial_x^k v(t,\cdot)\|_{L^\infty} \lesssim \lambda^{5+2\delta}
\end{equation*}
for $ |t| \leq \min(1,c\lambda^{-\frac{10}{3}(5+2\delta)}) $ and $k+s < K-2 $.\\
From $\eqref{vdef} $ we can deduce \eqref{low1}, \eqref{low2} by rescaling back
$$ \partial_x^k v(t,x) =
\lambda^{(2+k)(4+\delta)}\partial_x^k
u_{low}(\lambda^{5(4+\delta)}t, \lambda^{4+\delta}x)$$

$$ \|\partial_x^ku_{low}(\lambda^{5(4+\delta)}t,
\lambda^{4+\delta}x)\|_{L^\infty_x} \lesssim \lambda^{5+2\delta
-(2+k)(4+\delta)} $$ as long as  $|t| \leq \min(1,
\lambda^{-\frac{10}{3}(5+2\delta)}).$ Hence,

$$ \|\px^k u_{low}(t,\cdot)\|_{L^\infty_x} \lesssim
\lambda^{-3-k(4+\delta)}  $$  for $|t| \leq
\lambda^{20+5\delta-\frac{10}{3}-\frac{20}{3}\delta} $ (in
particular, for $|t| \leq 1 $). \\
Similarly, from $$ \|\partial_x^ku_{low}(\lambda^{5(4+\delta)}t,
\lambda^{4+\delta}x)\|_{L^2_x} \lesssim \lambda^{5+2\delta
-(2+k)(4+\delta)} $$
we deduce  $$\|\px^k u_{low}(t,\cdot)\|_{L^2_x}
\lesssim \lambda^{-1+\frac{1}{2}\delta -k(4+\delta)}  $$ as long as
at least $ |t| \leq 1 $.\\
From $\eqref{fifth}$
\begin{align*}
\| \partial_tu_{low}(t,\cdot)\|_{L^2}   & \lesssim
\|\partial_x^5u_{low}(t,\cdot)\|_{L^2} + \|\partial_x^3 u_{low}
\|_{L^\infty}\|u_{low}\|_{L^2} +  \\ &
\|\partial_x^2 u_{low}\|_{L^\infty}\|\partial_x u\|_{L^2}\\
   & \lesssim \lambda^{-\frac{2-\delta}{2}-5(4+\delta)} +
  \lambda^{-15-3\delta-\frac{2-\delta}{2}} +
  \lambda^{-11-2\delta-\frac{2-\delta}{2}-(4+\delta)} \\
  & \lesssim \lambda^{-15-3\delta}.
\end{align*}
Integrating in time we have  $$ \|u_{low}(t,\cdot) -
u_{low}(0,\cdot) \|_{L^2} \lesssim \lambda^{-15-3\delta} $$ for $|t|
\leq 1 $.

\end{proof}
Note that in this proof $ \lesssim $ means $\lesssim_K$. Later we use Lemma~\ref{lowbound} for some bounded $k$. Once $s $ is fixed we can choose, for instance, $K > s + 100 $ and regard as $K=K(s) $.\\

\subsection{The approximate solution}
In the following lemma we show $ u_{ap} $ defined above solves \eqref{fifth} with a small error in the $H^\sigma$ sense.

\begin{lemma}
Let $s > 0 $, $0 < \delta < 2$ and $|t| \leq 1$. Set
$$
 F:= (\partial_t + \partial^5_x)u_{ap} + c_1
u_{ap}\partial_x^3u_{ap} + c_2\partial_xu_{ap}\partial_x^2u_{ap} $$
Then,
\begin{equation}\label{F-L2}
 \| F(t,\cdot) \|_{L^2} \lesssim \lambda^{-s-\delta} +
\lambda^{\frac{2-\delta}{2}-2s}  \end{equation}

Furthermore, for $\sigma > 0$,
\begin{equation}\label{F-Hs}
  \|F(t,\cdot)\|_{H^\sigma} \lesssim \lambda^{-\delta -s +\sigma } + \lambda^{\frac{2-\delta}{2}-2s + \sigma }
  \end{equation}
\end{lemma}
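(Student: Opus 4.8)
The plan is to substitute $u_{ap}=u_{low}+u_{hi}$ into the definition of $F$ and use that $u_{low}$ solves \eqref{fifth} exactly, so that the cluster of terms involving only $u_{low}$ cancels and
\[
F=(\partial_t+\px^5)u_{hi}+c_1\bigl(u_{low}\px^3u_{hi}+u_{hi}\px^3u_{low}+u_{hi}\px^3u_{hi}\bigr)+c_2\bigl(\px u_{low}\px^2u_{hi}+\px u_{hi}\px^2u_{low}+\px u_{hi}\px^2u_{hi}\bigr).
\]
I would split these into three groups: the linear error $(\partial_t+\px^5)u_{hi}$; the ``low--high'' mixed bilinear terms; and the ``high--high'' self-interactions $u_{hi}\px^3u_{hi}$, $\px u_{hi}\px^2u_{hi}$. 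Throughout I would use the explicit form $u_{hi}=-\lambda^{-\frac{4+\delta}{2}-s}\phi_\lambda(x)\cos\theta$, $\theta=\lambda x-\lambda^5t-\omega t$, so that applying $\px$ to $u_{hi}$ costs a factor $\lambda$ when it lands on $\cos\theta$ but only $\lambda^{-(4+\delta)}$ when it lands on $\phi_\lambda$; in particular $\|\px^ku_{hi}\|_{L^2}\lesssim\lambda^{k-s}$ and $\|\px^3u_{hi}\|_{L^\infty}\lesssim\lambda^{\frac{2-\delta}{2}-s}$. For $u_{low}$ I would quote Lemma~\ref{lowbound}: each $\px$ costs $\lambda^{-(4+\delta)}$, with $\|\px^ku_{low}\|_{L^\infty}\lesssim\lambda^{-3-k(4+\delta)}$ and $\|u_{low}(t)-u_{low}(0)\|_{L^2}\lesssim\lambda^{-15-3\delta}$.

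The high--high self-interactions I would dispatch by direct computation: $u_{hi}\px^3u_{hi}$ equals $C\lambda^{3-(4+\delta)-2s}\phi_\lambda^2\cos\theta\sin\theta$ up to lower-order terms carrying a derivative on $\phi_\lambda$, so $\|u_{hi}\px^3u_{hi}\|_{L^2}\lesssim\lambda^{3-(4+\delta)-2s}\|\phi_\lambda^2\|_{L^2}\lesssim\lambda^{\frac{2-\delta}{2}-2s}$ since $\|\phi_\lambda^2\|_{L^2}\sim\lambda^{\frac{4+\delta}{2}}$, and the same for $\px u_{hi}\px^2u_{hi}$; this is exactly the second term in \eqref{F-L2}. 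Of the four mixed bilinear terms, the three that carry at least one derivative on $u_{low}$ are harmless: Lemma~\ref{lowbound}, H\"older, and the bounds above give e.g. $\|u_{hi}\px^3u_{low}\|_{L^2}\lesssim\|u_{hi}\|_{L^2}\|\px^3u_{low}\|_{L^\infty}\lesssim\lambda^{-s}\lambda^{-3-3(4+\delta)}$, far below $\lambda^{-s-\delta}$, and similarly for $\px u_{hi}\px^2u_{low}$ and $\px u_{low}\px^2u_{hi}$.

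The core of the argument, and the step I expect to be the main obstacle, is the cancellation of the linear error against the one remaining dangerous term $u_{low}\px^3u_{hi}$ (appearing with the coefficient of $u\px^3u$), in which all three derivatives fall on $u_{hi}$ and $u_{low}$ behaves like the constant $-\omega\lambda^{-3}$. I would argue in three steps. First: in $(\partial_t+\px^5)u_{hi}$, when no derivative falls on $\phi_\lambda$, the free dispersion relation $\tau=\xi^5$ kills the $O(\lambda^5)$ part of the symbol and leaves an $O(1)$ remainder coming solely from the extra term $-\omega t$ in the phase. Second: on the support of $\phi_\lambda$ one has $\phi^w_\lambda\equiv1$ (since $\phi\,\phi^w=\phi$), hence $u_{low}(0,\cdot)\px^3u_{hi}=-\omega\lambda^{-3}\px^3u_{hi}$ exactly; as $\px^3$ contributes a factor $(i\lambda)^3$ on $\cos\theta$, the leading part of this term is an $O(1)$ multiple of $\lambda^{-\frac{4+\delta}{2}-s}\phi_\lambda\sin\theta$, and the time shift in the phase of $u_{hi}$ was chosen (namely $-\omega t$, up to the constant multiplying $u\px^3u$ in the equation) precisely so that this is the negative of the $O(1)$ remainder from the first step, so the two cancel. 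Third: the difference $u_{low}(t,\cdot)-u_{low}(0,\cdot)$ is controlled by \eqref{low3}, and $\|u_{low}(t)-u_{low}(0)\|_{L^2}\,\|\px^3u_{hi}\|_{L^\infty}\lesssim\lambda^{-15-3\delta}\lambda^{\frac{2-\delta}{2}-s}$ is negligible. After these cancellations the survivors are exactly the pieces in which some derivative has hit $\phi_\lambda$; each gains a factor $\lambda^{-\delta}$ over the amplitude-stripped leading order (an extra derivative on $\phi_\lambda$ costs $\lambda^{-(4+\delta)}$ against the $\lambda$ it would cost on $\cos\theta$, with five derivatives available), hence contributes $\lesssim\lambda^{-\frac{4+\delta}{2}-s-\delta}$ in $L^\infty$ on a set of measure $\sim\lambda^{4+\delta}$, i.e. $\lesssim\lambda^{-s-\delta}$ in $L^2$. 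Collecting the three groups yields \eqref{F-L2}.

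Finally, \eqref{F-Hs} follows by rerunning the same estimates with $J^\sigma$ inserted: every term of $F$ is a slowly varying bump multiplied by $\cos\theta$, $\sin\theta$, or $\cos\theta\sin\theta$, hence spectrally concentrated at frequency $\sim\lambda$ (or $\sim2\lambda$), so $J^\sigma$ acts as multiplication by $O(\lambda^\sigma)$ up to harmless lower-order terms from differentiating the bump, converting the bounds of \eqref{F-L2} into those of \eqref{F-Hs}.
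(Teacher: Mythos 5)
Your decomposition and estimates track the paper's own proof closely: you cancel the pure $u_{low}$ block using that $u_{low}$ is an exact solution, bound the high--high self-interactions by $\lambda^{\frac{2-\delta}{2}-2s}$, dismiss the mixed terms with a derivative on $u_{low}$ via Lemma~\ref{lowbound}, identify the key cancellation between $(\partial_t+\px^5)u_{hi}$ and $u_{low}\px^3u_{hi}$ (tuned by the $-\omega t$ phase shift, with the residual $u_{low}(t)-u_{low}(0)$ controlled by \eqref{low3}), estimate the $[\px^5,\phi_\lambda]$ commutator leftovers by $\lambda^{-s-\delta}$, and obtain \eqref{F-Hs} from the spectral localization of each piece near frequency $\lambda$. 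This is exactly the paper's $F_1,\dots,F_6$ split in slightly different packaging, with the same constants and the same dominant terms, so the proof is essentially identical to the one in the text.
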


\begin{proof}
We decompose $F$ as follows:
\begin{align*}
F & =: F_1 + F_2 + F_3 + F_4 + F_5 + F_6 \\
F_1 & = (\partial_t + \partial_x^5)u_{low} +
c_1\partial_xu_{low}\partial_x^2u_{low} + c_2 u_{low}\partial_x^3
u_{low} \\
F_2 & = u_{hi}\partial_xu_{hi} + \partial_xu_{hi} \partial_x^2 u_{hi} \\
F_3 & = - \lambda^{-\frac{4+\delta}{2}-s}[\partial_x^5,
\phi_\lambda] \cos (\lambda x +
 \beta) \\
F_4 & = -\lambda^{-\frac{4+\delta}{2}-s}\phi_\lambda(\partial_t +
\partial_x^5 + u_{low} \partial_x^3 ) \cos (\lambda x +
 \beta) \\
F_5 & = -\lambda^{-\frac{4+\delta}{2}-s}\Bigl(
3\partial_x\phi_\lambda
\partial_x^2 \cos (\lambda x +
 \beta) + 3\partial_x^2\phi_\lambda\partial_x \cos
\Phi + \cos (\lambda x +
 \beta) \partial_x^3 \phi_\lambda \Bigr) \\
F_6 & = c_2 u_{hi}\partial_x^3 u_{low} + c_1
\partial_xu_{hi}\partial_x^2u_{low} + c_1\partial_x^2u_{hi}\partial_x
u_{low}
\end{align*}
where $ \beta = -\lambda^5t - \omega t $.\\
By definition of $\phi_\lambda(x)$ we have
$$\px^k\phi_\lambda(x)= \lambda^{-k(4+\delta)} (\px^k\phi)_\lambda(x).  $$
$ F_1 = 0 $ since $u_{low} $ is an exact solution to the
equation. In the following estimates of $F_2, F_3, F_5 $ and $ F_6$ ,
the worst term occurs when the most derivatives act on $\cos(\lambda x +
 \beta) $. So, we have the following estimates.
\begin{align*}
 F_2(x) & = \lambda^{-(4+\delta)-2s}
 \Bigl\{\phi_\lambda(x)\cos(\lambda x + \beta)\,\partial_x^3\Bigl(\phi_\lambda(x)\cos(\lambda x +
 \beta)\Bigr) \\  &  + \partial_x\Bigl(\phi_\lambda(x)\cos(\lambda x +
 \beta)\Bigr)\,\partial_x^2\Bigl(\phi_\lambda(x)\cos(\lambda x + \beta)\Bigr)\Bigr\}, \\
 \|F_2 \|_{L^2_x} & = O\Big(\lambda^{-(4+\delta)-2s + 3 +
 \frac{4+\delta}{2}}\Big)\\
 & = O(\lambda^{1-\delta/2 -2s}).
\end{align*}
\begin{align*}
F_3(x) & = - \lambda^{-\frac{4+\delta}{2}-s}[\partial_x^5,
\phi_\lambda] \cos (\lambda x +
 \beta) \\  & =
 \lambda^{-\frac{4+\delta}{2}-s}\partial_x\phi_\lambda(x)\,\partial_x^4\Bigl(\cos
 (\lambda x + \beta)\Bigr) + \text{Better terms}, \\
 \|F_3 \|_{L^2_x} & = O\Bigl(\lambda^{-\frac{4+\delta}{2}-s-4-\delta + 4 + \frac{4+\delta}{2}}
 \Bigr)\\ & = O\Bigl(\lambda^{-\delta -s}  \Bigr).
\end{align*}

\begin{align*}
F_5 & = -\lambda^{-\frac{4+\delta}{2}-s}\Bigl(
3\partial_x\phi_\lambda
\partial_x^2 \cos (\lambda x +
 \beta) + 3\partial_x^2\phi_\lambda\partial_x \cos
\Phi + \cos (\lambda x +
 \beta) \partial_x^3 \phi_\lambda \Bigr) \\
& =-\lambda^{-\frac{4+\delta}{2}-s}\Bigl[
\partial_x\phi_\lambda\,\partial_x^2\Bigl(\cos (\lambda x +
\beta)\Bigr) + \text{Better terms} \Bigr], \\
  \|F_5 \|_{L^2_x} & = O\Bigl(\lambda^{-\frac{4+\delta}{2}-s -4-\delta +2 + \frac{4+\delta}{2}}
  \Bigr)\\ & = O\Bigl(\lambda^{-2-\delta -s}\Bigr).
\end{align*}

\begin{align*}
F_6 & = c_2 u_{hi}\partial_x^3 u_{low} + c_1
\partial_xu_{hi}\partial_x^2u_{low} + c_1\partial_x^2u_{hi}\partial_x
u_{low}\\ & = \phi_\lambda \partial_x^2\bigl(\cos(\lambda x + \beta)
\bigr)\,\partial_xu_{low} + \text{Better terms}, \\ 
\| F_6 \|_{L^2_x} & = O\Bigl(\lambda^{-\frac{4+\delta}{2}-s +2 -3-4-\delta
+ \frac{4+\delta}{2}} \Bigr) \\ & = O\Bigl(\lambda^{-s -5-\delta}
\Bigr).
\end{align*}
Now it remains to estimate $F_4$. For this we use \eqref{low3}.
$$
F_4 = -\lambda^{-\frac{4+\delta}{2}-s}\phi_\lambda(\partial_t +
\partial_x^5 + u_{low} \partial_x^3 ) \cos (\lambda x +
 \beta). $$
A direct computation yield
\begin{align*}  \bigl[\partial_t + \partial_x^5\bigr] \cos (\lambda x - \lambda^5 t -
\omega
t) &= \sin (\lambda x - \lambda^5 t - \omega t)\cdot \omega \\
\partial_x^3 \cos (\lambda x - \lambda^5 t - \omega
t) &= \lambda^3 \sin(\lambda x - \lambda^5 t - \omega t).
\end{align*}
We use the facts that $\phi_\lambda\,\phi^w_\lambda =
\phi_\lambda $ and $u_{low}(0,x) = - \lambda^3\,\omega
\phi^w_\lambda(x) $ to get
\begin{align*}
 F_4(x) & = -\lambda^{-\frac{4+\delta}{2}-s}\phi_\lambda(x)\bigl[
u_{low}(t,x)\lambda^3 - \omega \bigr] \sin (\lambda x -\lambda^5 t -
\omega t) \\
& = -\lambda^{-\frac{4+\delta}{2}-s}\phi_\lambda(x)\bigl[
u_{low}(t,x)\lambda^3 - u_{low}(0,x)\lambda^3 \bigr] \sin (\lambda x
-\lambda^5 t - \omega t) \\
\|\,F_4\,\|_{L^2_x} & \lesssim \lambda^{-\frac{4+\delta}{2}-s+3}\lambda^{-15-3\delta}\lambda^{\frac{4+\delta}{2}}
\\ & = O\Bigl(\lambda^{-12-s-3\delta}\Bigr)
\end{align*}
where we used $\eqref{low3}$. \\
Next, we analyze $ \|D^\sigma F\|_{L^2_x} $. For any Schwartz function $f$, we have
$$D^\sigma f(\lambda^p x) = \lambda^{p\sigma}(D^\sigma f)(\lambda^p x). $$
Each $ F_i $ for $i=1,\cdots,6 $ is a product of a low frequency function and a high frequency function.\\
Let $f_\lambda(x)=f(\frac{x}{\lambda^a})$ and $g_\lambda(x)= g(\lambda^bx)$ for some $a,b >0$.
Then, in general by \eqref{product}
\begin{align*}
 \|D^\sigma (f_\lambda g_\lambda ) \|_{L^2} & \lesssim  \|f_\lambda \cdot D^\sigma g_\lambda\|_{L^2} + \|D^\sigma f_\lambda \cdot g_\lambda\|_{L^2} + \|D^\sigma f_\lambda\|_{L^2}\| g_\lambda\|_{L^\infty} \\
 & \lesssim \lambda^{\sigma b} \Big(\|\|f_\lambda (D^\sigma g)_\lambda \|_{L^2} + \|(D^\sigma f)_\lambda\cdot g_\lambda\|_{L^2} +\|(D^\sigma f)_\lambda\|_{L^2}\| g_\lambda\|_{L^\infty} \Big).
\end{align*}
Thus, by checking above computations we have
$$ \|D^\sigma F_i\|_{L^2} \lesssim_\sigma \lambda^\sigma \|F_i\|_{L^2}  $$
which implies \eqref{F-Hs}.
\end{proof}

\subsection{Proof of Proposition~\ref{keyapp}}

We set the difference between the genuine solution and the approximate
solution
$$ w_{\omega, \lambda}:= u_{\omega, \lambda} - u_{ap} $$ where
$u_{\omega, \lambda} $ is the solution to $\eqref{fifth}$.\\
Our goal is to prove
$$ \|w_{\omega, \lambda} \|_{H^s} =
  o(1),  \qquad \qquad \text{ as  }  \lambda \rightarrow \infty.  $$
  First, we do the case $ s>\frac{5}{2} $, where the local well-posedness theory is available. From now we denote $ w:=w_{\omega,\lambda} $ and $u:=u_{\omega,\lambda}$ for simplicity.\\
From \eqref{fifth} we have an equation for $w$
\begin{equation}\label{w-eq-ill}
\partial_tw + \px^5 w + c_1( \px w\px^2u_{ap} + \px u\px^2w) + c_2(w\px^3u_{ap} + u\px^3w) + F = 0. 
\end{equation}
We find the $L^2$ persistence property first. Arguing as in \eqref{w-L2} (using a correctional term), we have
$$
\|w(t)\|^2_{L^2_x} \lesssim \Big(\|\px^3u\|_{L^\infty_x} + \|\px^3u_{ap}\|_{L^\infty_x} \Big)\|w(t)\|^2_{L^2} + \|F(t)\|_{L^2_x}\|w\|_{L^2_x}.
$$
Since $w(0)=0$, from \eqref{u-lwp}, \eqref{uap-lwp} and \eqref{F-L2}
\begin{equation}\label{w-L2-ill}
  \|w\|_{L^\infty_TL^2_x} \lesssim \|F\|_{L^1_TL^2_x} =O(\lambda^{-s-\beta})
\end{equation}
for $\beta= \min(\delta, -\frac{2-\delta}{2}+s) > 0 $. \\
For the $H^s$ persistence, we argue as in Section~\ref{lwpproof} and use the $L^2$ persistence.
From \eqref{second}, \eqref{w-final}
\begin{align}
\|J^sw\|_{L^\infty_TL^2_x} &\lesssim
\Big\{ \|w_0\|_{H^s} + \|w\|_{L^\infty_TL^2_x}\|D^{s+3}u_{ap}\|_{L^1_TL^\infty_x}  \nonumber \\
\label{w-Hs-ill}  &+ \big(\|F\|_{L^1_TH^s_x} + \|\px^3u\|_{L^1_TL^\infty_x} + \|\px^3u_{ap}\|_{L^1_TL^\infty_x} \big) \|J^sw\|_{L^\infty_TL^2_x}   \Big\} \exp(C\|u_0\|_{H^s}).
\end{align}
A direct computation yields
\begin{equation}\label{w-L2-ill2}
 \|D^{s+3}u_{ap}(t)\|_{L^\infty_x}  \lesssim \lambda^{1-\delta/2} \leq \lambda^s.
\end{equation}
Using $w(0)=0$, \eqref{u-lwp}, \eqref{uap-lwp}, \eqref{F-Hs}, \eqref{w-L2-ill}, \eqref{w-L2-ill2}, in \eqref{w-Hs-ill} we conclude
$$  \|J^sw\|_{L^\infty_TL^2_x} \lesssim o(1) $$
as $\lambda \rightarrow \infty $. \\

Next, in the case $0< s \leq \frac{5}{2}$ we use interpolation between regularity exponents. To get the $L^2$ persistence of $w$ we use the $L^2$ conservation law. From \eqref{w-eq-ill} and $ u=w+u_{ap} $ we obtain
$$\partial_tw + \px^5 w + c_1( \px w\px^2u_{ap} + \px w\px^2w + \px u_{ap}\px^2w) + c_2(w\px^3u_{ap} + w\px^3w + u_{ap}\px^3w) + F = 0. $$
Note that the $L^2$ conservation law yields $ \int c_1\px w\px^2w w + c_2 w\px^3w w =0 $. Again, arguing as in \eqref{w-L2} we have
$$\frac{d}{dt}\|w(t)\|_{L^2_x} \lesssim \|\px^3u_{ap}(t)\|_{L^\infty_x}\|w(t)\|_{L^2_x} + \|F(t)\|_{L^2_x}. $$  
From \eqref{F-L2} and \eqref{uap-lwp}
\begin{equation}\label{w-L2-ill2}
 \|w(t)\|_{L^2_x} \lesssim \lambda^{-s-\beta}. 
\end{equation} 
On the other hand, from the $H^3 $ conservation law, we have
\begin{equation*}
  \|u(t)\|_{H^3_x} \lesssim \|u(0)\|_{H^3} \lesssim \lambda^{3-s}.
\end{equation*}
By a direct computation we also have
$$ \|u_{ap}(t)\|_{H^s_x} \lesssim \lambda^{3-s}. $$
Combining these estimates together we obtain
\begin{equation}\label{H3-conservation}
  \|w(t)\|_{H^3_x} \lesssim  \lambda^{3-s}.
\end{equation}
An interpolation between \eqref{H3-conservation} and \eqref{w-L2-ill2} shows
\begin{align*}
\|w(t)\|_{H^s_x} &\lesssim \|w(t)\|_{H^3_x}^{\frac{s}{3}} \|w(t)\|^{\frac{3-s}{3}}_{L^2_x}\\
   & \lesssim \lambda^{\frac{(3-s)s}{3}}\cdot \lambda^{ \frac{3-s}{3}(-s-\beta)} \\
   & = \lambda^{-\beta\frac{3-s}{3}}
\end{align*}
which completes the proof.


\begin{thebibliography}{10}
\bibitem{benjamin}
T.B. Benjamin, \emph{Impulse, flow force and variational principles}, IMA J. Appl. Math. \textbf{32} (1984), no. 1-3, 3--68.

\bibitem{benney}
D.J. Benney, \emph{A general theory for interactions between short and long waves}, IMA J. Appl. Math. \textbf{56} (1977), 81--94.

\bibitem{BS}
J.L. Bona, R.S. Smith, \emph{The initial value problem for the Korteweg-de Vries equation}, Proc. Roy. Soc. London Ser. A \textbf{278} (1978), 555--601

\bibitem{coifman-meyer}
R.R. Coifman, Y. Meyer, \emph{Au del\'a des op\'erateurs pseudo-diff\'erentiels. (French) [Beyond pseudodifferential operators] With an English summary. Ast\'erisque}, \textbf{57} Soci\'et\'e Math\'ematique de France, Paris, 1978.


\bibitem{gardner}
C.S. Garder, J.M. Greene, M.D. Kruskal, and R.M. Miura, \emph{The Korteweg-de Vries equation and generations VI. Method for exact solutions}, Comm. Pure. Appl. Math. \textbf{27} (1974), 97--133.

\bibitem{kato-ponce}
 T. Kato, G. Ponce, \emph{Commutator estimates and the Euler and Navier-Stokes equations}, Comm. Pure Appl. Math. \textbf{41} (1988), no. 7, 891--907.

\bibitem{kenig-koenig}
C.E. Kenig, K.D. Koenig, \emph{On the local well-posedness of the Benjamin-Ono and modified Benjamin-Ono equations}, Math. Res. Lett. \textbf{10} (2003), no. 5-6, 879--895.

\bibitem{KPV91}
C.E. Kenig, G. Ponce, and L. Vega, \emph{Well-posedness of the initial value problem for the Korteweg-de Vries equation}, J. Amer. Math. Soc. \textbf{4} (1991), no. 2, 323--347.

\bibitem{KPV93}
C. E. Kenig, G. Ponce, and L. Vega, \emph{Well-posedness and scattering results for the generalized Korteweg-de Vries equation via the contraction principle}, Comm. Pure Appl. Math. \textbf{46} (1993), 527--620.

\bibitem{KPV94}
C. E. Kenig, G. Ponce, and L. Vega, \emph{On the hierarchy of the generalized KdV equations}, Singular limits of dispersive waves (Lyon, 1991), 347--356, NATO Adv. Sci. Inst. Ser. B Phys., 320, Plenum, New York, 1994.

\bibitem{KPV94-1}
C. E. Kenig, G. Ponce, and L. Vega, \emph{Higher-order nonlinear dispersive equations}, Proc. Amer. Math. Soc. \textbf{122} (1994), no. 1, 157--166.

\bibitem{koch-tzvetkov1}
H. Koch, N. Tzvetkov \emph{On the local well-posedness of the Benjamin-Ono equation in $H\sp s({\Bbb R})$}, Int. Math. Res. Not. \textbf{26} (2003), 1449--1464.

\bibitem{koch-tzvetkov2}
H. Koch, N. Tzvetkov, \emph{Nonlinear wave interactions for the Benjamin-Ono equation}, Int. Math. Res. Not. \textbf{30} (2005), 1833--1847.

\bibitem{koch-tzvetkov3}
H. Koch, N. Tzvetkov, \emph{On finite energy solutions of the KP-I equation}, preprint

\bibitem{lax}
P.D. Lax, \emph{Integrals of nonlinear equations of evolution and solitary waves}, Comm. Pure. App. Math. \textbf{21} (1968), 467--490.

\bibitem{MST}
L. Molinet, J. C. Saut, and N. Tzvetkov, \emph{Ill-posedness issues for the Benjamin-Ono and related equations}, SIAM J. Math. Anal. \textbf{33} (2001), 982--988.

\bibitem{MST2}
L. Molinet, J. C. Saut, and N. Tzvetkov, \emph{Well-posed and ill-posedness results for the Kadomtsev-Petviashvili-I equation}, Duke Math. J. \textbf{115} (2002), no. 2, 353--384.

\bibitem{olver}
P.L. Olver, \emph{Hamiltonian and non-Hamiltonian models for water waves}, in ''Lecture notes in Physics'' No.195, 273--290, Springer-Verlag, New York, 1984.

\bibitem{ponce94}
G. Ponce, \emph{Lax pairs and higher order models for water waves}, J. Differential Equations \textbf{102} (1993), no. 2, 360--381.


\bibitem{ponce91}
G. Ponce, \emph{On the global well-posedness of the Benjamin-Ono equation}, Differential Integral Equations \textbf{4} (1991), no. 3, 527--542.

\end{thebibliography}
\end{document}